\colorlet{myGreen}{green!50!black}
\colorlet{myLightgreen}{green}
\colorlet{myRed}{red!90!black}
\definecolor{myBlue}{rgb}{0.25, 0.0, 1.0}
\definecolor{myLightBlue}{rgb}{0.39, 0.58, 0.93}
\colorlet{myViolet}{myBlue!55!myRed}
\definecolor{myOrange}{rgb}{1.0, 0.66, 0.07}
\definecolor{CornflowerBlue}{rgb}{0.39, 0.58, 0.93}
\definecolor{DarkGoldenrod}{rgb}{0.72, 0.53, 0.04}
\definecolor{BritishRacingGreen}{rgb}{0.0, 0.26, 0.15}
\definecolor{DarkMagenta}{rgb}{0.55, 0.0, 0.55}
\definecolor{AO}{rgb}{0.0, 0.5, 0.0}
\definecolor{BostonUniversityRed}{rgb}{0.8, 0.0, 0.0}
\definecolor{myRed}{rgb}{0.8, 0.0, 0.0}
\definecolor{DarkMidnightBlue}{rgb}{0.0, 0.2, 0.4}
\definecolor{DarkTangerine}{rgb}{1.0, 0.66, 0.07}
\definecolor{AppleGreen}{rgb}{0.55, 0.71, 0.0}
\definecolor{BrightUbe}{rgb}{0.82, 0.62, 0.91}
\definecolor{Amethyst}{rgb}{0.6, 0.4, 0.8}
\definecolor{DarkGray}{rgb}{0.52, 0.52, 0.51}
\definecolor{Gray}{rgb}{0.66, 0.66, 0.66}
\definecolor{BananaYellow}{rgb}{1.0, 0.88, 0.21}
\definecolor{Amber}{rgb}{1.0, 0.75, 0.0}
\definecolor{LightGray}{rgb}{0.83, 0.83, 0.83}
\definecolor{PrincetonOrange}{rgb}{1.0, 0.56, 0.0}
\definecolor{DeepCarrotOrange}{rgb}{0.91, 0.41, 0.17}
\definecolor{MidnightBlue}{rgb}{0.1, 0.1, 0.44}
\definecolor{HotMagenta}{rgb}{1.0, 0.11, 0.81}
\definecolor{Iceberg}{rgb}{0.44, 0.65, 0.82}
\definecolor{DarkCyan}{rgb}{0.0, 0.55, 0.55}
\setlist[itemize]{topsep=0pt,partopsep=0pt,itemsep=0pt,parsep=0pt}
\setlist[itemize,1]{label={\small\textbullet}}
\setlist[itemize,2]{label={\tiny\textbullet}}
\setlist[itemize,3]{label=$\cdot$}
\setlist[enumerate]{topsep=0pt,partopsep=0pt,itemsep=0pt,parsep=0pt}
\setlist[enumerate,1]{label=\roman*)}
\setlist[enumerate,2]{label=\alph*)}
\setlist[enumerate,3]{label=\arabic*)}
\theoremstyle{definition}
\newtheorem{environment}{Environment}[section]
\newtheorem{lemma}[environment]{Lemma}
\crefname{lemma}{lemma}{lemmata}
\newtheorem*{lemma*}{Lemma}
\crefname{lemma*}{lemma}{lemmata}
\crefname{proposition}{proposition}{propositions}
\newtheorem{corollary}[environment]{Corollary}
\crefname{corollary}{corollary}{corollaries}
\newtheorem{theorem}[environment]{Theorem}
\crefname{theorem}{theorem}{Theorems}
\newtheorem{conjecture}[environment]{Conjecture}
\crefname{conjecture}{conjecture}{Conjectures}
\newtheorem*{hypothesis*}{Hypothesis}
\crefname{hypothesis*}{conjecture}{Conjectures}
\newtheorem{observation}[environment]{Observation}
\crefname{observation}{observation}{Observations}
\crefname{example}{example}{examples}
\crefname{remark}{remark}{remarks}
\crefname{figure}{figure}{figures}
\crefname{equation}{equation}{Equations}
\crefname{chapter}{chapter}{chapters}
\crefname{section}{section}{sections}
\crefname{algorithm}{algorithm}{algorithms}
\crefname{notation}{notation}{notations}
\newtheorem{question}[environment]{Question}
\crefname{question}{question}{questions}
\crefname{problem}{problem}{problem}
\crefname{claim}{claim}{claims}
\crefname{definition}{definition}{definitions}
\tikzset{
	position/.style args={#1:#2 from #3}{
		at=($(#3)+(#1:#2)$)
	}
}
\tikzset{
  v:main/.style = {draw, circle, scale=0.8, thick,fill=black,inner sep=0.7mm},
    v:maingray/.style = {draw, circle, scale=0.65, thick,color=gray,fill=gray,inner sep=0.7mm},
  v:mainempty/.style = {draw, circle, scale=0.8, thick,fill=white,inner sep=0.7mm},
  v:mainellipse/.style = {draw, ellipse, scale=0.8, thick,fill=white,inner sep=0.7mm},
    v:mainbox/.style = {draw, scale=0.8, thick,fill=white,inner sep=0.7mm},
    v:mainred/.style = {draw, circle, scale=0.65, thick,fill=red,inner sep=0.7mm},
        v:maingreen/.style = {draw, circle, scale=0.65, thick,fill=myGreen,inner sep=0.7mm},
                v:mainemptygreen/.style = {draw, circle, scale=0.65, thick,color=myGreen,fill=white,inner sep=0.7mm},
  v:mainemptygray/.style = {draw, circle, scale=0.65, thick,color=gray,fill=white,inner sep=0.7mm},
  v:tinytree/.style = {draw, circle, scale=0.03, thick,fill=black},
  v:middle/.style = {draw, circle, scale=0.3,thick,fill=Gray,color=Gray,inner sep=1mm},
  v:border/.style = {draw, circle, scale=0.75, thick,minimum size=10.5mm},
  v:mainfull/.style = {draw, circle, scale=1, thick,fill},
  v:ghost/.style = {inner sep=0pt,scale=1},
  v:marked/.style = {circle, scale=1.3, fill=DarkGoldenrod,opacity=0.4},
  v:tree/.style = {draw, circle, scale=0.45, thick,fill=black},
  >={latex},
  e:shiftedright/.style = {decoration={sl, raise=0.65pt},  decorate},
  e:shiftedleft/.style  = {decoration={sl, raise=-0.65pt}, decorate},
  e:marker/.style = {line width=8.5pt,line cap=round,opacity=0.35,color=DarkGoldenrod},
  e:colored/.style = {line width=1.8pt,color=BostonUniversityRed,cap=round,opacity=0.8},
  e:coloredthin/.style = {line width=1.6pt,opacity=1},
  e:coloredborder/.style = {line width=3.4pt},
  e:main/.style = {line width=1pt},
  e:thick/.style = {line width=2pt},
  e:mainplus/.style = {line width=1.15pt},
  e:mainthin/.style = {line width=0.6pt},
  e:extra/.style = {line width=1.3pt,color=LavenderGray},
  e:matching1/.style = {line width=2.2pt, color=myGreen, cap=round},
  e:matching2/.style = {line width=1.7pt, color=myRed, dashed, cap=round},
  e:matching2border/.style = {line width=0.35pt, color=white, dashed, cap=round, double=myRed, double distance=1.7pt},
  e:positive/.style = {line width=1.35pt},
  e:negative/.style = {line width=1.35pt,densely dotted},
}
\DeclareMathOperator{\atd}{\alpha\mbox{-}\mathsf{td}}
\title{Excluding an induced wheel minor in graphs\\ without large induced stars\thanks{An extended abstract of this work was accepted for the proceedings of the 51st International Workshop on Graph-Theoretic Concepts in Computer Science (WG 2025).}}
\date{}
\DeclareRobustCommand{\authorthing}{
	\begin{center}
	    Mujin Choi\thanks{Supported by the Institute for Basic Science (IBS-R029-C1)} \\
        {\small Department of Mathematical Sciences, KAIST, Daejeon, Korea}\\
		{\small Discrete Mathematics Group, Institute for Basic Science, Daejeon, Korea.}\\
		\href{mailto:mujinchoi@kaist.ac.kr}{mujinchoi@kaist.ac.kr}\\

        \bigskip
	    Claire Hilaire\thanks{Supported in part by the Slovenian Research and Innovation Agency (J1-4008 and N1-0370)}\\
		{\small FAMNIT, University of Primorska, Koper, Slovenia.\\
        \href{mailto:claire.hilaire@famnit.upr.si}{claire.hilaire@famnit.upr.si}}\\

        \bigskip
	    Martin Milanič\thanks{Supported in part by the Slovenian Research and Innovation Agency (I0-0035, research program P1-0285 and research projects J1-3003, J1-4008, J1-4084, J1-60012, and N1-0370) and by the research program CogniCom (0013103) at the University of Primorska.}\\
		{\small FAMNIT and IAM, University of Primorska, Koper, Slovenia.\\
	    \href{mailto:martin.milanic@upr.si}{martin.milanic@upr.si}}\\
	    
		\bigskip
	    Sebastian Wiederrecht\thanks{Sebastian Wiederrecht's research was partially supported by the Institute for Basic Science (IBS-R029-C1).} \\
		{\small School of Computing, KAIST, Daejeon, Korea.\\
		\href{mailto:wiederrecht@kaist.ac.kr}{wiederrecht@kaist.ac.kr}}
\end{center}}
\author{\authorthing}
\begin{document}
\maketitle
\thispagestyle{empty}

\begin{abstract}
We study a conjecture due to Dallard, Krnc, Kwon, Milanič, Munaro, Štorgel, and Wiederrecht stating that for any positive integer $d$ and any planar graph $H$, the class of all $K_{1,d}$-free graphs without $H$ as an induced minor has bounded tree-independence number.
A $k$-wheel is the graph obtained from a cycle of length $k$ by adding a vertex adjacent to all vertices of the cycle.
We show that the conjecture of Dallard et al.~is true when $H$ is a $k$-wheel for any $k\geq 3$.
Our proof uses a generalization of the concept of brambles to tree-independence number.
As a consequence of our main result, several important $\mathsf{NP}$-hard problems such as \textsc{Maximum Independent Set} are tractable on $K_{1,d}$-free graphs without large induced wheel minors.
Moreover, for fixed $d$ and $k$, we provide a polynomial-time algorithm that, given a $K_{1,d}$-free graph $G$ as input, finds an induced minor model of a $k$-wheel in $G$ if one exists.
\end{abstract}

\section{Introduction}
Treewidth is one of the most widely studied graph parameters based on so called \textsl{tree-decompositions}\footnote{We postpone the definition of tree-decompositions and treewidth to \Cref{sec:Prelim}.}.
The popularity of treewidth in theoretical computer science is due to its powerful properties for the design of (parameterized) algorithms.
As a prominent example of these properties, Courcelle's theorem (see, e.g., \cite{COURCELLE199012}) states that any decision problem expressible in \textsl{counting monadic second-order logic}, written $\mathsf{CMSO}_2$ (see \cite{courcelle2012graph} for the definition), can be solved in linear time when the input is restricted to graphs whose treewidth is bounded by some constant.
On the structural graph theory side, treewidth plays a central role in the Graph Minor Theory by Robertson and Seymour (see, e.g., \cite{GraphMinor4,GraphMinor5}).

Unfortunately, graph classes of bounded treewidth are rather restricted.
This is because treewidth is a \textsl{minor-monotone} parameter and thus graphs of bounded treewidth are forced to be sparse: for every graph $G$ of treewidth at most $k$ we have that $|E(G)|\le k|V(G)|$.
Hence, bounding the treewidth naturally fails to consider several important but very simple graph classes.
There have been various attempts to deal with this problem by defining new width parameters that can be used to understand dense graph classes more thoroughly. 
Examples of such parameters are clique-width \cite{COURCELLE200077} (and the related rank-width \cite{OUM2006514}) but also more recent parameters such as twin-width \cite{BonnetTwidth1}.
Another possible approach to the design of new width parameters is to change the way the width of a tree-decomposition is evaluated to account for dense but still naturally tree-like graphs like chordal graphs.
In this context, Yolov \cite{YolovMinorMatchingHypertreeWidth}, and independently, Dallard, Milanič, and Štorgel \cite{dallard2021treewidthversuscliquenumber1} defined \textsl{tree-independence number}, denoted\footnote{The authors of \cite{dallard2021treewidthversuscliquenumber1} denoted the tree-independence number by $\mathsf{tree}$-$\alpha$.} by $\alpha$-$\mathsf{tw}$.
This new parameter measures the independence number of each bag in a tree-decomposition instead of its size.
While graphs of treewidth at most~$1$ are exactly the forests, and thus treewidth can be considered to measure how close a given graph is to being a forest, graphs of tree-independence number at most~$1$ are precisely the chordal graphs; hence, tree-independence number can be understood as a measure for how close a given graph is to being chordal.

Tree-independence number has gotten a lot of attention recently, for both structural and algorithmic reasons.
It appears to be a natural parameter for the study of \textsl{induced-minor-closed} graph classes~\cite{dallard2024treewidthversuscliquenumber2,hilaire2024treewidth,chudnovsky2024tree}. 
We say that a graph $H$ is an \emph{induced minor} of a graph $G$ if $H$ can be obtained from $G$ by vertex deletions and edge contractions.
The induced minor relation has been under consideration for quite some time for both its structural properties and algorithmic implications  (see~\cite{Matousek1988Polynomial,Fellows1995Complexity,Blasiok2019Induced,Korhonen2024Induced,hilaire2024treewidth}).
In particular, tree-independence number is monotone under the induced minor relation (see~\cite{dallard2024treewidthversuscliquenumber2}).

From the algorithmic point of view, in \cite{YolovMinorMatchingHypertreeWidth} (see also~\cite{dallard2024treewidthversuscliquenumber2}), the author showed that the well-known \textsc{Maximum Weight Independent Set} (\textsc{MWIS}) problem can be solved in polynomial time in graph classes with bounded tree-independence number.
In fact, in \cite{dallard2024computingtreedecompositionssmall}, Dallard, Fomin, Golovach, Korhonen, and Milanič showed that tree-independence number is the most general parameter among similar tree-decomposition based parameters that implies tractability of MWIS (unless $\mathsf{P} = \mathsf{NP}$).
Bounded tree-independence number implies polynomial-time solvability of a number of algorithmic problems other than \textsc{MWIS} (see~\cite{dallard2024treewidthversuscliquenumber2,YolovMinorMatchingHypertreeWidth,Lima2024Tree}).
In particular, Lima, Milani{\v{c}}, Mur{\v{s}}i{\v{c}}, Okrasa, Rz{\k{a}}{\.z}ewski, and {\v{S}}torgel~\cite{Lima2024Tree} proved an \textsl{algorithmic metatheorem}, giving a generic algorithm that finds a maximum-weight induced subgraph of bounded clique number satisfying a fixed property definable in $\mathsf{CMSO}_2$.
As we will see in \Cref{sec:inducedMinorChecking}, this metatheorem allows to efficiently solve \textsc{$H$-Induced Minor Containment} in graphs of bounded tree-independence number (a similar result was recently obtained by Bousquet, Dallard, Dumas, Hilaire, Milanič, Perez, and Trotignon~\cite{Bousquet2025Induced}).
Note that \hbox{\textsc{$H$-Induced Minor Containment}} is known to be $\mathsf{NP}$-complete for some graphs $H$ (see~\cite{Fellows1995Complexity}), even if $H$ is a tree (see~\cite{Korhonen2024Induced}).


In this context, and from a more structural point of view, it is interesting to characterize graph classes with bounded tree-independence number, in particular in terms of excluded induced minors.
Recently, Dallard et al.~\cite{dallard2024treewidthversuscliquenumber4}
conjectured that for every positive integer $d$ and every planar graph $H$, every graph that excludes $K_{1,d}$ as an induced subgraph and $H$ as an induced minor has bounded tree-independence number.
Here, $K_{1,d}$ denotes the \emph{star} with $d$ leaves, i.e.\@ the complete bipartite graph where one side has exactly one vertex and the other side has $d$ vertices.
Alternatively, $K_{1,d}$-free\footnote{Given a graph $H$, a graph $G$ is \emph{$H$-free} if it does not contain $H$ as an induced subgraph.} graphs are exactly those where the maximum size of an independent set in the neighbourhood of any vertex is at most $d-1$.
The conjecture of Dallard et al.~can be seen as an induced minor version of the celebrated Grid Theorem by Robertson and Seymour \cite{GraphMinor5}.
Moreover, the conjecture generalises a related theorem by Korhonen \cite{Korhonen2023InducedGrid}, stating that for every positive integer $d$, every graph with maximum degree at most $d$ that excludes some fixed planar graph as an induced minor has bounded treewidth.

\subsection{Our results}

We prove a first result that shows that for every integer $k\ge 4$, there exist a $3$-connected planar graph $H$ on $k$ vertices such that excluding $H$ as an induced minor in $K_{1,d}$-free graphs yields bounded tree-independence number.

\begin{figure}
    \centering
    \begin{tikzpicture}[scale=0.37]
    \pgfdeclarelayer{background}
    \pgfdeclarelayer{foreground}
    \pgfsetlayers{background,main,foreground}
    \begin{pgfonlayer}{background}
        \pgftext{\includegraphics[width=10cm]{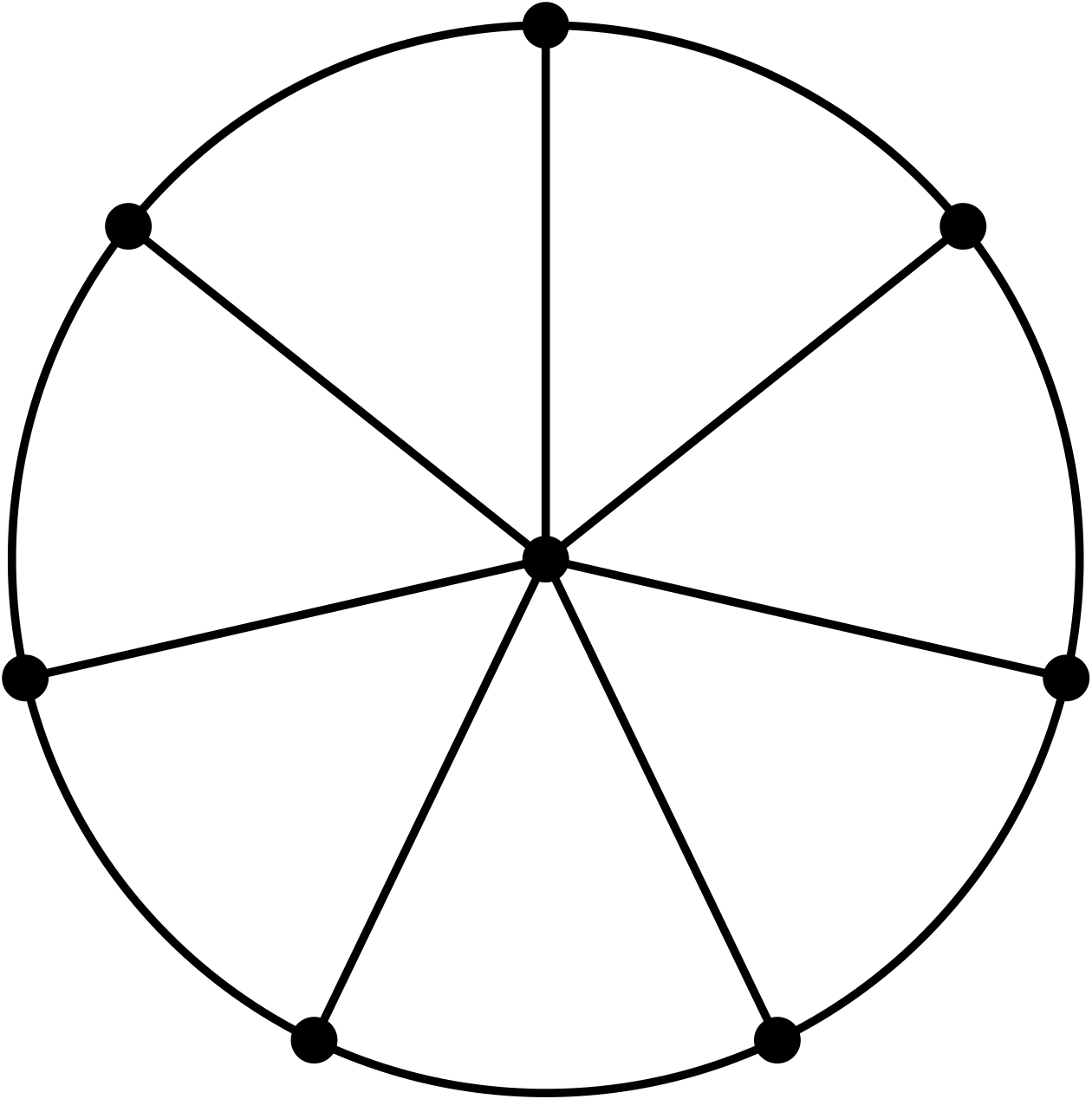}} at (C.center);
    \end{pgfonlayer}{background}
    \begin{pgfonlayer}{main}
    \end{pgfonlayer}{main}
    \begin{pgfonlayer}{foreground}
    \end{pgfonlayer}{foreground}
\end{tikzpicture}
    \caption{The $7$-wheel $W_7$}
    \label{fig:wheel}
\end{figure}

Let $\ell\geq 3$ be an integer.
The \emph{$\ell$-wheel}, denoted by $W_{\ell}$, is the graph obtained from the cycle on $\ell$ vertices by introducing one additional vertex adjacent to all vertices of the cycle (see \cref{fig:wheel} for an example).
Before this work, it was known that graphs excluding $W_{3}$ or $W_4$ as an induced minor have bounded tree-independence number \cite{dallard2024treewidth}.
In the same paper it was proven that for any $\ell\geq 5$, excluding $W_{\ell}$ alone as an induced minor is not enough to bound the tree-independence number.

The main result of this paper is, that in $K_{1,d}$-free graphs, excluding $W_{\ell}$ suffices to bound the tree-independence number for all integers $d\ge 1$ and $\ell\geq 3$.

\begin{restatable}{theorem}{thmforbiddingthewheel}\label{thm:forbiddingthewheel}
    There exists a function $f_{\ref{thm:forbiddingthewheel}}(d,\ell )\in\mathcal{O}\big(d\ell (\ell^{10}+2^{(\max\{ d+2,\ell \})^5})\big)$ such that for all integers $d\ge 1$ and $\ell\geq 3$ and every $K_{1,d}$-free graph $G$, either $G$ contains $W_{\ell}$ as an induced minor, or $\alpha\text{-}\mathsf{tw}(G)\leq f_{\ref{thm:forbiddingthewheel}}(d,\ell)$.
\end{restatable}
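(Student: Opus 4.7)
The plan is to reduce the theorem to a bramble-type duality for tree-independence number, and then extract an induced minor model of $W_\ell$ from a sufficiently rich bramble using the $K_{1,d}$-free hypothesis. Concretely, I would first introduce a notion of \emph{independence bramble} (or $\alpha$-bramble): a collection $\mathcal{B}$ of subsets of $V(G)$ each inducing a connected subgraph, such that any two elements of $\mathcal{B}$ touch (share a vertex or are joined by an edge), and whose \emph{$\alpha$-order} is the minimum, over all sets $X\subseteq V(G)$ hitting every element of $\mathcal{B}$, of the independence number $\alpha(G[X])$. The easy direction of the duality is classical: every bag of a tree-decomposition hits every element of an $\alpha$-bramble, so if $G$ admits such a bramble of $\alpha$-order larger than $k$, then $\alpha\text{-}\mathsf{tw}(G) > k$. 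The contrapositive is what we need, and I would either invoke or prove an approximate converse of the form ``if $\alpha\text{-}\mathsf{tw}(G) > g(k)$ then $G$ contains an $\alpha$-bramble of $\alpha$-order at least $k$'', which follows by a standard game/separator argument analogous to the Seymour--Thomas duality for treewidth.

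The heart of the proof is then to go from an $\alpha$-bramble of large order in a $K_{1,d}$-free graph to an induced $W_\ell$-minor model. I would proceed in three stages. \emph{Stage 1 (hub selection).} Choose a small connected subgraph $C$ (a single bramble element or the union of a few) that will serve as the central vertex of the wheel after contraction; the goal is to ensure that a large sub-collection $\mathcal{B}' \subseteq \mathcal{B}$ of pairwise-touching bramble elements remains in the components of $G - C$ and that each element of $\mathcal{B}'$ has a neighbour in $C$. Because the $\alpha$-order is large and $G$ is $K_{1,d}$-free, averaging over independent sets in $N(C)$ leaves us with many bramble elements attached to $C$. \emph{Stage 2 (cycle extraction).} From the pairwise-touching family $\mathcal{B}'$ I would extract $\ell$ disjoint connected branch sets $B_1,\dots,B_\ell$ arranged so that $B_i$ is adjacent to $B_{i+1}$ (indices mod $\ell$), giving a $W_\ell$-\emph{minor} model with hub $C$. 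The usual trick is to pick a spanning ``touching graph'' on $\mathcal{B}'$, which is a clique, and to route a Hamiltonian-type cycle through a chosen $\ell$-subset. \emph{Stage 3 (inducedness).} The crucial step is to remove all chords, both within the cycle and between the cycle and the hub, so that the resulting model is an induced minor rather than just a minor. Here the $K_{1,d}$-free assumption pays off: at any vertex of $B_i$ or of $C$, the set of other branch sets that the vertex sees but should not see is an independent family in the touching graph of the remaining branches, and such families have size at most $d-1$. This bounds how many ``bad'' branches can accumulate and lets me iteratively prune and re-contract each $B_i$ to a subset whose neighbourhood among the other branches is exactly $\{B_{i-1}, B_{i+1}, C\}$.

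The main obstacle is precisely Stage 3: turning an arbitrary pairwise-touching family into an induced cycle plus hub with no chords, while still maintaining connectivity of the branch sets. In the ``edge minor'' setting, pairwise touching plus large order is already very strong; forcing inducedness requires a Ramsey-type cleanup where one repeatedly applies the $K_{1,d}$-free constraint to cap the number of spurious adjacencies at each stage. The recursion is what drives the quantitative bound: each cleanup step costs a Ramsey-like blow-up in the size of the bramble we need to start with, and iterating this $\mathcal{O}(\ell)$ times around the cycle explains both the $\ell^{10}$ polynomial term and the $2^{(\max\{d+2,\ell\})^5}$ exponential term in $f_{\ref{thm:forbiddingthewheel}}(d,\ell)$. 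Once we can guarantee a bramble of this inflated order, the three stages combine to produce the desired induced minor model of $W_\ell$, completing the dichotomy.
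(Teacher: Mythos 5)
Your opening move --- a bramble-type duality for tree-independence number --- matches the paper (its Theorem~\ref{thm:brambles}, proved via balanced separators in Theorem~\ref{thm:welllinkedduality}), so that part is fine in spirit. The gap is in Stages~2 and~3, where you try to build the induced $W_\ell$-model directly out of the bramble. First, the elements of a bramble of large $\alpha$-order pairwise intersect (or at least touch), so they are emphatically \emph{not} disjoint and cannot serve as the branch sets $B_1,\dots,B_\ell$ of a minor model; ``the touching graph is a clique, route a Hamiltonian cycle through an $\ell$-subset'' does not produce disjoint connected sets with the prescribed cyclic adjacencies. Extracting such a structure from a bramble is essentially the content of a grid-theorem-type argument, i.e.\ the hard part, not a routine trick. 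Second, your Stage~3 cleanup misuses the hypothesis: $K_{1,d}$-freeness bounds the \emph{independence number} of a vertex's neighbourhood, not the number of branch sets it sees. A single vertex can be adjacent to arbitrarily many branch sets provided its neighbours in them are pairwise adjacent, and the ``bad'' branch sets seen by a vertex need not form an independent family in the touching graph, so the claimed bound of $d-1$ spurious adjacencies does not follow. Pruning chords while preserving connectivity of every branch set is exactly the obstruction that makes induced-minor analogues of the grid theorem difficult, and your proposal offers no mechanism to overcome it.

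The paper takes a different, and crucially feasible, route after the duality step. It runs a Gy\'arf\'as-path-style argument inside the bramble (Lemma~\ref{lemma:dominatingpath} and Theorem~\ref{lemma:dominatingcycle}) to obtain a \emph{long induced cycle} $C$ whose closed neighbourhood covers every bramble element --- this cycle is the \emph{rim} of the wheel, not the hub. The hub then comes for free from Lemma~\ref{lemma:wheelsnexttoholes}: any component of $G-C$ with at least $\ell$ neighbours on $C$ can be contracted to a single vertex, and $C$ contracted down to those attachment points, yielding $W_\ell$ as an induced minor with no cleanup needed. If no such component exists, every component of $G-C$ attaches to fewer than $\ell$ cycle vertices, so contracting each component produces a graph of bounded maximum degree, to which Korhonen's induced grid theorem (Theorem~\ref{prop:inducedgrid}) applies; this either yields a large induced grid (hence $W_\ell$) or a tree-decomposition whose internal bags have bounded independence number (Lemma~\ref{lemma:vincinityofholes}), and an orientation argument on that decomposition then contradicts the large $\alpha$-order of the bramble. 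Korhonen's theorem is the essential external ingredient that resolves the inducedness problem your Stage~3 cannot; without it or an equally strong substitute, the construction you outline does not go through.
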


Our proofs are constructive, in the sense that they can be transformed into an algorithm that provides, given a $K_{1,d}$-free graph $G$ and an integer $\ell$ as input, either an induced minor model of $W_{\ell}$ in $G$, or a tree-decomposition where every bag has independence number at most $f_{\ref{thm:forbiddingthewheel}}(d,\ell)$ in time $|V(G)|^{g(d,\ell)}$, where $g$ is a non-decreasing computable function.
As a consequence, we obtain an algorithm that finds an induced $W_{\ell}$-minor, if there is one, in a $K_{1,d}$-free graph in polynomial time (see \Cref{thm:wheelChecking}).
In particular, this result identifies islands of tractability of the \hbox{\textsc{$H$-Induced Minor Containment}} problem even if the tree-independence number of the input graph is unbounded.

\medskip
Besides our main theorem, we also include a qualitative strengthening of a result from \cite{dallard2024treewidthversuscliquenumber4} stating that for any two positive integers $d$ and $k$, the class of $\{ K_{1,d},P_k\}$-free graphs has bounded tree-independence number.
(We denote by $P_k$ the $k$-vertex path.)
To this end, we introduce an independence version of treedepth.
Treedepth~\cite{NesetrilOdM2006Treedepth}, another widely studied graph parameter that measures how similar a graph is to a star.
One way to define treedepth is through the use of so-called \textsl{elimination forests}.
These are, roughly speaking, rooted forests on the same vertex set as the to-be-decomposed graph $G$, containing $G$ as a subgraph in their transitive closure.
The goal is to minimize the \textsl{depth} of such an elimination forest, which is standardly measured by taking the maximum number of vertices on a root-to-leaf path.
In $\alpha$-treedepth, we instead consider the maximum independence number of a subgraph of $G$ induced by a set of vertices on a root-to-leaf path of the elimination forest.
By doing so, we can allow large cliques for graph classes with bounded $\alpha$-treedepth.
We show that the bound of \cite{dallard2024treewidthversuscliquenumber4} for $\{ K_{1,d},P_k\}$-free graphs still applies when one replaces tree-independence number with $\alpha$-treedepth.
Assuming that an induced star is excluded, this is best possible, as the $\alpha$-treedepth of $P_k$ itself depends on $k$.
This also implies that an analogue of \cref{thm:forbiddingthewheel} where tree-independence number is replaced with $\alpha$-treedepth does not hold.

\begin{restatable}{theorem}{thmalphatreedepth}\label{thm:alphatreedepth}
    For every $k,d\in\mathbb{N}$ the following two statements are true.
    \begin{enumerate}
    	\item $\atd(P_k)=\lceil \log   (\frac{k}{3}+1)\rceil\geq \log (k/3)$, and
    	\item for every $\{P_k,K_{1,d} \}$-free graph $G$ it holds that $\atd(G)\leq \max\{1,(d-1)(k-2)\}$.
    \end{enumerate}
\end{restatable}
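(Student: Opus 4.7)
For part (i), the plan is to establish $\atd(P_k) \leq t$ if and only if $k \leq 3(2^t - 1)$, from which both stated inequalities follow. The upper bound goes by induction on $t$: starting from $\atd(P_3) \leq 1$ (root at the middle), I would show $\atd(P_{2\ell + 3}) \leq t+1$ whenever $\atd(P_\ell) \leq t$. The construction splits $P_{2\ell+3}$ at the middle vertex $v$ and attaches on each side the inductive elimination forest of the half-path $P_{\ell+1}$, \emph{rooted at the endpoint adjacent to $v$}. The crucial observation is that with this choice of root, on every root-to-leaf path in the sub-forest some maximum independent set contains that endpoint; appending $v$ above then lets one swap the endpoint for $v$ in that independent set, keeping the measure at $t+1$ instead of rising to $t+2$.

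For the matching lower bound, I would argue inductively that any elimination forest of $P_k$ with measure $\leq t$ forces $k \leq 3(2^t - 1)$. Given such a forest rooted at $v_j$, consider the two sub-forests on $P_{j-1}$ and $P_{k-j}$: if a root-to-leaf path in a sub-forest avoids the neighbor of $v_j$, then on that path $v_j$ becomes isolated and the measure rises by one, so the sub-forest is forced to have measure $\leq t - 1$ unless it is rooted at that neighbor and the invariant ``the endpoint lies in a maximum independent set of every root-to-leaf path'' holds. Tracking the maximal admissible sub-path size in each case yields the recursive bound $k \leq 2 \cdot 3(2^{t-1} - 1) + 3 = 3(2^t - 1)$.

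For part (ii), I would induct on $k$. The base $k \leq 3$ is immediate: $P_3$-free graphs are disjoint unions of cliques with $\atd = 1$. For the inductive step, reduce to connected $G$ and establish the structural claim that there exists $v \in V(G)$ such that every connected component of $G - N[v]$ is $\{P_{k-1}, K_{1,d}\}$-free. Given such $v$, build the elimination forest by placing $v$ at the root, arranging $N(v)$ as a linear chain below $v$ (valid since chained vertices are pairwise comparable), and attaching below the chain the inductively built elimination forests of the components of $G - N[v]$; edges between $N(v)$ and a component $C$ are captured since the chain lies ancestrally above $C$'s subtree, and no edges run between different components of $G - N[v]$. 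Along any root-to-leaf path $\{v\} \cup N(v) \cup S$ with $S$ in some component's sub-forest, an independent set either takes $v$ alone or up to $\alpha(G[N(v)]) \leq d - 1$ vertices from $N(v)$; combined with $|I \cap S| \leq \atd(C) \leq (d-1)(k-3)$ by induction, this gives $(d-1) + (d-1)(k-3) = (d-1)(k-2)$, as required.

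\textbf{Main obstacle.} The crux of part (ii) is the structural lemma that such a vertex $v$ exists. My plan is to take $v$ as an endpoint of a longest induced path in $G$ and argue by contradiction: if some component of $G - N[v]$ contains an induced $P_{k-1}$, one extends it through a carefully chosen attaching vertex in $N(v)$ back to $v$, producing an induced $P_k$ in $G$ and contradicting $P_k$-freeness. The delicate point is ensuring the extension remains induced, which is handled by choosing the attaching vertex whose neighborhood on the $P_{k-1}$ has extremal index. For part (i), the subtlety is formalizing the swap argument and propagating through the induction the invariant that the distinguished endpoint lies in some maximum independent set of every root-to-leaf path.
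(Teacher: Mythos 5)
Your part (i) is essentially the paper's argument: the recursive middle-splitting construction for the upper bound and the ``root splits the path into two halves, each forced to have smaller depth'' recursion for the lower bound both match. Two small cautions there: for the upper bound, the invariant you need is that \emph{every} (not merely some) maximum independent set of each root-to-leaf path contains the distinguished endpoint, since $\alpha(S\cup\{v\})=\max\{\alpha(S),1+\alpha(S\setminus\{u\})\}$ when $v$'s only neighbour in $S$ is $u$; the cleaner bookkeeping (used in the paper) is that every root-to-leaf path is a union of $t$ pairs of consecutive path vertices. For the lower bound, you must first normalize the elimination tree so that the root really does have exactly two children whose descendant sets are the two sides of the path; this takes a short argument, but your skeleton is right.

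Part (ii) has a genuine gap: the structural claim you reduce to is false. Take $H=P_3$ and let $G$ be the join of $H\cup H$ with $H\cup H$ (twelve vertices). This is a cograph, hence $P_4$-free, it is connected, and $\alpha(G)=4$, so it is $K_{1,5}$-free. For any vertex $v$, say in the first copy of $H$ on one side of the join, $N[v]$ contains the entire other side, so $G-N[v]$ contains the second copy of $H$ on $v$'s side as a full connected component --- and that component is an induced $P_3=P_{k-1}$. So with $k=4$, $d=5$, \emph{no} vertex $v$ has the property that every component of $G-N[v]$ is $P_{k-1}$-free, even when $v$ is an endpoint of a longest induced path. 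Your proposed proof of the claim also fails on this example for a concrete reason: the attaching vertices in $N(v)$ are complete to the offending component, so no induced extension of its $P_3$ back to $v$ exists (consistently with $G$ being $P_4$-free), yet the $P_3$ is still there. The correct fix is the one the paper uses: strengthen the induction to a \emph{rooted} statement --- for every connected $K_{1,d}$-free graph and every vertex $r$, either there is an elimination tree rooted at $r$ of $\alpha$-depth at most $(d-1)(k-2)$, or an induced $P_k$ \emph{with endpoint $r$} --- and recurse not on the components $J$ of $G-N[v]$ themselves but on $G[V(J)\cup\{r_J\}]$ rooted at a designated attachment vertex $r_J\in N(v)\cap N(J)$ (chosen furthest along the spine on $N[v]$). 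A component may well contain an induced $P_{k-1}$; what matters is only whether one starts at $r_J$, and in the example above none does because $r_J$ is complete to the component. The rest of your construction (spine on $N[v]$, components hanging below, the count $(d-1)+(d-1)(k-3)$) then goes through as in the paper.
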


\subsection{Our method: A new tool for the study of tree-independence number}

One strength of treewidth is that it has many useful dual notions, such as brambles, well-linked sets, and tangles.
These have all been key concepts in proofs for the Grid Theorem \cite{Robertson1994Quickly,Reed2012Polynomial,Chuzhoy2021Towards}.
As a key tool towards our main result, we provide similar dual notions for the tree-independence number.
A similar approach was originally studied by Isolde Adler in her doctoral dissertation \cite{IsoldeThesis} for the more general notions of $f$-hypertree-width and the $f$-bramble number, where $f$ is any width function on a graph.
In this paper, we focus on the independence number to establish the following approximate duality between tree-independence number and a relevant notion for brambles.
We postpone the precise definitions to \Cref{sec:duality}. 
(For readers familiar with the notion of brambles: a \emph{strong bramble} is a bramble where every pair of elements have a non-empty intersection, and the \emph{$\alpha$-order} of a bramble $\mathcal{B}$ is the smallest independence number of a cover for~$\mathcal{B}$.)

\begin{restatable}{theorem}{thmbrambles}\label{thm:brambles}
Let $G$ be a graph.
Then, the following statements hold for every positive integer~$k$.
\begin{enumerate}
    \item If $G$ has a strong bramble of $\alpha$-order $k$, then $\alpha\text{-}\mathsf{tw}(G)\geq k$.
    \item If $\alpha\text{-}\mathsf{tw}(G)\geq 4k-2$, then $G$ has a strong bramble of $\alpha$-order at least $k$.
\end{enumerate}
\end{restatable}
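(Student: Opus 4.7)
For part (1), the plan is a direct Helly-type argument. Given any tree-decomposition $(T,\{X_t\}_{t\in V(T)})$ of $G$ and a strong bramble $\mathcal{B}$ of $\alpha$-order at least $k$, for each $B\in\mathcal{B}$ the node set $T_B:=\{t\in V(T):X_t\cap B\neq\emptyset\}$ induces a subtree of $T$, since $G[B]$ is connected and the bags intersecting a connected subgraph form a subtree by the standard properties of a tree-decomposition. Since $\mathcal{B}$ is strong, any two of its elements share a vertex, so the subtrees $T_B$ pairwise intersect. The Helly property of subtrees of a tree then produces a common node $t^{\star}\in\bigcap_{B\in\mathcal{B}}T_B$, and $X_{t^{\star}}$ is a cover of $\mathcal{B}$. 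By the definition of $\alpha$-order, $\alpha(G[X_{t^{\star}}])\geq k$, so the $\alpha$-width of the given decomposition is at least $k$, and hence $\alpha\text{-}\mathsf{tw}(G)\geq k$.

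For part (2), my plan is to route through the intermediate notion of an \emph{$\alpha$-haven} of order $r$: a map $\beta$ that sends every $S\subseteq V(G)$ with $\alpha(G[S])<r$ to a connected component $\beta(S)$ of $G-S$, subject to the monotonicity rule $S\subseteq S'\Rightarrow\beta(S)\supseteq\beta(S')$. The statement will follow from two sub-claims: \textbf{(a)} if $\alpha\text{-}\mathsf{tw}(G)\geq 4k-2$, then $G$ admits an $\alpha$-haven of order $2k-1$; and \textbf{(b)} an $\alpha$-haven of order $2k-1$ yields a strong bramble of $\alpha$-order at least $k$.

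Sub-claim \textbf{(b)} is the easier one. Set $\mathcal{B}:=\{\beta(S) : \alpha(G[S])\leq k-1\}$. Since the independence number is subadditive on unions, $\alpha(G[S\cup S'])\leq 2k-2<2k-1$, so $\beta(S\cup S')$ lies in the domain of $\beta$ and, by monotonicity, is contained in both $\beta(S)$ and $\beta(S')$; thus any two elements of $\mathcal{B}$ meet, and $\mathcal{B}$ is strong. Any cover $H$ with $\alpha(G[H])<k$ would lie in the domain of $\beta$, yet $\beta(H)\in\mathcal{B}$ is a component of $G-H$ disjoint from $H$, contradicting that $H$ hits $\beta(H)$; hence the $\alpha$-order of $\mathcal{B}$ is at least $k$.

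Sub-claim \textbf{(a)} is the technical heart of the proof and where I expect the principal obstacle. The plan is to adapt the Seymour--Thomas duality between havens and tree-decompositions to the independence-number setting: assuming no $\alpha$-haven of order $2k-1$ exists, build a tree-decomposition of $\alpha$-width at most $4k-3$ by recursing on balanced $\alpha$-separators supplied by the failure of the haven's monotonicity. The hard part will be the bookkeeping: unlike vertex count, $\alpha$ is only subadditive (and is poorly behaved under intersection), so the recursive decomposition must carefully track independence numbers of bags as they are merged. The factor-$2$ jump from $2k-1$ (haven order) to $4k-2$ ($\alpha$-width threshold) reflects the cost of combining two separators of independence number up to $2k-2$ inside each bag when gluing subtrees together; sharpening this constant, if possible, is a secondary question I would leave open.
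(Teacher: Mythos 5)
Your part (1) is correct: the Helly-property argument (bags meeting a connected set form a subtree; pairwise-intersecting subtrees of a tree share a node) is sound and is essentially a repackaging of the paper's proof, which reaches the same common bag by orienting the edges of $T$ and finding a sink. Your sub-claim \textbf{(b)} is also correct and mirrors the paper's \Cref{lemma:bramble_from_linked_set}: there the haven is realized concretely as $S\mapsto C_S$, the unique component of $G-S$ retaining more than half of the independence number of a $(2k-2)$-$\alpha$-linked set $X$, and monotonicity follows from the uniqueness of $C_S$.

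The genuine gap is sub-claim \textbf{(a)}, which you state as a plan and explicitly flag as containing the ``principal obstacle'' without resolving it. This is precisely the technical core of the paper (\Cref{thm:welllinkedduality}, stated in contrapositive via $\alpha$-balanced separators rather than havens), and the difficulty you anticipate is real: since $\alpha$ is only subadditive, the classical Seymour--Thomas induction does not transfer verbatim. Two points must be supplied. First, a termination argument: the paper cannot induct on bag size, so it introduces a progress measure (the number of ``treated'' vertices) and shows each refinement step strictly increases it by inserting one fresh untreated vertex $v_i$ into each new internal bag. Second, the separator bookkeeping: starting from an adhesion $X$ with $\alpha(X)=2k+1$ and a balanced separator $S$ with $\alpha(S)\leq k$, one must verify that each new adhesion $X_i\cup S'$ satisfies $\alpha(X_i\cup S')\leq \lfloor(2k+1)/2\rfloor+k=2k$ (using that $X_i$ lies in a single component of $G-S$, so the balancedness bound applies to it), and that $(R_i\cup S',(V(G)\setminus R_i)\cup X_i\cup S')$ is genuinely a separation so that the refined object is still a tree-decomposition. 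Without these two ingredients the implication from $\alpha\text{-}\mathsf{tw}(G)\geq 4k-2$ to the existence of your haven (equivalently, a $(2k-2)$-$\alpha$-linked set) is unproven, and part (2) of the theorem does not follow.
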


In order to explain our first application of this notion of brambles, we need to recall a result of Seymour~\cite{Seymour2016}. 
He proved that for every integer $k\ge 4$, if a graph $G$ excludes all induced cycles of length at least $k$, then $G$ admits a tree-decomposition in which the subgraph induced by each bag has a dominating path with at most $k-3$ vertices.
If in addition $G$ is $K_{1,d}$-free for some $d\ge 2$, the above property implies that the tree-independence number of $G$ is at most $(d-1)(k-3)$.
Using \Cref{thm:brambles}, this implies the following.

\begin{corollary}\label{thm:from-Seymour}
Let $d\ge 2$ and $k\ge 4$ be integers and $G$ be a $K_{1,d}$-free graph with a strong bramble $\mathcal{B}$ of $\alpha$-order at least $dk$.
Then there exists an induced cycle $C$ of length at least $k$ in $G$.
\end{corollary}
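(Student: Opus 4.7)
The plan is to derive a contradiction by combining \Cref{thm:brambles} with Seymour's theorem on graphs with no long induced cycles, which the excerpt explicitly makes available to us.

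First, I would apply the first part of \Cref{thm:brambles} to the strong bramble $\mathcal{B}$. Since $\mathcal{B}$ has $\alpha$-order at least $dk$, this immediately gives $\alpha\text{-}\mathsf{tw}(G)\geq dk$. The goal is then to contradict this lower bound under the assumption that $G$ contains no induced cycle of length at least $k$.

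Next, I would assume for contradiction that every induced cycle of $G$ has length strictly less than $k$. By Seymour's result recalled just before the corollary, $G$ admits a tree-decomposition in which, for every bag $B$, the subgraph $G[B]$ has a dominating path $P_B$ on at most $k-3$ vertices. Using the hypothesis that $G$ is $K_{1,d}$-free, I would then bound the independence number of each bag: any independent set $I\subseteq B$ can be partitioned according to which vertex of $P_B$ dominates it, and since the neighborhood of any vertex of a $K_{1,d}$-free graph has independence number at most $d-1$, we get $|I|\leq (d-1)(k-3)$. Consequently, $\alpha\text{-}\mathsf{tw}(G)\leq (d-1)(k-3)$.

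Finally, I would close the argument with the elementary estimate
\[
(d-1)(k-3)=dk-3d-k+3<dk,
\]
which holds for all $d\geq 2$ and $k\geq 4$, contradicting $\alpha\text{-}\mathsf{tw}(G)\geq dk$. Hence $G$ must contain an induced cycle of length at least $k$. There is no real obstacle here: the whole content is packaged into \Cref{thm:brambles}(1) on the bramble side and into Seymour's theorem on the structural side, and the mild care needed is only to verify the domination-to-independence bound in $K_{1,d}$-free graphs and the final inequality.
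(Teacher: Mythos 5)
Your proposal is correct and follows exactly the route the paper intends: Theorem~\ref{thm:brambles}(1) gives $\alpha\text{-}\mathsf{tw}(G)\ge dk$, Seymour's theorem plus the $K_{1,d}$-free domination argument gives $\alpha\text{-}\mathsf{tw}(G)\le (d-1)(k-3)$ if there is no long induced cycle, and $(d-1)(k-3)<dk$ yields the contradiction. The paper leaves this as an immediate consequence of the discussion preceding the corollary, and your write-up fills in the same details (including the correct bound on the independence number of a bag dominated by a short path).
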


We strengthen \Cref{thm:from-Seymour} to tighten the relation between the induced cycle and the bramble as follows.

\begin{restatable}{theorem}{thmlongdominatingholes}\label{cor:longdominatingholes}
    Let $d\geq 1$ and $k\geq 2$ be integers and $G$ be a $K_{1,d}$-free graph with a strong bramble $\mathcal{B}$ of $\alpha$-order at least $dk$.
    Then there exists an induced cycle $C$ of length at least $k$ in $G$ such that $N[C]\cap B\neq\emptyset$ for all $B\in \mathcal{B}$.
\end{restatable}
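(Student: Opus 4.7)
The plan is to mimic the contrapositive-via-Seymour strategy behind \Cref{thm:from-Seymour}, but to apply it inside a carefully chosen induced subgraph so that the long induced cycle produced is automatically forced to be close to every bramble element.

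The starting observation exploits strongness of $\mathcal{B}$: for every $B \in \mathcal{B}$ and every $B' \in \mathcal{B}$ we have $B \cap B' \neq \emptyset$, so each $B \in \mathcal{B}$ is itself a cover of $\mathcal{B}$, which gives $\alpha(B) \geq dk$. This yields a clean sufficient condition: it suffices to find an induced cycle $C$ of length at least $k$ with $B_0 \subseteq N_G[V(C)]$ for \emph{some} fixed $B_0 \in \mathcal{B}$, because then, for every other $B \in \mathcal{B}$, any vertex in $B \cap B_0$ (non-empty by strongness) lies in $B \cap N_G[V(C)]$.

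Fixing such a $B_0$, I would move to the induced subgraph $H := G[N_G[B_0]]$, which is again $K_{1,d}$-free. The main technical step is to manufacture a strong bramble $\mathcal{B}^\ast$ in $H$ of $\alpha$-order at least $dk$, and, crucially, one whose every element intersects $V(H) \setminus B_0$ in a way that forces any cover of $\mathcal{B}^\ast$ in $H$ to dominate all of $B_0$ in $G$. The candidates I would try are of the shape
\[
\mathcal{B}^\ast \;=\; \bigl\{\, R(B) \;:\; B \in \mathcal{B}\,\bigr\},
\]
where $R(B)$ is a connected subset of $N_G[B_0]$ obtained from $B \cap N_G[B_0]$ by reconnecting its components through a minimal subset of $B_0$ (Menger-style), so that each $R(B)$ is connected in $H$ and pairwise intersections inherit from $\mathcal{B}$. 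To argue that the $\alpha$-order of $\mathcal{B}^\ast$ in $H$ is still at least $dk$, I would turn any cover $X$ of $\mathcal{B}^\ast$ in $H$ into a cover of $\mathcal{B}$ in $G$ by at most adding the closed neighbourhood of a bounded set in $B_0$, and use the $K_{1,d}$-free hypothesis to control the additive independence cost (closed neighbourhoods have independence number at most $d-1$). Once this bramble is in place, \Cref{thm:from-Seymour} applied to $(H,\mathcal{B}^\ast)$ delivers an induced cycle $C \subseteq H$ of length at least $k$, and the construction of $\mathcal{B}^\ast$ is what ensures that $C$ dominates $B_0$ in $G$, closing the reduction.

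The main obstacle I expect is the construction of $\mathcal{B}^\ast$ in the second step. A naive attempt such as ``glue $B_0$ into every element'' makes the family strong but collapses the $\alpha$-order to $1$, which is useless. Avoiding this collapse while retaining pairwise intersection is precisely where the $K_{1,d}$-free hypothesis must be used quantitatively: any local obstruction to reconnecting $B \cap N_G[B_0]$ cheaply inside $B_0$ would produce an independent set in a closed neighbourhood of size exceeding $d-1$, which is forbidden. If this clean reduction resists, the fallback is a direct Seymour-style induction: apply \Cref{thm:from-Seymour} to get a long induced cycle $C$, consider the strong sub-bramble $\{B \in \mathcal{B} : B \cap N_G[V(C)] = \emptyset\}$ on $G - N_G[V(C)]$, and recurse, tracking the $\alpha$-order drop via $\alpha_G(X \cup N_G[V(C)]) \leq \alpha_G(X) + \alpha_G(N_G[V(C)])$ together with the $K_{1,d}$-free bound $\alpha_G(N_G[V(C)]) \leq (d-1)|V(C)|$ for a shortest eligible~$C$.
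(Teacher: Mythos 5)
Your opening reductions are fine (each $B\in\mathcal{B}$ is itself a cover, hence $\alpha(B)\geq dk$; and a cycle with $B_0\subseteq N[V(C)]$ would dominate all of $\mathcal{B}$ by strongness), but the core of the argument has a genuine gap: you are trying to extract the domination property from \Cref{thm:from-Seymour}, and that theorem simply does not provide it. \Cref{thm:from-Seymour} (via Seymour's decomposition result) only asserts the \emph{existence} of an induced cycle of length at least $k$ somewhere in the graph; it gives no relation whatsoever between that cycle and the bramble. The sentence ``the construction of $\mathcal{B}^\ast$ is what ensures that $C$ dominates $B_0$'' is precisely the statement that needs proving and is nowhere argued: even if you succeed in building a strong bramble $\mathcal{B}^\ast$ of $\alpha$-order $dk$ inside $H=G[N_G[B_0]]$, the long induced cycle returned by \Cref{thm:from-Seymour} applied to $H$ could sit in a small corner of $H$ far from most of $B_0$. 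Working inside $N_G[B_0]$ only yields $V(C)\subseteq N_G[B_0]$, which is the reverse of the containment $B_0\subseteq N_G[V(C)]$ that your reduction requires. In effect the argument is circular: you need the conclusion of the theorem for $(H,\mathcal{B}^\ast)$ in order to derive it for $(G,\mathcal{B})$. The fallback recursion does not repair this, because each iteration produces a \emph{different} cycle dominating a different part of $\mathcal{B}$, whereas the statement demands a single induced cycle meeting every $B\in\mathcal{B}$; when the residual sub-bramble's $\alpha$-order drops below the threshold, the remaining elements are dominated by none of your cycles.

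The missing idea is that domination must be built directly rather than inherited from an existence theorem. The paper does this with a Gy\'arf\'as-path-type argument (\Cref{lemma:dominatingpath}): grow an induced path $P$ greedily so that each extension step reaches the neighbourhood of a not-yet-dominated bramble element, using strongness to route the extension through the previous element without creating chords; then close a shortest such path into an induced cycle through $B_s\cup B_t$ (\Cref{lemma:dominatingcycle}). Only \emph{after} a dominating cycle $C$ is in hand does $K_{1,d}$-freeness enter, and then purely for counting: $N[C]$ is a cover, so $dk\leq\alpha(N[C])\leq d\cdot|V(C)|$, forcing $|V(C)|\geq k$. Note also that this route works for all $d\geq 1$ and $k\geq 2$, whereas \Cref{thm:from-Seymour} already requires $d\geq 2$ and $k\geq 4$.
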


We believe that understanding the structural properties of strong brambles of large $\alpha$-order is key for a possible resolution of the conjecture of Dallard et al.
In our case, the long induced cycle we are guaranteed to find in relation to any strong bramble of large $\alpha$-order due to \Cref{cor:longdominatingholes} directly allows us to construct a large induced wheel minor in any $K_{1,d}$-free graph of large tree-independence number.

\subsection{Related work}

The \textsl{Grid Theorem} of Robertson and Seymour \cite{GraphMinor5} can be understood as a min-max relation between the treewidth of a graph and its largest grid minor.
Recently, considerable attempts have been made to understand treewidth through a similar min-max relation using induced subgraphs (see for example \cite{alecu2023induced}).
Replacing treewidth with tree-independence number often allows for more streamlined arguments for the study of forbidden induced objects \cite{dallard2024treewidthversuscliquenumber2,chudnovsky2024tree}.
We believe that the existence of canonical obstructions such as the brambles discussed in this paper will be crucial in further streamlining arguments about tree-independence number and forbidden induced substructures.

For graphs $G$ of bounded degree, i.e.\@ a proper subclass of graphs of bounded clique number, by the arguments above there is only a qualitative difference between\footnote{We denote the treewidth of a graph $G$ by $\mathsf{tw}(G)$.} $\mathsf{tw}(G)$ and $\alpha\text{-}\mathsf{tw}(G)$.
Hence, the study of both parameters essentially coincides within classes of bounded degree.
Korhonen \cite{Korhonen2023InducedGrid} proved that for a graph with bounded maximum degree, if its treewidth is sufficiently large, the graph must contain a large grid as an induced minor.

\begin{theorem}[Korhonen \cite{Korhonen2023InducedGrid}]\label{prop:inducedgrid}
There exists a function $f_{\ref{prop:inducedgrid}}(d,k)\in\mathcal{O}(k^{10}+2^{d^5})$ such that every graph $G$ with maximum degree at most $d$ and $\mathsf{tw}(G)\geq f_{\ref{prop:inducedgrid}}(d,k)$ contains the $(k\times k)$-grid as an induced minor.
\end{theorem}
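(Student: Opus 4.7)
The plan is to derive \Cref{prop:inducedgrid} from the classical Grid Theorem of Robertson and Seymour (in its polynomial form due to Chuzhoy and Tan), followed by a structural cleanup that exploits the bounded-degree assumption to convert a minor model into an induced minor model. The exponent $10$ on $k$ in the bound $f_{\ref{prop:inducedgrid}}(d,k)\in\mathcal{O}(k^{10}+2^{d^5})$ reflects the polynomial bound in the current best Grid Theorem, while the $2^{d^5}$ summand is the additional cost paid to pass from ``minor'' to ``induced minor''. Throughout, it is convenient to work with walls rather than grids, since walls have maximum degree $3$ and are easier to manipulate locally.

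First, given $G$ with maximum degree at most $d$ and $\mathsf{tw}(G)\geq c\cdot k^{10}$ for a suitable absolute constant $c$, apply the polynomial Grid Theorem to obtain a wall $W$ of size $k'$ (for some $k'\gg k$ depending polynomially on $k$) as a minor in $G$, together with an explicit minor model $(B_v)_{v\in V(W)}$ consisting of pairwise disjoint connected subgraphs. In each $B_v$, replace the branch set by a minimal connected ``backbone'' still realizing all required wall-adjacencies; since $W$ has maximum degree at most $3$, each backbone becomes a subdivided tree on at most $3$ leaves, whose internal vertices have degree $2$ in the model. After this trimming, the only obstruction to the model being an induced minor model is the presence of \emph{chord edges}: edges of $G$ joining $B_u$ and $B_v$ for non-adjacent wall vertices $u,v$, or joining two internal vertices of the same backbone.

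Next, we use the degree bound to control the chords. Because each vertex of $G$ has at most $d$ neighbours, each backbone $B_v$ is incident with $\mathcal{O}(d\cdot|B_v|)$ chord edges globally, but, crucially, each single vertex of $B_v$ initiates at most $d$ of them. Classify each branch set by its ``local chord type'', that is, the isomorphism type of the bounded-radius ball around $B_v$ in $G$ together with the pattern of chord endpoints; the number of such types depends only on $d$ and is bounded by $2^{\mathcal{O}(d^5)}$, since the relevant radius and the number of relevant vertices are bounded by polynomials in $d$. Iteratively restricting to a large sub-wall on which all branch sets have the same type (using a grid-Ramsey argument on $W$, which costs a polynomial factor in $k$), we arrive at a wall sub-model of size still polynomial in $k$ in which the chord structure is globally uniform.

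The main obstacle, and the technical heart of the argument, is Step three: on the uniform sub-wall, we must genuinely eliminate the chord edges rather than merely classify them. The idea is to perform a local rerouting of each backbone that destroys its chord incidences while keeping connectivity to its wall-neighbours, and to argue that on a large enough uniform sub-wall these local modifications can be carried out simultaneously without creating new conflicts. Because the interaction radius is bounded by a function of $d$, rerouting at $B_v$ can only affect other branch sets within a constant-size ball around $v$ in $W$; hence a standard colouring / iterative extraction argument produces a sub-wall of size $k$ on which all rerouted backbones are pairwise non-conflicting, yielding an induced $k$-wall model and therefore, by standard cleanup, an induced $(k\times k)$-grid model. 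Tracking constants yields the stated bound $f_{\ref{prop:inducedgrid}}(d,k)\in\mathcal{O}(k^{10}+2^{d^5})$, where the first term comes from the polynomial Grid Theorem and the second from the local classification and rerouting step.
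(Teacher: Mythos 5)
First, a point of comparison: the paper does not prove \Cref{prop:inducedgrid} at all; it is imported as a black box from Korhonen \cite{Korhonen2023InducedGrid} and used only as an ingredient in \Cref{lemma:vincinityofholes}. So there is no internal proof to measure your attempt against, and your sketch has to stand on its own as a proof of Korhonen's theorem. It does not.

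The plan ``extract a wall minor via the polynomial grid theorem, then clean the model into an induced one'' puts all of the difficulty into the cleanup, and the mechanism you propose for that cleanup fails at its key claim. You assert that, because $G$ has maximum degree at most $d$, the ``interaction radius'' of a branch set is bounded in terms of $d$, so that rerouting $B_v$ only affects branch sets within a constant-size ball around $v$ in the wall $W$. That is not true: bounded degree bounds the \emph{number} of chord edges incident with each vertex, but says nothing about where in $W$ their other endpoints lie; a single edge of $G$ may join two branch sets whose wall vertices are arbitrarily far apart. Consequently the ``local chord type'' of $B_v$ is not a bounded amount of information (it would have to record which distant branch sets are hit), the Ramsey/uniformization step over $2^{\mathcal{O}(d^5)}$ types is not available, and the simultaneous local rerouting has no reason to avoid long-range conflicts. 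Handling exactly these long-range chords is the substance of Korhonen's theorem, and it requires a genuinely different, global argument rather than a chord-by-chord repair of a minor model; without it, your steps three and four are assertions, and the claimed bound $\mathcal{O}(k^{10}+2^{d^5})$ is not actually derived.
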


Notice that graphs of maximum degree at most $d-1$ are exactly those that do not contain $K_{1,d}$ as a \textsl{subgraph}.
A natural generalization of this property to the world of induced subgraphs would be to exclude $K_{1,d}$ as an \textsl{induced} subgraph.
In other words, for each vertex in a $K_{1,d}$-free graph, its neighbourhood has bounded independence number.
Thus, we can consider $K_{1,d}$-free graphs as those of bounded ``induced'' maximum degree.
It is apparent that the equivalence of treewidth and tree-independence number can no longer hold: the class of complete graphs is contained in the class of $K_{1,2}$-free graphs, has unbounded treewidth but tree-independence number bounded by $1$.
Building on this intuition, in \cite{dallard2024treewidthversuscliquenumber4}, Dallard et al.\@ conjectured an analogue of \Cref{prop:inducedgrid} for $K_{1,d}$-free graphs.

\begin{conjecture}[Dallard et al.~\cite{dallard2024treewidthversuscliquenumber4}]\label{conj:K1dfreeinducedgrid}
    There exists a function $f(d,k)$ such that every $K_{1,d}$-free graph $G$ with $\alpha$-$\mathsf{tw}(G)\geq f(d,k)$ contains the $(k\times k)$-grid as an induced minor.
\end{conjecture}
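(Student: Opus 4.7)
The first order of business is to convert the high-$\alpha$-treewidth hypothesis into the combinatorial object the plan exploits: a strong bramble of large $\alpha$-order. By the second part of \Cref{thm:brambles}, choosing the target function so that $f(d,k)\ge 4N(d,k)-2$ for a sufficiently large parameter $N(d,k)$ fixed below, the assumption $\alpha\text{-}\mathsf{tw}(G)\ge f(d,k)$ produces a strong bramble $\mathcal{B}$ in $G$ of $\alpha$-order at least $N(d,k)$. From this point on, $G$ itself recedes into the background and the construction is driven by $\mathcal{B}$ together with the $K_{1,d}$-free assumption.

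The core of the plan is an iterative extraction of parallel induced cycles, to be identified with the horizontal rows of a $(k\times k)$-grid. By \Cref{cor:longdominatingholes}, applied with length parameter $\Theta(k^2)$, one obtains an induced cycle $C_1$ of length at least $k^2$ whose closed neighborhood meets every $B\in\mathcal{B}$. The first genuine step is then to replace $\mathcal{B}$ by a sub-bramble $\mathcal{B}_1$ which avoids $V(C_1)$ while still being strong and of $\alpha$-order at least $N(d,k)-O(d)$; since $C_1$ need not lie inside a single element of $\mathcal{B}$, each $B\in\mathcal{B}$ must be surgically trimmed near $N[C_1]$, and one must certify that pairwise intersections survive — the dominating-cycle property of $C_1$ and $K_{1,d}$-freeness are the natural tools here. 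Iterating this "peel-and-reapply" procedure $k$ times yields pairwise vertex-disjoint induced cycles $C_1,\ldots,C_k$, each dominating a common refined bramble $\mathcal{B}'$ of $\alpha$-order at least $N(d,k)-O(dk)$.

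With the $k$ prospective rows in place, the plan turns to producing $k$ transverse induced paths $P_1,\ldots,P_k$, each traversing $C_1,\ldots,C_k$ in order. Such paths should again be extracted from $\mathcal{B}'$: inside any sufficiently large element of $\mathcal{B}'$ the strong-bramble property supplies enough connectivity to link chosen attachment points on consecutive cycles, after which a standard induced-shortening argument produces a genuinely induced path; a pigeonhole on the $\Omega(k^2)$ potential attachment positions along each $C_i$ lets the $P_j$'s be made vertex-disjoint. Contracting, on each cycle $C_i$, the arcs between consecutive $P_j$-attachment points, and symmetrically contracting each $P_j$ into its pieces between consecutive cycles, delivers a candidate branch-set model with $k^2$ branch sets arranged in rows and columns, with all grid edges realised by construction.

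The principal obstacle — and the reason \Cref{conj:K1dfreeinducedgrid} remains open — lies precisely in ruling out the \emph{extra} adjacencies between non-consecutive branch sets: a chord between a row-cycle $C_i$ and a far-away column-path $P_j$, or between two non-adjacent cycles, instantly spoils the induced minor model. The only lever the hypothesis offers is that every vertex has at most $d-1$ pairwise non-adjacent neighbors, which bounds, along any short window of a $C_i$ or $P_j$, the number of vertices that can contribute chords to a common far-away branch set. Converting this local control into a global Ramsey-style cleanup — starting from an enormous initial bramble, choosing the $C_i$ and $P_j$ so that all non-consecutive interactions have been eliminated through iterated refinements — is the decisive step that a proof would have to carry out, and it is exactly where the present paper stops short of the full conjecture, settling instead the wheel case via \Cref{thm:forbiddingthewheel}, where only one dominating cycle together with a single central vertex has to be extracted.
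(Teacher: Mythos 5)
This statement is \Cref{conj:K1dfreeinducedgrid}, an \emph{open conjecture}: the paper does not prove it, and only establishes the special case where the excluded pattern is a wheel (\Cref{thm:forbiddingthewheel}). Your write-up, to its credit, concedes this in its last paragraph, but that concession means the proposal is not a proof, and the intermediate steps also contain concrete gaps rather than merely an unfinished final step. The most serious one is the ``peel-and-reapply'' step. \Cref{cor:longdominatingholes} produces a cycle $C_1$ whose \emph{closed neighbourhood is a cover of} $\mathcal{B}$; nothing prevents the entire bramble from living inside $N[C_1]$, so after deleting $N[C_1]$ the trimmed sets may be empty, disconnected, or pairwise disjoint, and there is no bound of the form ``$\alpha$-order drops by $O(d)$''. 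The $\alpha$-order of $\mathcal{B}$ is defined via covers, and $N[C_1]$ is itself a cover whose independence number is only bounded by $d\,|V(C_1)|$ from above --- it gives no lower bound on what survives outside it. So the claimed sub-bramble $\mathcal{B}_1$ is not justified, and the whole iteration of $k$ disjoint dominating cycles collapses at the first step.

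Beyond that, the construction of the transverse paths $P_1,\dots,P_k$ and the ``pigeonhole on attachment positions'' is not an argument: induced shortenings inside bramble elements can create adjacencies to several cycles at once, and $K_{1,d}$-freeness only bounds the \emph{independence number} of a vertex's neighbourhood, so a single vertex of some $C_i$ may still be adjacent to arbitrarily many distinct branch sets (its neighbours there may form a clique). This is exactly the obstruction you name yourself in the final paragraph --- the suppression of chords between non-consecutive branch sets --- and it is the step no one currently knows how to carry out; it is why the grid case is open while the wheel case is tractable, since for $W_\ell$ one only needs a single dominating cycle plus one connected hub, and extra adjacencies to the hub or along the cycle can be absorbed by contraction (cf.\ \Cref{lemma:wheelsnexttoholes}). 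In short: the tools you invoke (\Cref{thm:brambles}, \Cref{cor:longdominatingholes}) are the right starting point and are indeed how the paper attacks the wheel case, but the proposal does not prove \Cref{conj:K1dfreeinducedgrid}, and neither does the paper --- it remains a conjecture.
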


In the same paper, Dallard et al.\@ provide partial evidence towards their conjecture.
They prove two results, the first is that every $\{K_{1,d},P_k\}$-free graph $G$ (with $d\ge 2$ and $k\ge 3$) satisfies $\alpha$-$\mathsf{tw}(G)\leq(d-1)(k-2)$ and the second is that every $K_{1,d}$-free graph that excludes a $k$-times subdivided claw\footnote{For $k\in\mathbb{N}$, a \emph{$k$-times subdivided claw} is a graph obtained from $K_{1,3}$ by subdividing every edge $k$ times.} as an induced minor has tree-independence number bounded by some function depending only on $d$ and $k$ (see also~\cite{chudnovsky2025treeindependencenumberv} for a more general result).
Recently, Chudnovsky, Hajebi, and Trotignon \cite{chudnovsky2024tree} proved that excluding so-called \textsl{thetas} and \textsl{prisms} from $K_{1,d}$-free graphs also results in classes of bounded tree-independence number.
Furthermore, Ahn, Gollin, Huynh, and Kwon \cite{ahn2024coarseerdhosposatheorem} proved that there is a function $f$ such that for every $k\in\mathbb{N}$, every $K_{1,d}$-free graph $G$ either contains $k$ pairwise disjoint and pairwise non-adjacent induced cycles, or there is a set $X\subseteq V(G)$ such that $\alpha(G[X])\leq f(k)$ and $G-X$ is chordal, implying that $\alpha\text{-}\mathsf{tw}(G)\leq f(k)+1$.

The results above are the first that provide evidence towards \Cref{conj:K1dfreeinducedgrid} by showing that the exclusion of small planar patterns forces bounded tree-independence number in $K_{1,d}$-free graphs.
The wheels $W_{\ell}$, $\ell\geq 3$, while containing long induced paths and cycles, are in general incomparable with respect the induced minor relation with both the thetas and prisms of Chudnovsky et al.\@ and the subdivided claws of Dallard et al.
In this regard, our main result, \Cref{thm:forbiddingthewheel}, presents a novel structure whose exclusion forces small tree-independence number in $K_{1,d}$-free graphs.
Moreover, wheels mark the first infinite class of \textsl{$3$-connected} planar graphs whose exclusion as induced minors bounds the tree-independence number of $K_{1,d}$-free graphs.
With grids being (almost) $3$-connected, this can be considered to be a major milestone towards a (hopefully positive) resolution of \Cref{conj:K1dfreeinducedgrid}.

\section{Preliminaries}\label{sec:Prelim}

All logarithms in this paper are base $2$.
Let $a,b\in\mathbb{Z}$.
We denote the set $\{ a,a+1,\dots,b-1,b\}$ by $[a,b]$.
Notice that $[a,b]=\emptyset$ in case $a>b$.
Moreover, we further abbreviate the set $[1,a]$ by $[a]$.

All graphs we consider in the paper are finite, simple, and undirected.
For a graph $G$, let $V(G)$ denote the vertex set of $G$, and let $E(G)$ denote the edge set of $G$.

Let $G$ and $H$ be graphs.
We say that $H$ is an \emph{induced subgraph} of $G$ if $H$ can be obtained from $G$ by a sequence of vertex deletions.
If $H$ is a subgraph of $G$ we write $G-H$ for $G-V(H)$.

Let $G$ and $H$ be graphs and $e=xy\in E(G)$ be an edge.
We say that $H$ is obtained from $G$ by \emph{contracting} $e$ if there exists a vertex $u\in V(H)$ such that $H-u = G-x-y$ and $N_H(u)= N_G(x)\cup N_G(y)$.
We say that $H$ is an \emph{induced minor} of $G$ if $H$ can be obtained from $G$ by a sequence of vertex deletions and edge contractions.

Given a set $\mathcal{H}$ of graphs, we say that a graph $G$ is \emph{$\mathcal{H}$-free} if there does not exist an $H\in\mathcal{H}$ such that $G$ contains $H$ as an induced subgraph.
In case $\mathcal{H}=\{ H\}$ we also write \emph{$H$-free} instead of $\mathcal{H}$-free.

Let $G$ be a graph and $X\subseteq V(G)$.
We denote by $N_G(X)$ the set $\{ v\in V(G)\setminus X \colon vx\in E(G)\text{ for some }x\in X \}$.
We call $N_G(X)$ the \emph{neighbourhood} of $X$ in $G$.
Moreover, $N_G[X]\coloneqq N_G(X)\cup X$ is called the \emph{closed neighbourhood} of $X$.
By a slight abuse of notation, if $H$ is an induced subgraph of $G$, we write $N_G[H]$ (respectively $N_G(H)$) for $N_G[V(H)]$ (respectively $N_G(V(H))$).
In case $G$ is understood from the context, we write $N$ instead of $N_G$ and, for $X\subseteq V(G)$, we write $\alpha(X)$ to denote the value $\alpha(G[X])$ for the maximum size of an independent set in the subgraph of $G$ induced by $X$.

\paragraph{Trees, roots, and tree-decompositions.}
A \emph{rooted forest} is a pair $(F,R)$ where $F$ is a forest $R$ is a set of vertices, one from each connected component of $F$.
The vertices in $R$ are called the \emph{roots} of $F$.
If $F$ has a unique component and $R=\{r\}$ we may write $(F,r)$ instead of $(F,R)$, and in this case $(F,r)$ is a \emph{rooted tree}.
When specifying a rooted forest, we may omit $R$ and write only $F$ if the context permits it.
Let $(F,R)$ be a rooted forest. 
An \emph{ancestor} of a vertex $v\in V(F)$ is a vertex distinct from $v$ that belongs to the unique path from $v$ to a root. 
Furthermore, a \emph{descendant} of a vertex $v\in V(F)$ is a vertex that admits $v$ as an ancestor.
In a (rooted) forest, a vertex with at most one neighbour is called a \emph{leaf}, and a vertex with at least two neighbours, or that is a root in the rooted case, is called an \emph{internal vertex}.

A \emph{tree-decomposition} for a graph $G$ is a tuple $\mathcal{T}=(T,\beta)$ where $T$ is a tree and $\beta\colon V(T)\to 2^{V(G)}$, called the \emph{bags} of $\mathcal{T}$ (here $2^{V(G)}$ denotes the power set of the vertex set of $G$), such that
\begin{enumerate}
	\item $\bigcup_{t\in V(T)}\beta(t)=V(G)$,
	\item for every $e\in E(G)$ there exists some $t\in V(T)$ such that $e\subseteq \beta(t)$, and
	\item for every vertex $v\in V(G)$ the set $\{ t\in V(T) \colon v\in\beta(T) \}$ induces a subtree of $T$.
\end{enumerate}
The \emph{width} of $\mathcal{T}$ is defined as the value $\max_{t\in V(T)}|\beta(t)|-1$ and the \emph{treewidth} of $G$, denoted by $\mathsf{tw}(G)$, is the minimum width over all tree-decompositions for $G$.
The \emph{$\alpha$-width} of $\mathcal{T}$ is defined as the value $\alpha$-$\mathsf{width}(\mathcal{T})=\max_{t\in V(T)}\alpha(\beta(t))$ and the \emph{tree-independence number} of $G$, denoted by $\alpha$-$\mathsf{tw}(G)$, is the minimum $\alpha$-width over all tree-decompositions for $G$.

\section{A qualitative duality for \texorpdfstring{$\alpha$-treedepth}{α-treedepth} in \texorpdfstring{$K_{1,d}$}{K\_\{1,d\}}-free graphs}\label{sec:treedepth}

As a warm-up, we strengthen a result from \cite{dallard2024treewidthversuscliquenumber4} stating that $K_{1,d}$-free graphs without long induced paths have bounded tree-independence number.
We show that in this statement, tree-independence number can be replaced by a new parameter, $\alpha$-treedepth, which acts as an ``independence'' counterpart to the classic notion of treedepth.

\subsection{The definition of \texorpdfstring{$\alpha$-treedepth}{α-treedepth} and preliminary results}

An \emph{elimination forest} for a graph $G$ is a rooted forest $F$ such that $V(F)=V(G)$ and for every edge $uv\in E(G)$ there exists a root-to-leaf path $P$ in $F$ such that $u,v\in V(P)$.
The \emph{$\alpha$-depth} of a rooted forest $F$ is the maximum value of $\alpha(G[V(P)])$ over all root-to-leaf paths $P$ in $F$. 
The \emph{$\alpha$-treedepth} of a graph $G$, denoted by $\atd$ is the minimum $\alpha$-depth over all elimination forests for $G$.

We start by characterising the graphs with $\alpha$-treedepth at most $1$.
By definition, a graph $G$ has $\alpha$-treedepth at most $1$ if and only if $G$ admits an elimination forest $F$ such every root-to-leaf path $P$ in $F$ induces a clique in $G$. 
In this case, we say that $G$ is the \emph{transitive closure} of $F$.
In \cite[Theorem 3]{Yan1996Quasithreshold}, the authors give several equivalent characterisations of the \emph{quasi-threshold} graphs (also known as \emph{trivially perfect graphs}). 
Among other interesting characterisations, this class of graphs is exactly the class of the $\{P_4,C_4\}$-free graphs.
(We denote by $C_k$ the $k$-vertex cycle.)
Furthermore, $G$ is a quasi-threshold graph if and only if $G$ is the transitive closure of some rooted forest. 
Thus we can state this characterisation in terms of $\alpha$-treedepth.

\begin{theorem}
For every graph $G$, $G$ has $\atd(G) \le 1$ if and only if $G$ is $\{P_4,C_4\}$-free graph.
\end{theorem}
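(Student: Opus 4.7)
The plan is to unfold the definition of $\atd(G)\leq 1$ and then invoke the characterization of quasi-threshold graphs cited just above the theorem statement. By definition, $\atd(G)\leq 1$ if and only if $G$ admits an elimination forest $F$ such that for every root-to-leaf path $P$ in $F$, the induced subgraph $G[V(P)]$ satisfies $\alpha(G[V(P)])\leq 1$, i.e.\@ $G[V(P)]$ is a clique. Conversely, if every root-to-leaf path of $F$ induces a clique in $G$, then each edge $uv\in E(G)$ lies on such a path (by definition of an elimination forest), so $v$ is an ancestor of $u$ or vice versa, and every such ancestor/descendant pair is adjacent in $G$. This is precisely the statement that $G$ is the transitive closure of $F$.

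Having established this equivalence, the theorem reduces directly to the characterization of quasi-threshold graphs from \cite{Yan1996Quasithreshold} recalled in the paragraph preceding the theorem: a graph is $\{P_4,C_4\}$-free if and only if it is the transitive closure of some rooted forest. Combining the two equivalences gives the conclusion.

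If a self-contained proof is preferred, I would proceed by induction on $|V(G)|$. For the forward direction, I would observe that neither $P_4$ nor $C_4$ can be the transitive closure of any rooted forest (in both cases any choice of a root would force a universal vertex, which neither $P_4$ nor $C_4$ has), and that being the transitive closure of a rooted forest is preserved under taking induced subgraphs, so $\atd(G)\leq 1$ forces $G$ to be $\{P_4,C_4\}$-free. For the backward direction, I would use the standard fact that a connected $\{P_4,C_4\}$-free graph on at least two vertices has a universal vertex $v$; then $G-v$ is again $\{P_4,C_4\}$-free, so by induction each component of $G-v$ admits an elimination forest of $\alpha$-depth at most $1$, and attaching their roots as children of $v$ produces the required elimination forest of $G$. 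The disconnected case is handled by taking the disjoint union of the elimination forests of the components.

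The main (and only) technical content is the existence of a universal vertex in any connected $\{P_4,C_4\}$-free graph, which is exactly the ingredient that makes the Yan--Chen--Chang argument work; since this result is already cited, I expect no obstacle beyond verifying the equivalence between $\atd(G)\leq 1$ and being a transitive closure of a rooted forest, which is essentially a tautology once the definitions are unfolded.
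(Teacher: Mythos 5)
Your proposal is correct and follows essentially the same route as the paper: the paper also observes that $\atd(G)\le 1$ is equivalent to $G$ being the transitive closure of a rooted forest and then invokes the characterisation of quasi-threshold graphs from Yan et al.\ to conclude. Your optional self-contained induction via universal vertices is a valid substitute for the citation but adds nothing the paper needs.
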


Next, we establish some basic properties of $\alpha$-treedepth.

\begin{observation}\label{obs:treedepth1}
If $H$ is an induced subgraph of $G$, then $\atd(H)\leq \atd(G)$.
\end{observation}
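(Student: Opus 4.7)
The plan is to take any elimination forest $F$ for $G$ of $\alpha$-depth $\atd(G)$ and restrict it to $V(H)$ in the standard way: delete every vertex of $V(G)\setminus V(H)$ and reattach each child of a deleted vertex to the nearest surviving ancestor (or make it a new root if none exists). This produces a rooted forest $F'$ on $V(H)$, and the key structural point I would verify is that the ancestor-descendant relation in $F'$ agrees with the restriction of the ancestor-descendant relation in $F$ to $V(H)$.

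From this, showing that $F'$ is an elimination forest for $H$ is immediate: in a rooted forest, two vertices share a root-to-leaf path if and only if one is an ancestor of the other, so for any $uv\in E(H)\subseteq E(G)$, the fact that $u$ and $v$ lie on a common root-to-leaf path in $F$ transfers to $F'$. To bound the $\alpha$-depth, I would observe that every root-to-leaf path $P'$ of $F'$ can be extended in $F$ to a root-to-leaf path $P$ with $V(P')\subseteq V(P)$ by prepending any ancestors of the root of $P'$ lying outside $V(H)$ and appending any descendants in $F$ of the leaf of $P'$. Since $H[V(P')]=G[V(P')]$ is an induced subgraph of $G[V(P)]$, every independent set of the former is independent in the latter, yielding $\alpha(H[V(P')])\leq \alpha(G[V(P)])\leq \atd(G)$. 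Taking the maximum over all root-to-leaf paths of $F'$ gives $\atd(H)\leq \atd(G)$. There is no real obstacle in this argument; it is a routine monotonicity verification that mirrors the analogous fact for classical treedepth.
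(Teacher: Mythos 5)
Your argument is correct: restricting an optimal elimination forest of $G$ to $V(H)$ while preserving the ancestor relation, and extending each root-to-leaf path of the restricted forest to one of the original forest, is exactly the routine verification intended here. The paper states this observation without proof, and your write-up supplies precisely the standard argument (mirroring the classical treedepth case), so there is nothing to add.
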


\begin{observation}\label{obs:treedepth2}
Let $G$ be a graph obtained from the disjoint union of $G_1$ and $G_2$. Then $\atd(G)=\max\{\atd(G_1),\atd(G_2)\}$.
\end{observation}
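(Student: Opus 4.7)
The plan is to show the two inequalities separately; both are essentially bookkeeping with elimination forests.

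For the inequality $\max\{\atd(G_1),\atd(G_2)\}\leq\atd(G)$, I would simply invoke \Cref{obs:treedepth1}: since $G$ is the disjoint union of $G_1$ and $G_2$, both $G_1$ and $G_2$ are induced subgraphs of $G$, so their $\alpha$-treedepth is at most $\atd(G)$.

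For the reverse inequality $\atd(G)\leq\max\{\atd(G_1),\atd(G_2)\}$, the idea is to glue together optimal elimination forests. Let $F_1$ be an elimination forest for $G_1$ witnessing $\atd(G_1)$ and $F_2$ an elimination forest for $G_2$ witnessing $\atd(G_2)$. Let $F$ be the rooted forest whose tree components are exactly the tree components of $F_1$ and of $F_2$, with the set of roots being the union of the roots of $F_1$ and $F_2$. Then $V(F)=V(G_1)\cup V(G_2)=V(G)$, and every edge $uv\in E(G)$ lies entirely in $E(G_1)$ or entirely in $E(G_2)$ (since $G$ is the disjoint union of $G_1$ and $G_2$); hence the required root-to-leaf path in $F$ containing both $u$ and $v$ is furnished by the corresponding path in $F_1$ or $F_2$. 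Thus $F$ is an elimination forest for $G$.

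It remains to check its $\alpha$-depth. Any root-to-leaf path $P$ in $F$ is a root-to-leaf path in exactly one of $F_1$ or $F_2$; say $P$ lies in $F_i$. Since $G[V(P)] = G_i[V(P)]$ (no edges of $G$ go between $V(G_1)$ and $V(G_2)$), we have $\alpha(G[V(P)])=\alpha(G_i[V(P)])\leq \atd(G_i)$. Taking the maximum over all root-to-leaf paths in $F$ gives $\atd(G)\leq \max\{\atd(G_1),\atd(G_2)\}$, finishing the proof. There is no real obstacle here; the observation is immediate from the definitions once the elimination forests are combined in the obvious way.
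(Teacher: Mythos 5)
Your proof is correct and is exactly the routine argument the paper has in mind: the paper states this as an observation without proof, and your two inequalities (monotonicity under induced subgraphs via \Cref{obs:treedepth1}, plus gluing optimal elimination forests and noting that each root-to-leaf path stays within one component) are the standard justification.
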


\begin{lemma}\label{lemma:twleqtd}
For every graph $G$ it holds that $\alpha\text{-}\mathsf{tw}(G)\leq \alpha\text{-}\mathsf{td}(G)$.
\end{lemma}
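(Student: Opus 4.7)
The plan is to take any elimination forest $(F,R)$ for $G$ of $\alpha$-depth $k := \atd(G)$ and convert it, via the standard forest-to-tree-decomposition recipe, into a tree-decomposition $(T,\beta)$ for $G$ of $\alpha$-width at most $k$. The guiding idea is that each bag should consist of the vertices on the root-to-$v$ path in $F$, so that the bag is contained in the vertex set of some root-to-leaf path of $F$ and therefore has $G$-independence number at most $k$ by definition of $\alpha$-depth.

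Concretely, I would first produce the underlying tree $T$: if $F$ is already a tree, set $T := F$; otherwise, introduce a fresh vertex $r^{\star}$ and make it adjacent to every root in $R$, so that $T$ is obtained from $F$ by adding $r^{\star}$ as a new common root. Then I set $\beta(r^{\star}) := \emptyset$, and for every $v \in V(G) = V(F)$, I let $\beta(v)$ be the vertex set of the unique path in $F$ from the root of the component of $F$ containing $v$ to $v$.

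Next, I would verify the three defining properties of a tree-decomposition. The covering property is immediate since $v \in \beta(v)$ for every $v \in V(G)$. For the edge property, let $uv \in E(G)$; by definition of an elimination forest, some root-to-leaf path of $F$ contains both $u$ and $v$, so one of them is an ancestor of the other, say $u$ of $v$, whence $\{u,v\} \subseteq \beta(v)$. For the connectivity property, fix $u \in V(G)$; then $u \in \beta(v)$ precisely when $u$ lies on the root-to-$v$ path in $F$, i.e.\ when $v$ is a descendant of $u$ (including $u$ itself), and the set of descendants of $u$ induces a subtree of $T$.

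Finally, the bound on $\alpha$-width is direct: for every $v \in V(G)$, the bag $\beta(v)$ is contained in the vertex set of some root-to-leaf path $P$ of $F$, so $\alpha(G[\beta(v)]) \le \alpha(G[V(P)]) \le k$ by the definition of $\alpha$-depth; and the empty bag $\beta(r^{\star})$ contributes nothing. Since the whole argument is a straightforward independence-version of the classical proof that $\mathsf{tw}(G) \le \mathsf{td}(G) - 1$, there is no real obstacle; the only mild point to attend to is the case when the elimination forest is disconnected, which is the reason for introducing the artificial root $r^{\star}$ (alternatively, one can invoke \Cref{obs:treedepth2} together with the analogous combinability of tree-decompositions).
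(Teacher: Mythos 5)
Your proof is correct. It shares the paper's key idea---each bag is the vertex set of a root-to-node path of the elimination forest, hence contained in a root-to-leaf path and therefore of independence number at most the $\alpha$-depth---but the underlying tree of the decomposition is different. You use the elimination forest itself (plus an artificial root in the disconnected case) as the decomposition tree, which is the textbook conversion; all three tree-decomposition axioms then follow in one line each, the connectivity condition being immediate because the set of nodes whose bag contains $u$ is exactly the set of descendants of $u$, a subtree. The paper instead restricts to connected $G$, takes only the \emph{leaves} of the elimination tree, arranges them on a path in depth-first-search order, and assigns to each leaf the full root-to-leaf path as its bag; this yields a \emph{path}-decomposition (so it implicitly proves the stronger statement that the path-analogue of tree-independence number is bounded by $\atd$), at the cost of a more delicate verification of the connectivity condition via the DFS ordering. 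Both arguments are valid; yours is the simpler one to check, the paper's gives marginally more.
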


\begin{proof}
It suffices to prove the claim for connected graphs, so we may assume $G$ to be connected.
Let $F$ be an elimination forest of minimum $\alpha$-depth for $G$.
Since $G$ is connected, $F$ is a tree.
Let $L$ be the set of all leaves of $F$ and let $\lambda$ be a linear ordering of the leaves obtained by the order of encountering them when following a depth-first search from the root of $F$.
Now let $P$ be a path with $V(P)=L$ such that $\lambda$ is exactly the order of the vertices of $L$ encountered when traversing along $P$ from one endpoint to the other.
For each $p\in V(P)$ we define $\beta(p)$ to be the set of all vertices contained in the unique root-to-$p$ path in $F$.
It follows that $\alpha(\beta(p))\leq \alpha\text{-}\mathsf{depth}(F)$.
So all that is left is to show that $(P,\beta)$ is a tree-decomposition for $G$.
The first two conditions from the definition of tree-decompositions hold trivially by the definition of $(P,\beta)$ and the fact that $F$ is an elimination tree for $G$.
Now suppose there exists a vertex $v\in V(G)$ and leaves $l_1,l_2,l_3$ of $F$ such that $\lambda(l_1)\leq \lambda(l_2)\leq \lambda(l_3)$ but $v\in\beta(l_1)\cap\beta(l_3)$ and $v\notin \beta(l_2)$.
Then $v$ is a common ancestor of $l_1$ and $l_3$ but not an ancestor of $l_2$.
Since we chose $\lambda$ to originate from a depth-first search, $v$ is an ancestor of $l_1$ but not of $l_2$, and $\lambda(l_1)\leq \lambda(l_2)$, all descendants of $v$ among the vertices in $L$ must come before $l_2$ in $\lambda$, which renders the assumption that $\lambda(l_2)\leq\lambda(l_3)$ absurd.
\end{proof}

\subsection{Main result on \texorpdfstring{$\alpha$-treedepth}{α-treedepth}}

The main result of this section is an asymptotic characterisation of $\alpha$-treedepth for $K_{1,d}$-free graphs.
The main technique for our proof is a type of argument that has become known in the community as the ``Gy\'arf\'as path argument''.
An argument reminiscent of this technique will later appear in \cref{sec:pathsandcyclesinbrambles} where we show that brambles of large $\alpha$-order imply the existence of induced cycles.
These types of arguments are very constructive.
In our case we obtain a greedy procedure to approximate $\alpha$-treedepth in $K_{1,d}$-free graphs.

\thmalphatreedepth*

Notice that \textsl{i)} implies, by \cref{obs:treedepth1}, that every graph that contains an induced $P_k$ has $\alpha$-treedepth at least $\log(k/3)$.
In this sense, \textsl{i)} acts as a lower bound on the $\alpha$-treedepth of graphs.
In the same sense, \textsl{ii)} acts as an upper bound on the $\alpha$-treedepth on $K_{1,d}$-free graphs without long induced paths.
Together these two statements provide an approximate duality theorem for the $\alpha$-treedepth of $K_{1,d}$-free graphs.

\begin{proof}[Proof of \Cref{thm:alphatreedepth}]
We prove the two assertions of \cref{thm:alphatreedepth} independently.
\smallskip

We begin with the lower bound, that is, we show that graphs with long induced paths have large $\alpha$-treedepth.

\textbf{Statement i)}

We will first prove that $\atd(P_{k})\leq \ell$ where $k=3\cdot (2^\ell-1)$ for some $\ell\in \mathbb{N}$.
We inductively construct $T_{k}$ as follows (see \cref{fig:atdofpath}).

\begin{figure}
    \centering
    \begin{tikzpicture}
      \tikzset{enclosed/.style={draw, circle, inner sep=0pt, minimum size=.15cm, fill=black}}
      \node[enclosed] (11) at (0,0) [label=left: 11] {};
      \node[enclosed] (10) at (-1.5,-1) [label=left: 10] {};
      \node[enclosed] (12) at (1.5,-1) [label=right: 12] {};
      \node[enclosed] (5) at (-3,-2) [label=left: 5] {};
      \node[enclosed] (17) at (3,-2) [label=right: 17] {};
      \node[enclosed] (4) at (-3.75,-3) [label=left: 4] {};
      \node[enclosed] (6) at (-2.25,-3) [label=right: 6] {};
      \node[enclosed] (16) at (2.25,-3) [label=left: 16] {};
      \node[enclosed] (18) at (3.75,-3) [label=right: 18] {};
      \node[enclosed] (2) at (-4.5,-4) [label=left: 2] {};
      \node[enclosed] (8) at (-1.5,-4) [label=right: 8] {};
      \node[enclosed] (14) at (1.5,-4) [label=left: 14] {};
      \node[enclosed] (20) at (4.5,-4) [label=right: 20] {};
      \node[enclosed] (1) at (-5.25,-5) [label=left: 1] {};
      \node[enclosed] (3) at (-3.75,-5) [label=right: 3] {};
      \node[enclosed] (7) at (-2.25,-5) [label=left: 7] {};
      \node[enclosed] (9) at (-0.75,-5) [label=right: 9] {};
      \node[enclosed] (13) at (0.75,-5) [label=left: 13] {};
      \node[enclosed] (15) at (2.25,-5) [label=right: 15] {};
      \node[enclosed] (19) at (3.75,-5) [label=left: 19] {};
      \node[enclosed] (21) at (5.25,-5) [label=right: 21] {};
      \draw (11) -- (10) node[] () {};
      \draw (11) -- (12) node[] () {};
      \draw (10) -- (5) node[] () {};
      \draw (12) -- (17) node[] () {};
      \draw (5) -- (4) node[] () {};
      \draw (5) -- (6) node[] () {};
      \draw (17) -- (16) node[] () {};
      \draw (17) -- (18) node[] () {};
      \draw (4) -- (2) node[] () {};
      \draw (6) -- (8) node[] () {};
      \draw (16) -- (14) node[] () {};
      \draw (18) -- (20) node[] () {};
      \draw (2) -- (1) node[] () {};
      \draw (2) -- (3) node[] () {};
      \draw (8) -- (7) node[] () {};
      \draw (8) -- (9) node[] () {};
      \draw (14) -- (13) node[] () {};
      \draw (14) -- (15) node[] () {};
      \draw (20) -- (19) node[] () {};
      \draw (20) -- (21) node[] () {};
    \end{tikzpicture}
    \caption{Optimal elimination tree for $\atd(P_{21})=3$}
    \label{fig:atdofpath}
\end{figure}

We identify $V(P_{k})$ with $[k]$, with two vertices $i,j\in [k]$ adjacent if and only if $|i-j| = 1$.
If $\ell=1$ (and $k=3$), $T_\ell$ is a $P_3$ with $2$ as a root.
It is easy to check that this is indeed an elimination tree of $P_3$, which implies that $\alpha\text{-}\mathsf{depth}(P_3)=1$.

Suppose now that $\ell\geq 2$.
Let $r\coloneqq(k+1)/2$ be the root vertex of $T_{\ell}$, and let $r-1$ and $r+1$ be its children.
Add two copies of $T_{\ell-1}$ by joining the root vertex of $T_{\ell-1}$ as a child to each of $r-1$ and $r+1$, adding $(k+3)/2$ to all labels of the copy of $T_{\ell-1}$ attached to $r+1$.
Then $T_\ell$ is an elimination tree of $P_{k}$.
Furthermore, since the vertex set of each root-leaf path of $T_\ell$ is the union of $\ell$ pairs of two consecutive vertices, the independence number of the corresponding induced subgraph of $P_{k}$ is at most $\ell$.
Hence, we have $\atd(P_{k})\leq \ell$.

For general $k$, \cref{obs:treedepth1} gives that $\atd(P_k)\leq \lceil \log\big(\frac{k}{3}+1\big) \rceil$ for all $k$.

\medskip
We now prove that $\atd(P_k)\geq \lceil \log\big(\frac{k}{3}+1\big) \rceil$ by induction on $k$.
For the base cases, if $k\in[3]$, it is easy to see that $\atd(P_k)=1$.
Hence, we may assume that $k\geq 4$.

Let us fix some notations on rooted forests.
For a rooted tree $(T,r)$ and $S\subseteq V(T)$, define $T(S)$ to be the rooted forest with vertex set $S$, in which two distinct vertices $x$ and $y$ are adjacent if and only if one of them is an ancestor of the other in $T$ and the unique $x,y$-path in $T$ contains no other vertex from $S$.
The set of roots of $T(S)$ is the set $R(S)\coloneqq \{ r\in S \colon \text{no ancestor of }r\text{ belongs to }S \}$.
Note that if $T$ is an elimination forest of a graph $G$, then $(T(S),R(S))$ is an elimination forest of $G[S]$.
For $s\in V(T)$, let $D(s)$ be the set consisting of $s$ and all its descendants in $(T,r)$.

Let $(T,r)$ be an optimal elimination tree for the path $P_k$ with vertex set $[k]$ witnessing $\atd(P_k)$.
Let $A=[r-1]$ and $B=[r+1,k]$.

First we claim that we may assume $r$ has exactly two children, and their sets of descendants are $A$ and $B$, respectively.
Suppose that $r$ has a child $s$ such that $D(s)\cap A\neq \emptyset$ and $D(s)\cap B\neq \emptyset$.
Then we can construct a new elimination tree $(T',r)$, by connecting $r$ to the roots of the elimination forests $T(D(s)\cap A)$ and $T(D(s)\cap B)$.
The vertex set of any root-to-leaf path of $(T',r)$ is a subset of a root-to-leaf path of $(T,r)$, so the $\alpha$-depth of $(T',r)$ is at most the $\alpha$-depth of $(T,r)$.
Hence, we may consider $(T',r)$ instead and therefore we may assume that for each child $s$, $D(s)\subseteq A$ or $D(s)\subseteq B$.

Next, suppose that $r$ has more than two children, say $v_1,\ldots,v_{\ell}$, where $\ell\geq 3$.
Without loss of generality, we may assume that $r-1\in D(v_1)$ and $D(v_1)\subsetneq A$ and, similarly, $r+1\in D(v_2)$ and $D(v_2)\subseteq B$.
Since $D(v_1)\subsetneq A$, there exists a child $v_j$ of $r$ with $j\ge 3$ such that $D(v_j)\subseteq A$; without loss of generality we may assume that $j =3$.
Let $s\coloneqq\max D(v_3)$.
Since $r-1\in D(v_1)$ we know that $s\leq r-2$ and thus in $P_k$, the vertex $r$ cannot be a neighbour of $s$ and $s$ has no neighbour in $B$.
It follows that $s+1\in A$ and thus, there is some $i\in[\ell]\setminus\{ 3\}$ such that $s+1\in D(v_i)\subseteq A$.
However, no root-to-leaf path contains the edge between $s$ and $s+1$, which contradicts the definition of elimination forests.
Hence, we may assume that $r$ has two children, and the sets corresponding to their descendant plus themselves are $A$ and $B$, respectively.

Next, we claim that we may assume $r-1$ and $r+1$ to be the two children of $r$.
Towards a contradiction, suppose this is not the case.
We construct a new elimination tree $(T',r)$ as follows: 
First, let $r-1$ and $r+1$ be the two children of $r$.
Attach the roots of $T(A\setminus \{r-1\})$ to $r-1$ and the roots of $T(B\setminus \{r+1\})$ to $r+1$ as descendants.
Consider a root-to-leaf path $P'$ of $T'$ containing $r-1$.
Its vertex set coincides with the vertex set of some root-to-leaf path $P$ in $T$, except that it may additionally contain $r-1$.
However, this does not change the independence number, since for any independent set $I$ in the subgraph of $P_k$ induced by $V(P')$ such that $r-1\in I$, replacing $r-1$ with $r$ in $I$ results in an equally large independent set in the subgraph of $P_k$ induced by $V(P)$.
This works in same way for the root-to-leaf paths containing $r+1$.
Therefore, the $\alpha$-depth does not increase under this operation.

The above assumptions on the structure of $(T,r)$ imply that $\atd(P_{r-2})\le \atd(P_k)-1$, and, similarly, that $\atd(P_{k-r-1})\le \atd(P_k)-1$.
Hence,
\begin{align*}
\atd(P_k)&\geq \max\{\atd(P_{r-2}),\atd(P_{k-r-1})\}+1\\
&= \atd(P_{\max\{r-2,k-r-1\}})+1\\
&\geq \atd\left(P_{\lceil(k-3)/2\rceil}\right)+1\,.
\end{align*}
We can now use induction on $k$ to deduce that $\atd(P_k)\ge \lceil \log\big(\frac{k}{3}+1\big) \rceil$.
\bigskip

\textbf{Statement ii)}

Without loss of generality we may assume $G$ to be connected.

Observe that if $d\leq1$ or $k\leq 2$, if $G$ is $\{P_k,K_{1,d} \}$-free, then $G$ has no edge, thus the bound $\atd(G)\leq 1$ holds trivially.
So we may assume that $d\geq 2$ and $k\geq 3$, thus $(d-1)(k-2)\geq 1$.

We construct an elimination tree $(T,r)$ of $G$ together with an induced path with the following induction on $k\geq 3$.

\textbf{Induction Hypothesis.}
For every integer $d\geq 2$ and $k\geq 3$, every connected $K_{1,d}$-free graph $G$, and every vertex $r\in V(G)$, there exists either an elimination tree $(T,r)$ of $G$ of $\alpha$-depth at most $(d-1)(k-2)$ or an induced path in $G$ on $k$ vertices with endpoint $r$.

\smallskip
Let us fix $d\geq 2$, and let $G$ be a connected $K_{1,d}$-free graph, and $r$ an arbitrary vertex of $G$.

Let us first show that the statement holds for $k=3$.
We want to exhibit either an induced $P_3$ with endpoint $r$ or an elimination tree $(T,r)$ of $G$ with $\alpha$-depth at most $d-1$.
Suppose that there is no induced $P_3$ with endpoint $r$.
Since $G$ is connected, this implies that $r$ is adjacent to every vertex of $G$, i.e. $N_G[r]=V(G)$.
Since $G$ is $K_{1,d}$-free, $\alpha (N_G[r])\leq d-1$, so $\alpha (G)\leq d-1$.
Therefore any elimination tree of $G$ has $\alpha$-depth at most $d-1$.

Assume now that $k\geq 4$.
Let $Q$ be a graph isomorphic to a path with vertex set $N_G[r]$ such that $r$ is an endpoint of $Q$ (the edge set of $Q$ is chosen arbitrarily to form that path, we will later use this path in the eventual construction of an elimination tree of $G$).
Let $J_1,\dots,J_{\ell}$ be the components of $G-N_G[r]$.
For every $i\in[\ell]$ we choose $r_i\in N_G(J_i)\subseteq N_G(r)$ such that $r_i$ is the furthest vertex from $r$ along $Q$, and set $G_i\coloneqq G[V(J_i)\cup\{r_i\}]$.

We apply the induction hypothesis for $k-1$ to $G_i$ for each $i\in[\ell]$.
If there exists $j\in[\ell]$ such that $r_j$ is an endpoint of an induced $P_{k-1}$, say $P$, in $G_j$, we have found an induced $P_k$ in $G$ since the only neighbour of $r$ in $P$ is exactly the endpoint $r_j$.
Hence, we may assume that for every $i\in[\ell]$ we find an elimination tree $(T_i,r_i)$ of $G_i$ with $\alpha$-depth at most $(d-1)(k-3)$.

We construct an elimination tree $(T,r)$ of $G$ as follow.
Recall that we constructed $Q$ with $V(Q)=N_G[r]$ and $r$ as an endpoint, so for each $i$, the vertex $r_i$ belongs to $Q$, and it is possible that there exists a $j\neq i$ such that $r_i$ and $r_j$ correspond to the same vertex in $Q$.
Let $T$ be the tree rooted in $r$ obtained from the union of $Q$ and $T_i$ for each $i\in[\ell]$. 

Then $(T,r)$ is an elimination tree for $G$ since $Q$ is a path, each $T_i$ is a subtree of $T$ rooted in $r_i$ and corresponds to an elimination tree of $G_i$, and the path $Q$ allows all the possible edges in $N_G[r]$.
Moreover, since we chose $r_i$ as far as possible from $r$ along $Q$, the set $N_G(J_i)$ corresponds to $r_i$ and its ancestors in $(T,r)$, allowing all the possible edges between $J_i$ and $N_G(J_i)$.
Finally, let us show that the $\alpha$-depth of $(T,r)$ is at most $(d-1)(k-2)$.
To see this, let $R$ be an arbitrary root-to-leaf path in $T$.
Then, by definition, there exists $i\in[\ell]$ such that $R-Q$ is a subpath of a root-to-leaf path $R'$ in $T_i'$.
Since $V(Q)=N_G[r]$ and $\alpha(N_G(r))\leq d-1$ we have that $\alpha(V(R))\leq \alpha(V(Q)) + \alpha(V(R'))\leq d-1 + (d-1)(k-3) = (d-1)(k-2)$.

\end{proof}

\section{Bramble duality for tree-independence number}\label{sec:duality}

Our goal is to prove an approximate duality between tree-independence number and a suitable notion of brambles that allows us to easily show lower bounds and provide us with additional information on the structure of graphs with large tree-independence number.
The notion of brambles we introduce here is slightly more restrictive than the usual definition (see, e.g., \cite{diestel2016graph}), hence we add the attribute \textsl{strong}.
We make this choice here because it allows slightly simpler arguments while additionally avoiding Menger's Theorem (which is used to prove the duality between brambles and treewidth, but cannot be adapted for independence number).
For a proof of bramble/treewidth duality see, e.g., \cite[Theorem 12.4.3]{diestel2016graph}.

Let $G$ be a graph.
We say that a set $X\subseteq V(G)$ is \emph{connected} if $G[X]$ is connected.

A \emph{strong bramble} in $G$ is a collection $\mathcal{B}=\{ B_1,\dots,B_n\}$ of connected sets $B_i\subseteq V(G)$ such that $B_i\cap B_j\neq\emptyset$ for all $i,j\in[n]$.

A \emph{cover} for a strong bramble $\mathcal{B}=\{ B_1,\dots,B_n\}$ is a set $X\subseteq V(G)$ such that $B_i\cap X\neq\emptyset$ for all $i\in[n]$.
The \emph{$\alpha$-order} of $\mathcal{B}$ is defined as $\min\{ \alpha(X) ~\!\colon\!~  X\subseteq V(G)\text{ is a cover for }\mathcal{B}\}$.

Our proof for \cref{thm:brambles} takes a slight detour through the concept of balanced separators and highly linked sets, i.\@e.\@ those kinds of sets that do not have such separators.
Our proofs for the results in this section follow along very well established strategies (see, e.g.\@ \cite{IsoldeThesis}).

Let $G$ be a graph and $X\subseteq V(G)$ be a set of vertices.
We say that a set $S\subseteq V(G)$ is an \emph{$\alpha$-balanced separator} for $X$ if every component $C$ of $G-S$ satisfies
$$\alpha(V(C)\cap X) \leq \frac{\alpha(X)}{2}.$$

Now let $k\geq 1$ be an integer and $G$ be a graph.
A set $X\subseteq V(G)$ is \emph{$k$-$\alpha$-linked} if there does not exist an $\alpha$-balanced separator $S$ for $X$ with $\alpha(S)\leq k$.

\begin{observation}\label{obs:uniquecomponent}
Let $k\geq 1$ be an integer and $G$ be a graph with a $k$-$\alpha$-linked set $X$.
If $S\subseteq V(G)$ with $\alpha(S)\leq k$ then $G-S$ has a unique component $C_S$ such that $\alpha(V(C_S)\cap X)>\frac{\alpha(X)}{2}$.
\end{observation}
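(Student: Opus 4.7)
The plan is to split the statement into an existence part and a uniqueness part, each obtained directly from the definitions.

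For existence, the observation is essentially a contrapositive of the definition of a $k$-$\alpha$-linked set. Since $X$ is $k$-$\alpha$-linked and $\alpha(S)\le k$, the set $S$ cannot be an $\alpha$-balanced separator for $X$. Unfolding the definition of such a separator, this means that at least one component $C$ of $G-S$ must satisfy $\alpha(V(C)\cap X)>\alpha(X)/2$. Denoting some such component by $C_S$ handles existence.

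For uniqueness, the idea is to exploit that distinct components of $G-S$ have no edges between them in $G$, so independent sets drawn from different components can be concatenated. Suppose for contradiction that there were two distinct components $C_S$ and $C_S'$ of $G-S$, each with $\alpha(V(C_S)\cap X)>\alpha(X)/2$ and $\alpha(V(C_S')\cap X)>\alpha(X)/2$. Pick maximum independent sets $I\subseteq V(C_S)\cap X$ and $I'\subseteq V(C_S')\cap X$ in $G$; both lie entirely in $X$ and have size strictly larger than $\alpha(X)/2$. Because no edge of $G$ has one endpoint in $C_S$ and the other in $C_S'$, the union $I\cup I'$ is an independent set in $G[X]$, and its size exceeds $\alpha(X)$. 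This contradicts the definition of $\alpha(X)$, so at most one such component can exist.

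There is no real obstacle here: the argument is a short two-step unpacking of the definitions, and the only ingredient beyond the definitions is the elementary fact that there are no edges between distinct components of $G-S$. I would present the two paragraphs above essentially verbatim as the proof.
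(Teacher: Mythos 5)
Your proposal is correct and matches the paper's proof essentially step for step: existence by unfolding the definition of a $k$-$\alpha$-linked set, and uniqueness by combining independent sets from two hypothetical components (which have no edges between them) to exceed $\alpha(X)$. No differences worth noting.
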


\begin{proof}
Since $X$ is $k$-$\alpha$-linked, there must exist at least one component $C$ of $G-S$ with $\alpha(V(C)\cap X)>\frac{\alpha(X)}{2}$.
Suppose $C$ is not unique and there exists another component $C'$ of $G-S$ satisfying the same inequality.
Then $C$ and $C'$ are disjoint and no vertex of $C$ is adjacent to a vertex of $C'$.
It follows that the union of any independent set in $C$ with any independent set in $C'$ forms an independent set in $G$.
Hence, we deduce that
\begin{align*}
    \alpha\big((V(C)\cup V(C'))\cap X \big) = \alpha(V(C)\cap X) + \alpha(V(C')\cap X) > \alpha(X),
\end{align*}
a contradiction.
\end{proof}

\begin{theorem}\label{thm:welllinkedduality}
Let $k\geq 1$ be an integer.
If a graph $G$ does not contain a $k$-$\alpha$-linked set, then $\alpha$-$\mathsf{tw}(G)\leq 2k+1$.
\end{theorem}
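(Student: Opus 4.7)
My plan is to give a recursive construction of a tree-decomposition of $G$ of $\alpha$-width at most $2k+1$.  To feed the recursion I strengthen the statement: for every disjoint pair $(W,U)\subseteq V(G)\times V(G)$ with $U$ a union of connected components of $G-W$ and $\alpha(W)\leq k+1$, there exists a tree-decomposition of $G[W\cup U]$ of $\alpha$-width at most $2k+1$ whose root bag contains $W$; the theorem then follows by taking $(W,U)=(\emptyset,V(G))$.  The hypothesis of having no $k$-$\alpha$-linked set transfers to each induced subgraph $G[W\cup U]$ encountered: a $k$-$\alpha$-linked set of $G[W\cup U]$ would lift to one of $G$ by restricting any candidate separator $S\subseteq V(G)$ to $S\cap(W\cup U)$, in the same spirit as the argument used in the proof of \cref{obs:uniquecomponent}.

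If $\alpha(W\cup U)\leq 2k+1$ the single bag $W\cup U$ already works.  Otherwise, since $U$ is not $k$-$\alpha$-linked, I pick an $\alpha$-balanced separator $S\subseteq V(G)$ of $U$ with $\alpha(S)\leq k$ and, after intersecting with $W\cup U$, may assume $S\subseteq W\cup U$.  The root bag is taken to be $B:=W\cup S$, which satisfies
\[
\alpha(B)\;\leq\;\alpha(W)+\alpha(S)\;\leq\;(k+1)+k\;=\;2k+1.
\]
For every connected component $D$ of $G[U\setminus S]$, I recurse on the pair $(W_D,V(D))$ with $W_D:=N_G(V(D))\cap B$ and attach the resulting subtree to $B$.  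Termination follows from the balancing property, which yields $\alpha(V(D))\leq\alpha(U)/2<\alpha(U)$, so the independence number of the second coordinate strictly decreases along each branch of the recursion.

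The crucial obstacle is to preserve the invariant $\alpha(W_D)\leq k+1$ on each child, since the only immediate bound is $\alpha(W_D)\leq\alpha(B)\leq 2k+1$, which is too weak.  I expect to resolve this by a more delicate choice of the separator that simultaneously controls the way $W$ is distributed among the children.  Concretely, the hypothesis is also available on auxiliary sets built from $W$, and combining a separator balancing $W$ with the one balancing $U$ should ensure that the portion of the old boundary surviving in each $W_D$ is proportionally reduced; together with the contribution from $S\cap N(V(D))$, controlled by $\alpha(S)\leq k$, this should deliver $\alpha(W_D)\leq k+1$.  Orchestrating this balancing while keeping $\alpha(B)$ within the budget $2k+1$ is the technical heart of the proof; the remaining verifications — that the glued object is indeed a tree-decomposition and covers all edges of $G[W\cup U]$ — are routine.
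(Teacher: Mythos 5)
Your overall strategy (a recursive construction driven by the balanced separators guaranteed by the absence of a $k$-$\alpha$-linked set) is the right one and matches the spirit of the paper's proof, but the gap you flag at the end is fatal as the argument stands, and your sketched fix does not close it. The invariant $\alpha(W)\leq k+1$ on the boundary cannot be maintained with the available tools. Whichever way you form the children, the new boundary $W_D$ inherits two contributions: a piece of the separator $S$, which already costs up to $\alpha(S)\leq k$, and a surviving piece of the old boundary $W$. To keep $\alpha(W_D)\leq k+1$ you would therefore need the surviving piece of $W$ to have independence number at most $1$. But an $\alpha$-balanced separator only controls $\alpha(V(C)\cap X)$ for the \emph{components} $C$, i.e.\ it halves the independence number (down to roughly $(k+1)/2$, not to $1$), and it says nothing at all about $\alpha(N(V(D))\cap W)$, which is the quantity you actually need when you take $D$ to be a component of $G[U\setminus S]$, since then $N(V(D))$ may meet all of $W$. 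Moreover, you apply the separator to the interior $U$, so even the halving is happening to the wrong set: it buys you termination but gives no control whatsoever over the boundary of the children.

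The paper resolves exactly this tension by changing both the invariant and the set being separated. The invariant is the weaker $\alpha(\text{adhesion})\leq 2k+1$, and the balanced separator is applied to the adhesion set $X=\beta(t)\cap\beta(\ell)$ itself (the old boundary), not to the interior. Then each child's share of the old boundary has independence number at most $\lfloor(2k+1)/2\rfloor= k$, the separator contributes at most $k$ more, and the remaining slack of $1$ is spent on one fresh, previously untreated vertex, giving a new internal bag and a new adhesion of independence number at most $2k+1$ again. Since the interior is no longer being halved, termination cannot come from a potential on $\alpha(U)$; instead the paper counts \emph{treated} vertices, observing that every refinement step either finishes a leaf outright or strictly increases the number of vertices lying in some bag of independence number at most $2k+1$ (namely, the fresh vertex just added). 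If you want to keep your recursive formulation, replace the invariant $\alpha(W)\leq k+1$ by $\alpha(W)\leq 2k+1$ with root bag equal to $W$ itself (or $W$ plus one new vertex), balance $W$ rather than $U$, and replace your termination potential accordingly; the arithmetic $k+k+1=2k+1$ is then exactly tight.
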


\begin{proof}
A tree-decomposition of \emph{partial $\alpha$-width} at most $2k+1$ for a graph $G$ is a tree-decomposition $(T,\beta)$ of $G$ with a root $r\in V(T)$ such that $\alpha(\beta(t))\leq 2k+1$ for every internal vertex $t\in V(T)$.
The set of \emph{treated vertices} of $(T,\beta)$, denoted by $B(T,\beta,r)$, is the set of all vertices $u$ of $G$ such that there is $t\in V(T)$ with $u\in\beta(t)$ and $\alpha(\beta(t))\leq 2k+1$.

We will now produce a tree-decomposition of $\alpha$-width at most $2k+1$ for $G$ by iteratively refining tree-decompositions of partial $\alpha$-width at most $2k+1$ while increasing the number of treated vertices of these tree-decompositions.
To start the inductive process, simply select a non-empty set $X\subseteq V(G)$ with $\alpha(X)\leq 2k+1$ and set $\beta(r)\coloneqq X$.
Then introduce a vertex $t$ and the edge $rt$ and set $\beta(t)\coloneqq V(G)$.
We thus have $V(T) = \{r,t\}$.
Observe that the resulting $(T,\beta)$ is indeed a tree-decomposition of $G$.
Moreover, the partial $\alpha$-width of $(T,\beta)$ is at most $2k+1$, every vertex of $X=\beta(r)$ is treated.

Now suppose we are given a tree-decomposition $(T,\beta)$ of $G$ of partial $\alpha$-width at most $2k+1$ with root $r$ and $n\geq 1$ treated vertices.
In case the $\alpha$-width of $(T,\beta)$ is at most $2k+1$, we are done.
Hence, we may assume that there exists a leaf $\ell\in V(T)$ such that $\alpha(\beta(\ell))>2k+1$.
Let $e=t\ell$ be the unique edge of $T$ incident with $\ell$.

If $\alpha(\beta(t)\cap\beta(\ell))<2k+1$, select an arbitrary untreated vertex $v\in \beta(\ell)\setminus \beta(t)$ and let $X\coloneqq (\beta(t)\cap\beta(\ell))\cup \{ v\}$.
Notice that by the choice of $\ell$, such a vertex $v$ must exist.
Then introduce a vertex $\ell'$ adjacent to $\ell$ and let $T'$ be the resulting tree.
We define $\beta'(t')\coloneqq \beta(t')$ for all $t'\in V(T)\setminus \{ \ell\}$, $\beta'(\ell)\coloneqq X$, and $\beta'(\ell')\coloneqq \beta(\ell)$.
Then, $(T',\beta')$ is a tree-decomposition of $G$, still rooted at $r$, such that the number of treated vertices of $(T',\beta')$ is strictly larger than the number of treated vertices of $(T,\beta)$.

Otherwise, we have $\alpha(\beta(t)\cap\beta(\ell))=2k+1$ and set $X\coloneqq \beta(t)\cap\beta(\ell)$.
Since $G$ does not contain a $k$-$\alpha$-linked set, $X$ in particular cannot be such a set and thus we can find a set $S\subseteq V(G)$ with $\alpha(S)\leq k$ such that for every component $C$ of $G-S$ we have $\alpha(V(C)\cap X)\leq\frac{\alpha(X)}{2}$.
Let $C_1,\dots,C_q$ be the components of $G-S$ that have a non-empty intersection with $\beta(\ell)$. 
Note that $q\geq 1$ since $\alpha(\beta(\ell)\setminus S)\geq \alpha(\beta(\ell))-\alpha(S)\geq 1$.
For each $i\in[q]$ let $X_i\coloneqq V(C_i)\cap X$ and let $R_i\coloneqq V(C_i)\cap\beta(\ell)$.
Let $S'\coloneq S\cap \beta(\ell)\cap\beta(t)$.
Notice that $X_i$ is a (possibly empty) subset of $R_i$ and $(R_i\cup S', (V(G)\setminus R_i)\cup X_i\cup S')$ is a separation of $G$ where
\[(R_i\cup S')\cap((V(G)\setminus R_i)\cup X_i\cup S')= X_i\cup S'\,.\]
To see this, note that $C_i$ is a component of $G-S$, and by the properties of tree-decompositions, no edge of $C_i-(\beta(\ell)\cap\beta(t))=C_i-X_i$ can have endpoints in both $\beta(\ell)$ and $V(G)\setminus\beta(\ell)$.
For the same reason, no vertex of $R_i\setminus X_i$ can have a neighbour in $S\setminus S'$.
Hence, $R_i\setminus X_i$ is precisely the union of the vertex sets of all components of $C_i-X_i$ with at least one vertex in $\beta(\ell)$.

Moreover, since any independent set in $X_i\cup S'$ decomposes into an independent set in $X_i$ and one in $S'$,
\[\alpha(X_i\cup S')\leq\alpha(X_i)+\alpha(S')\leq \alpha(X_i)+\alpha(S)\leq \left\lfloor \frac{2k+1}{2}\right\rfloor + k= 2k\,.\]
Now, for each $i\in[q]$, pick a vertex $v_i\in R_i\setminus X_i = R_i\setminus (X_i\cup S')$ if possible.
Notice that this implies that $v_i$ is an untreated vertex.
In case there does not exist such a $v_i$, we have $R_i=X_i$ and thus $\alpha(R_i\cup S')\leq 2k$.
For those $i\in[q]$ where this holds, we proceed as for all other cases without choosing $v_i$.
Simply note that in these cases, the resulting new leaves of our decomposition will have small weight with respect to $\alpha$, and in case $R_i=X_i$ for all $i\in[q]$ our procedure will stop as described below.

\begin{figure}
    \centering
    \begin{tikzpicture}[scale=1.5]
    \pgfdeclarelayer{background}
    \pgfdeclarelayer{foreground}
    \pgfsetlayers{background,main,foreground}
    \begin{pgfonlayer}{background}
        \pgftext{\includegraphics[width=10cm]{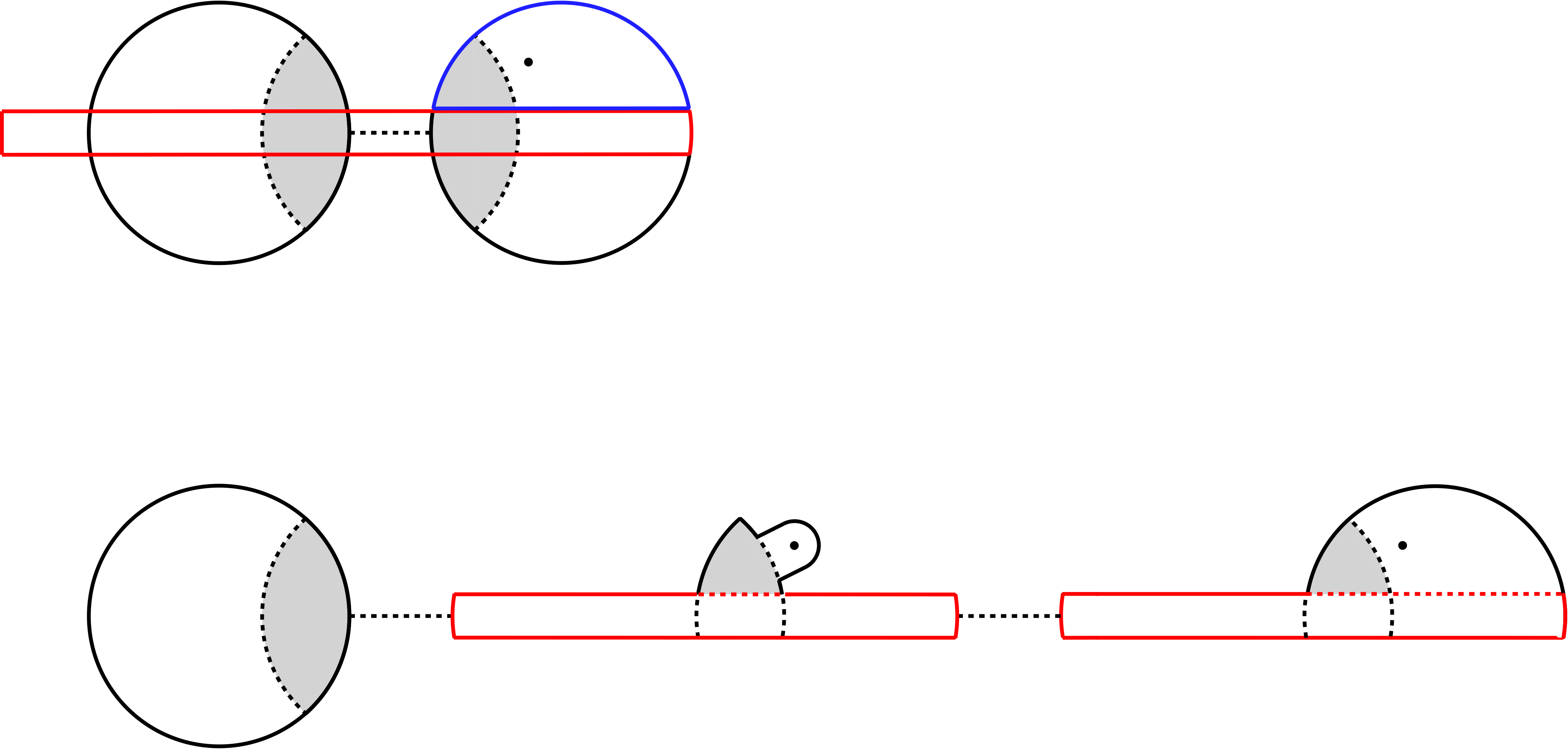}} at (C.center);
    \end{pgfonlayer}{background}
    \begin{pgfonlayer}{main}
        \node[]() at (-1.5,2.1) {$v_i$} ;
        \node[]() at (-5.15,1.55) {\color{red}{$S$}} ;
        \node[]() at (-3.55,2.6) {$\beta(t)$} ;
        \node[]() at (-1.3,2.6) {$\beta(\ell)$} ;
        \node[]() at (-2.75,2.1) {$X$} ;
        \node[]() at (-3.55,-0.5) {$\beta'(t)$} ;
        \node[]() at (-0.5,-0.5) {$\beta'(t_i)=Y_i$} ;
        \node[]() at (-0.86,-1.53) {\color{red}{$S'$}} ;
        \node[]() at (-0.28,-1.25) {$X_i$} ;
        \node[]() at (0.28,-0.92) {$v_i$} ;
        \node[]() at (3.4,-0.5) {$\beta'(\ell_i)$} ;
        \node[]() at (4.07,-1) {$v_i$} ;
        \node[]() at (3.61,-1.25) {$X_i$} ;
        \node[]() at (3,-1.53) {\color{red}{$S'$}} ;
        \node[]() at (-0.5,2.1) {\color{blue}{$R_i$}} ;
        \node[]() at (-2.5,0) {\Huge$\Downarrow$} ;
    \end{pgfonlayer}{main}
    \begin{pgfonlayer}{foreground}
    \end{pgfonlayer}{foreground}
\end{tikzpicture}
    \caption{Construction of a tree-decomposition in \Cref{thm:welllinkedduality}}
    \label{fig:treatedvertices}
\end{figure}

Now, for every $i\in[q]$, let $Y_i\coloneqq \{ v_i\}\cup X_i\cup S'$ (or $Y_i\coloneqq X_i\cup S'$ in case no $v_i$ could be chosen).
Observe that $\alpha(Y_i)\leq 2k+1$.
Finally, for each $i\in[q]$ such that $v_i$ was chosen, introduce a vertex $t_i$ and a vertex $\ell_i$ together with the edges $tt_i$ and $t_i\ell_i$, let $T'''$ be the resulting tree, still with root $r$.
Similarly, for each $i\in[q]$ where $v_i$ could not be chosen, introduce only the vertex $\ell_i$ together with the edge $t\ell_i$ and let $T''$ be the tree obtained from $T'''$ in this way.
In a last step, let $T'\coloneqq T''-\ell$ and set $\beta'(t_i)\coloneqq Y_i$ and $\beta'(\ell_i)\coloneqq R_i\cup Y_i$ where $v_i$ was chosen and $\beta'(\ell_i)\coloneqq Y_i$ otherwise.
For all $t'\in V(T)\setminus \{\ell\}$ set $\beta'(t')\coloneqq \beta(t')$.
See \cref{fig:treatedvertices} for an illustration of this construction.
Then $(T',\beta')$ is a tree-decomposition of $G$ of partial $\alpha$-width at most $2k+1$.
To see this, recall that $(R_i\cup S', (V(G)\setminus R_i)\cup X_i\cup S')$ is a separation of $G$, with separator $X_i\cup S'$, which is equal to $\beta'(t)\cap\beta'(t_i)$ if $v_i$ is chosen, and to $\beta'(t)\cap\beta'(\ell_i)$ otherwise. 
Notice that $\beta'(\ell_i)=R_i\cup S'$ in both cases and $X_i\cup S'\subseteq\beta'(t_i)=Y_i\subseteq \beta'(\ell_i)$ in case $v_i$ was chosen.

Furthermore, the number of treated vertices increases.
Indeed, if there is no $i\in[q]$ such that $v_i$ is chosen, we have $\alpha(\beta'(\ell_i)) = \alpha(X_i\cup S')\leq 2k$ for all $i\in[q]$ and thus $|B(T',\beta',r)|>|B(T,\beta,r)|$ since $\alpha(\beta(\ell))>2k+1$.
Hence, we may assume that there exists some $i\in[q]$ such that $v_i\notin B(T,\beta,r)$ was selected for the bag of the internal vertex $t_i$.
Thus, again we have $|B(T',\beta',r)|>|B(T,\beta,r)|$ and our proof is complete.
\end{proof}


The proof of \Cref{thm:brambles} now consists of two steps.
First, we show that the existence of a strong bramble of large $\alpha$-order forces large tree-independence number.
Second, assume that a given graph $G$ has large tree-independence number, namely, at least $4k-2$.
By \Cref{thm:welllinkedduality} this implies that $G$ contains a $(2k-2)$-$\alpha$-linked set for some $k$.
Finally, notice that for each $S\subseteq V(G)$ with independence number at most $k-1$, $G-S$ has a unique component $J_S$ whose intersection with $S$ has a large independent set.
The collection $\mathcal{B}$ of all of those components $J_S$ can then be observed to be the strong bramble of $\alpha$-order at least $k$, as desired.

\thmbrambles*

We begin with the first step.

\begin{lemma}\label{lemma:bramble_from_linked_set}
Let $k\geq 1$ be an integer.
If a graph $G$ contains a $(2k-2)$-$\alpha$-linked set, then it contains a strong bramble of $\alpha$-order at least $k$.
\end{lemma}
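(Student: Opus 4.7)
The plan is to construct the desired strong bramble directly from the linked set. Let $X$ be a $(2k-2)$-$\alpha$-linked set in $G$. For every subset $S \subseteq V(G)$ with $\alpha(S)\leq k-1$, we have $\alpha(S)\leq 2k-2$, so \Cref{obs:uniquecomponent} yields a unique component $J_S$ of $G-S$ with $\alpha(V(J_S)\cap X)>\alpha(X)/2$. Define
\[
\mathcal{B}\coloneqq\{V(J_S) \colon S\subseteq V(G),\ \alpha(S)\leq k-1\}\,.
\]
Each member of $\mathcal{B}$ is connected by construction. Before verifying the other bramble properties, I would note that $X$ itself must satisfy $\alpha(X)\geq 1$: otherwise $X=\emptyset$, and $\emptyset$ would be an $\alpha$-balanced separator for $X$ of independence number $0\leq 2k-2$, contradicting the assumption that $X$ is $(2k-2)$-$\alpha$-linked. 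Consequently, each $V(J_S)$ is nonempty (as it has nontrivial $\alpha$-intersection with $X$).

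The key point to verify is pairwise intersection. Given any two sets $S,S'\subseteq V(G)$ with $\alpha(S),\alpha(S')\leq k-1$, consider $T\coloneqq S\cup S'$. Then $\alpha(T)\leq \alpha(S)+\alpha(S')\leq 2k-2$, so the component $J_T$ of $G-T$ is well-defined by \Cref{obs:uniquecomponent}. Since $V(J_T)\subseteq V(G)\setminus S$ is connected in $G-S$, it is contained in some component $C$ of $G-S$. Because $\alpha(V(C)\cap X)\geq \alpha(V(J_T)\cap X)>\alpha(X)/2$, the uniqueness part of \Cref{obs:uniquecomponent} forces $C=J_S$; hence $V(J_T)\subseteq V(J_S)$. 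The symmetric argument gives $V(J_T)\subseteq V(J_{S'})$, so
\[
V(J_S)\cap V(J_{S'})\supseteq V(J_T)\neq \emptyset\,.
\]

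Finally, to see that the $\alpha$-order is at least $k$, I would argue by contradiction. Suppose $Y\subseteq V(G)$ is a cover for $\mathcal{B}$ with $\alpha(Y)\leq k-1$. Then $Y$ itself is an admissible index, so $V(J_Y)\in\mathcal{B}$, and the cover condition yields $Y\cap V(J_Y)\neq \emptyset$. This contradicts $V(J_Y)\subseteq V(G)\setminus Y$. Hence every cover of $\mathcal{B}$ has independence number at least $k$, completing the proof. The only conceptual step is the uniqueness transfer between $J_T$ and $J_S$, $J_{S'}$; everything else is bookkeeping on $\alpha$ and components.
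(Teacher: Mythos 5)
Your proposal is correct and follows essentially the same route as the paper: the same bramble $\{V(J_S) : \alpha(S)\leq k-1\}$, the same pairwise-intersection argument via the union $S\cup S'$ and the uniqueness of the large component, and the same cover argument using $J_Y$ itself. The extra remarks on $\alpha(X)\geq 1$ and nonemptiness are harmless bookkeeping that the paper leaves implicit.
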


\begin{proof}
Let $X$ be a $(2k-2)$-$\alpha$-linked set in $G$.
Then, for every $S\subseteq V(G)$ with $\alpha(S)\leq 2k-2$ there exists a component $C_S$ of $G-S$ with $\alpha(V(C_S)\cap X)>\frac{\alpha(X)}{2}$.
Notice that $C_S$ is uniquely determined by $S$ due to \cref{obs:uniquecomponent}.
Let $\mathcal{B}\coloneqq\{ V(C_S) \colon S\subseteq V(G),~\alpha(S)\leq k-1 \}$.
We claim that $\mathcal{B}$ is a strong bramble of $\alpha$-order at least $k$.
For this to be true we need to show two things: the members of $\mathcal{B}$ pairwise intersect, and they cannot be covered by a set $Y$ with $\alpha(Y)\leq k-1$.

Let us begin with the pairwise intersection.
Consider any two sets $A,B\subseteq V(G)$ with $V(C_A),V(C_B)\in\mathcal{B}$.
Since $V(C_A),V(C_B)\in\mathcal{B}$ we have $\max\{\alpha(A),\alpha(B)\}\leq k-1$ and thus, since any independent set in $A\cup B$ can be partitioned into an independent set of $A$ and one of $B$, $\alpha(A\cup B)\leq 2k-2$.
Hence, the component $C_{A\cup B}$ of $G-A-B$ as above is defined.
For both $Z\in\{A,B\}$, every component of $G-A-B$ is contained in some component of $G-Z$.
Moreover, by the uniqueness of $C_Z$ it follows that $V(C_{A\cup B})\subseteq V(C_Z)$.
Hence, $\emptyset\neq V(C_{A\cup B})\subseteq V(C_A)\cap V(C_B)$.

To see that no set $A$ with $\alpha(A)\leq k-1$ can intersect all members of $\mathcal{B}$ simply observe that, by definition of $\mathcal{B}$, for every such $A$ there exists a component $C_A$ of $G-A$ with $V(C_A)\in\mathcal{B}$.
It follows that the order of $\mathcal{B}$ must be at least $k$, as desired.
\end{proof}

Now we are ready to proceed with the proof of \cref{thm:brambles}.

\begin{proof}[Proof of \cref{thm:brambles}]
We first prove point \textsl{ii)} of the assertion.
For this suppose $\alpha$-$\mathsf{tw}(G)\geq 4k-2$.
It follows from \cref{thm:welllinkedduality} that there must exist some set $X\subseteq V(G)$ such that $X$ is $(2k-2)$-$\alpha$-linked, as otherwise every $X\subseteq V(G)$ would have an $\alpha$-balanced separator with independence number at most $2k-2$ and so we would have that $\alpha$-$\mathsf{tw}(G)\leq 2(2k-2)+1 = 4k-3 < 4k-2$, which is absurd.
By \cref{lemma:bramble_from_linked_set} this means that there is a strong bramble of $\alpha$-order at least $\frac{2k-2}{2}+1=k$ in $G$.

What is left is to prove \textsl{i)}.
For this, suppose towards a contradiction that $G$ has a strong bramble $\mathcal{B}$ of $\alpha$-order at least $k$ and a tree-decomposition $(T,\beta)$ of $\alpha$-width at most $k-1$.
Let $t_1t_2\in E(T)$ be any edge and for each $i\in[2]$ let $T_i$ be the component of $T-t_1t_2$ that contains $t_i$.
Moreover, let $X_i\coloneqq \bigcup_{t\in V(T_i)}\beta(t)\setminus (\beta(t_1)\cap\beta(t_2))$.
Then, the axioms of tree-decompositions imply that $X_1\cap X_2=\emptyset$.
Since $\alpha(\beta(t_1)\cap\beta(t_2))\leq k-1$, the set $\beta(t_1)\cap\beta(t_2)$ cannot be a cover for $\mathcal{B}$.
So there must be some $B\in\mathcal{B}$ such that $\beta(t_1)\cap \beta(t_2)\cap B=\emptyset.$
As $B$ is connected, it follows from $(T,\beta)$ being a tree-decomposition that there exists a unique $j\in[2]$ such that $B\subseteq X_j$.
Moreover, as $B$ intersects any $B'\in\mathcal{B}$ with $B'\cap\beta(t_1)\cap\beta(t_2)=\emptyset$, every such $B'$ must be contained in $X_j$.

Using this information, we may define an orientation $\vec{T}$ of $T$ such that $(t_1,t_2)\in E(\vec{T})$ if and only if $B\subseteq X_2$ for all $B\in\mathcal{B}$ that are disjoint from $\beta(t_1)\cap\beta(t_2)$.
Now observe that no vertex $t$ of $\vec{T}$ can be incident with two outgoing edges $(t,t_1)$ and $(t,t_2)$.
To see this let $Y_i$ be the union of all bags in the component of $T-tt_i$ that contains $t_i$ without the vertices of $\beta(t)\cap \beta(t_i)$.
Then for each $i\in[2]$, by definition of $\vec{T}$, there must be some $B_i\in\mathcal{B}$ with $B_i\subseteq Y_i$.
Hence, there is a vertex $t\in V(\vec{T})$ such that all incident edges are incoming for $t$.
It follows from the definition of $\vec{T}$ that $B\cap \beta(t)\neq\emptyset$ for all $B\in\mathcal{B}$, and therefore $\beta(t)$ is a cover for $\mathcal{B}$.
However, by our assumption on the $\alpha$-width of $(T,\beta)$, we have that $\alpha(\beta(t))\leq k-1$, which contradicts the assumption that $\mathcal{B}$ has $\alpha$-order at least $k$.
\end{proof}

\section{Dominating paths and cycles for strong brambles}\label{sec:pathsandcyclesinbrambles}

In this section we show that there is always an induced path and, in most cases, an induced cycle such that for each of them, their closed neighbourhood dominates a given strong bramble $\mathcal{B}$.
The proof comes in two steps.
First we show that one can, in essence, mimic the Gy\'arf\'as path argument (see~\cite{MR951359}) to inductively grow a path $P$ until all members of $\mathcal{B}$ intersect the closed neighbourhood of $P$.
In a second step we show that $P$ can be chosen in such a way that it can be closed to an induced cycle.
This cycle will later be used as a starting point to find a desired wheel graph as an induced minor.
Furthermore, if we assume that the graph is $K_{1,d}$-free and the strong bramble $\mathcal{B}$ has large $\alpha$-order, then we can guarantee that the path or cycle must be long.
We then show that this implies \Cref{cor:longdominatingholes}.

\begin{lemma}\label{lemma:dominatingpath}
    Let $\mathcal{B}$ be a strong bramble in a graph $G$.
    Then, there is an induced path $P$ in $G$ such that $N[P]\cap B\neq \emptyset$ for all $B\in \mathcal{B}$.
\end{lemma}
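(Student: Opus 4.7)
The plan is to mimic the \emph{Gy\'arf\'as path argument} by inductively growing an induced path. Fix an arbitrary $B_0\in\mathcal{B}$ and a vertex $v_0\in B_0$. Among all induced paths in $G$ that start at $v_0$, select one, say $P=v_0v_1\dots v_k$, that maximises the quantity $|\mathcal{B}(P)|$, where $\mathcal{B}(P)\coloneqq\{B\in\mathcal{B}\colon N[V(P)]\cap B\neq\emptyset\}$. The goal is to show that $\mathcal{B}(P)=\mathcal{B}$, which is exactly the statement of the lemma.

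Assume for contradiction that there exists $B^*\in\mathcal{B}\setminus\mathcal{B}(P)$, so that $B^*\cap N[V(P)]=\emptyset$. Since $\mathcal{B}$ is strong, $B^*\cap B_0\neq\emptyset$, and since $B_0$ is connected and contains $v_0\in V(P)$, there is a path in $G$ from $V(P)$ to $B^*$. Choose a shortest such path $Q=q_0q_1\dots q_t$ with $q_0\in V(P)$ and $q_t\in B^*$. Because $B^*\cap N[V(P)]=\emptyset$ we have $t\ge 2$; because $Q$ is shortest, it is induced and every vertex $q_i$ with $i\ge 2$ satisfies $q_i\notin N[V(P)]$.

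The main step is to use $Q$ to construct an induced path $P'$ that starts at $v_0$, witnesses $B^*\in\mathcal{B}(P')$, and satisfies $\mathcal{B}(P')\supseteq\mathcal{B}(P)$, which would give $\mathcal{B}(P')\supsetneq\mathcal{B}(P)$ and contradict the maximality of $|\mathcal{B}(P)|$. The path $P'$ is obtained by splicing $Q$ into $P$ at a carefully chosen index: letting $j$ be the smallest index such that $v_j$ is adjacent to $q_1$, we set $P'=v_0v_1\dots v_jq_1q_2\dots q_t$. The minimality of $j$ together with the fact that $q_i\notin N[V(P)]$ for $i\ge 2$ guarantees that $P'$ is induced, and $q_t\in B^*$ ensures $B^*\in\mathcal{B}(P')$.

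The hard part is to verify that $\mathcal{B}(P')\supseteq\mathcal{B}(P)$, i.e.\ that the splicing does not drop any bramble element from $\mathcal{B}(P)$ when we discard the tail $v_{j+1},\dots,v_k$. The strong bramble property is crucial here: any $L\in\mathcal{B}(P)\setminus\mathcal{B}(P')$ would have to intersect $B^*$, and the connectedness of $L$ and $B^*$ together with their positions relative to $N[V(P)]$ should force $L$ to meet $N[V(P')]$. Making this last step rigorous is the main obstacle, and may require refining the choice of $Q$ (for instance by minimising a secondary measure such as the total number of discarded vertices), or strengthening the selection criterion on $P$ beyond just maximising $|\mathcal{B}(P)|$ to track length as well.
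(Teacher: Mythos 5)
Your proposal has a genuine gap, and it is exactly the one you flag at the end: the claim $\mathcal{B}(P')\supseteq\mathcal{B}(P)$ (or even the weaker $|\mathcal{B}(P')|>|\mathcal{B}(P)|$) is not established, and the heuristic you offer for it does not go through. Take $L\in\mathcal{B}(P)$ that meets $N[V(P)]$ only via the discarded tail $v_{j+1},\dots,v_k$. The strong bramble property gives $L\cap B^*\neq\emptyset$, but the witness $w\in L\cap B^*$ may lie deep inside $B^*$, far from $q_t$; connectedness of $L$ and of $B^*$ then tells you nothing about $L$ meeting $N[V(P')]$, since $L$ is free to wander back to the tail without ever approaching $v_0,\dots,v_j$ or $Q$. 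So the splice can lose elements of $\mathcal{B}(P)$ while gaining only $B^*$, and the extremal choice of $P$ is not contradicted. Your suggested fixes (a secondary minimisation on $Q$, or tracking length) do not obviously repair this, because the problem is with the tail of $P$, not with $Q$.

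The paper's proof avoids discarding anything: it only ever \emph{extends} the current induced path, so domination is monotone and no exchange argument is needed. The price is a stronger invariant maintained throughout: the endpoint $x_i$ of the current path $P^i$ lies in $N(B_i)$ for a bramble element $B_i$ whose \emph{only} neighbour on $P^i$ is $x_i$. When some $B_{i+1}$ is undominated, one routes a shortest path $Q_i$ from $x_i$ through $B_i$ to $N(B_{i+1})$ (possible since $B_i\cap B_{i+1}\neq\emptyset$ forces $B_i\cap N(B_{i+1})\neq\emptyset$); the invariant guarantees $P^i\cup Q_i$ is induced, and minimality of $Q_i$ re-establishes the invariant with $B_{i+1}$ in place of $B_i$. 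If you want to salvage your approach, the cleanest route is to adopt this invariant rather than the global maximisation of $|\mathcal{B}(P)|$: the strong bramble property is used to guarantee the next extension exists, not to argue about what a splice preserves.
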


\begin{proof}
    We may assume that $G$ is connected. 
    Furthermore, we may assume that $|\mathcal{B}|\geq 2$ and there are no two distinct $B_1,B_2\in \mathcal{B}$ such that $B_1\subseteq B_2$.
    We construct a desired path as follows.
    First, choose $B_0\in \mathcal{B}$. 
    By our assumption on bramble, $B_0\neq V(G)$, so we can choose $x_0\in N(B_0)$ to be the starting vertex of the path.
    At the start, our path $P^0$ contains only one vertex $x_0$. 
    We will extend our path in several steps to obtain a sequence of nested induced paths $P^0\subseteq P^1\subseteq\ldots \subseteq P^k$ ending in the desired path $P$ such that for each $i \in \{0,1,\ldots, k\}$, the endpoints of the path $P^i$ are $x_0$ and $x_i$ such that $x_i$ is in the neighbourhood of some strong bramble element $B_i$ that is not adjacent to any other vertex of $P^{i}$. 
    Note that $P^0$ and $B_0$ satisfy the hypothesis.
    Assume that we have already constructed induced paths $P^0\subseteq \ldots \subseteq P^i$ for some $i\ge 0$, along with the corresponding bramble elements $B_0,\ldots, B_i$.
    If $N[P^i]\cap B\neq \emptyset$ for all $B\in \mathcal{B}$, then we take $P = P^i$ as our desired path and stop the procedure.
    Otherwise, there exists a bramble element $B_{i+1}\in \mathcal{B}$ such that $B_{i+1}\cap N[P^i] = \emptyset$.
    In this case, we show how to extend $P^i$ to a longer induced path $P^{i+1}$ such that the endpoint $x_{i+1}$ of $P^{i+1}$ other than $x_0$ is the only vertex of $P^{i+1}$ that is in the neighbourhood of $B_{i+1}$. (See \Cref{fig:findpath}.)

\begin{figure}
    \centering
    \begin{tikzpicture}[scale=1.5]
    \pgfdeclarelayer{background}
    \pgfdeclarelayer{foreground}
    \pgfsetlayers{background,main,foreground}
    \begin{pgfonlayer}{background}
        \pgftext{\includegraphics[width=5cm]{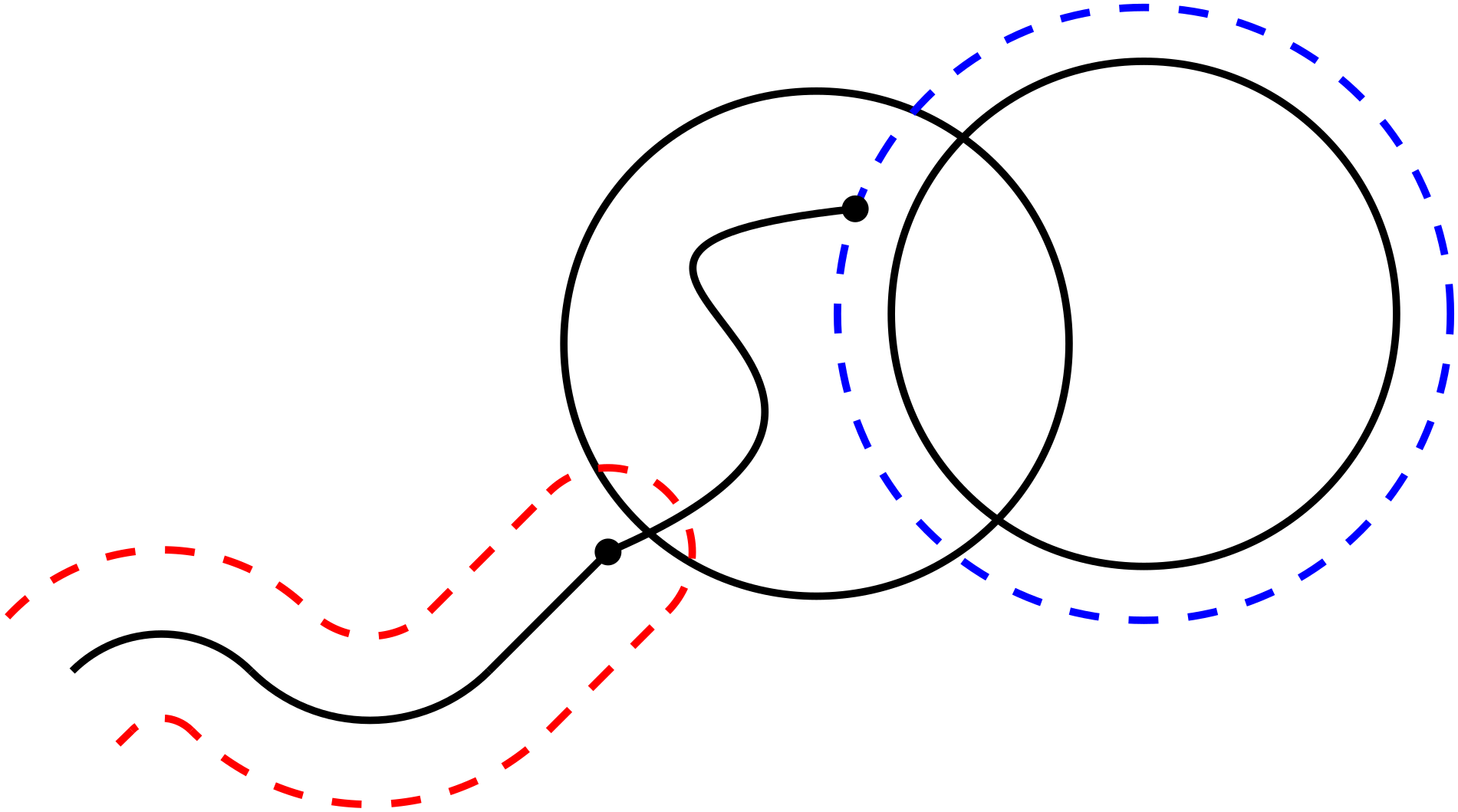}} at (C.center);
    \end{pgfonlayer}{background}
    \begin{pgfonlayer}{main}
        \node[]() at (-2.45,-1) {$P^i$} ;
        \node[]() at (-2,-0.27) {$\color{red}{N[P^i]}$} ;
        \node[]() at (-0.33,-0.65) {$x_i$} ;
        \node[]() at (0.17,0.78) {$x_{i+1}$} ;
        \node[]() at (-0.22,0.27) {$Q_i$} ;
        \node[]() at (-0.5,1) {$B_i$} ;
        \node[]() at (1.86,0.66) {$B_{i+1}$} ;
        \node[]() at (2.8,1) {$\color{blue}{N[B_{i+1}]}$} ;
    \end{pgfonlayer}{main}
    \begin{pgfonlayer}{foreground}
    \end{pgfonlayer}{foreground}
\end{tikzpicture}
    \caption{Extending an induced path through a bramble}
    \label{fig:findpath}
\end{figure}

    Since $B_{i}$ is connected and $B_{i+1}$ does not include $B_{i}$, we have that $B_{i}\cap N(B_{i+1})\neq \emptyset$.
    Let $Q_i$ be a shortest path from $x_i$ to $N(B_{i+1})$ such that $Q_i-x_i$ is included in $B_i$.
    Such a path exists since $B_i$ is connected and contains a neighbour of $x_i$ and a neighbour of $B_{i+1}$.
    Note that since $x_i$ has no neighbours in $B_{i+1}$, the minimality of $Q_i$ implies that no vertex of $Q_i$ belongs to $B_{i+1}$.
    Furthermore, $Q_i$ has no neighbour in $P^i$ other than $x_i$ by definition of $B_i$.
    Hence, we can define $P^{i+1}$ as the union $P^i\cup Q_i$.
    Indeed, this path is induced and the minimality of $Q_i$ implies that $x_{i+1}$ is the only vertex of $P^{i+1}$ that has a neighbour in $B_{i+1}$. 
    
    Since $G$ is a finite graph and each extension step adds at least one vertex to construction, the above process must terminate, and we obtain a desired path.
\end{proof}

\begin{theorem}\label{lemma:dominatingcycle}
    Let $\mathcal{B}$ be a strong bramble in a graph $G$.
    Then, $G$ contains either a vertex $v$ such that $N[v]\cap B\neq \emptyset$ for all $B\in \mathcal{B}$, or an induced cycle $C$ such that $N[C]\cap B\neq \emptyset$ for all $B\in \mathcal{B}$.
\end{theorem}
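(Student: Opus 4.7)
The plan is to apply \Cref{lemma:dominatingpath} to obtain an induced path whose closed neighbourhood meets every $B\in\mathcal{B}$, choose such a path with the smallest possible number of vertices, and then either return its unique vertex (if the path has only one) or close it into an induced cycle through the bramble.

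Write $P=x_0x_1\cdots x_k$ for this minimum path. If $k=0$, the vertex $x_0$ witnesses the first alternative. Otherwise $k\geq 1$, and minimality forces the existence of bramble elements $B^{L},B^{R}\in\mathcal{B}$ with $B^{L}\cap N[P-x_0]=\emptyset$ and $B^{R}\cap N[P-x_k]=\emptyset$: if no such $B^{L}$ existed, then $N[P-x_0]$ would already dominate $\mathcal{B}$, contradicting minimality. From these two disjointness conditions one directly extracts three properties: $V(P)\cap(B^{L}\cup B^{R})=\emptyset$, no vertex of $B^{L}\cup B^{R}$ is adjacent to any internal vertex $x_1,\ldots,x_{k-1}$ of $P$, and $x_0\in N(B^{L})$ while $x_k\in N(B^{R})$.

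To close $P$, I use that $\mathcal{B}$ is a \emph{strong} bramble: $B^{L}\cap B^{R}\neq\emptyset$, so $B^{L}\cup B^{R}$ is connected. I then define $R$ to be a shortest path from $x_0$ to $x_k$ in $G$ whose internal vertices lie in $B^{L}\cup B^{R}$ and whose interior is nonempty. Such an $R$ exists: pick $w^{L}\in B^{L}\cap N(x_0)$ and $w^{R}\in B^{R}\cap N(x_k)$ and connect them through $B^{L}\cup B^{R}$, noting that the three properties above force $w^{L}\neq w^{R}$. I then verify that $C\coloneqq P\cup R$ is an induced cycle: $P$ and $R$ are induced on their own, the three properties above rule out chords between the interior of $R$ and $\{x_1,\ldots,x_{k-1}\}$, and shortest--path minimality of $R$ forbids any chord between $\{x_0,x_k\}$ and an interior vertex of $R$, as such a chord would yield a strictly shorter admissible path. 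Since $V(P)\subseteq V(C)$, we have $N[C]\supseteq N[P]$, so $C$ meets every $B\in\mathcal{B}$ and delivers the second alternative.

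The subtlety I expect to require the most care is the degenerate case $k=1$, in which $P$ is a single edge $x_0x_1$; here the edge itself is already a path from $x_0$ to $x_1$ of length $1$ but does not close into a cycle. Insisting that the interior of $R$ be nonempty (which is automatic for $k\geq 2$ since $x_0x_k\notin E(G)$ by induced-ness of $P$) combined with the connectivity of $B^{L}\cup B^{R}$ and the inequality $w^{L}\neq w^{R}$ guarantees that a genuine detour through the bramble exists, after which the chord analysis proceeds uniformly for all $k\geq 1$.
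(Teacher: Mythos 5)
Your proposal is correct and follows essentially the same route as the paper: take a shortest dominating induced path from \Cref{lemma:dominatingpath}, extract the two witness bramble elements at its endpoints, and close the path through their (connected, since the bramble is strong) union via a shortest admissible detour, with the same care taken for the length-one path case. The only cosmetic difference is that the paper phrases the detour as "length at least $2$" where you require a nonempty interior, and it explicitly notes $B_s\neq B_t$, which your argument gets implicitly.
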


\begin{proof}
    By \Cref{lemma:dominatingpath}, there exists an induced path $P$ such that $N[P]\cap B\neq \emptyset$ for all $B\in \mathcal{B}$. 
    We may assume that $P$ is shortest among such paths, and let $s$ and $t$ be the endpoints of $P$. 
    If $s=t$, the first case holds, thus suppose that $s\neq t$.
    
    By the minimality of $P$, for each endpoint $x$ of $P$ there exists a $B_x\in \mathcal{B}$ such that $N[P-x]\cap B_x=\emptyset$.
    Note that $B_s\neq B_t$, since otherwise $N[P-s]\cap B_s= \emptyset$, implying $N[s]\cap B_s\neq \emptyset$, which is in contradiction with $N[P-t]\cap B_s = N[P-t]\cap B_t= \emptyset$.
    Recall that $B_s\cup B_t$ induces a connected subgraph of $G$ and the only neighbours of $B_s\cup B_t$ in $P$ are $s$ and $t$.
    Therefore, there exists a shortest path $R$ from $s$ to $t$ such that $R$ has length at least $2$ and all the internal vertices of $R$ are in $B_s\cup B_t$.
    Note that $R$ is an induced path, unless $P$ has length~$1$.
    Furthermore, no internal vertex of $R$ has a neighbour in $P$ other than $s$ and $t$ from the choice of $B_s$ and $B_t$.
    Hence, $C\coloneqq P\cup R$ is an induced cycle and $N[C]\cap B \supseteq N[P]\cap B\neq \emptyset$ for all $B\in \mathcal{B}$.
\end{proof}

Now we analyse the length of the cycle under the condition that $G$ is $K_{1,d}$-free and $\mathcal{B}$ is a strong bramble of $\alpha$-order at least $dk$ (as in \Cref{cor:longdominatingholes}), where $d\ge 1$ and $k\geq 2$. 
By \Cref{lemma:dominatingcycle}, there exists an induced cycle $C$ in $G$ such that $N[C]$ is a cover of $\mathcal{B}$. 
Then $dk\leq \alpha(N[C])\leq d\cdot |V(C)|$. 
This proves \Cref{cor:longdominatingholes}.

\section{Excluding an induced wheel minor}\label{sec:excluding}

We now proceed with a sketch for the proof of our main theorem, i.e.\@ \Cref{thm:forbiddingthewheel}.
This proof follows along two steps.
First we prove a technical auxiliary lemma, and then we use the lemma together with \Cref{thm:brambles} to obtain a contradiction to the assumption that there exists a $K_{1,d}$-free graph without an induced $W_{\ell}$-minor and whose tree-independence number exceeds the bound claimed in our main theorem.

Roughly speaking, our auxiliary lemma (\Cref{lemma:vincinityofholes}) says that in a $K_{1,d}$-free graph $G$ without an induced wheel minor, given a long induced cycle $C$, there exists a tree-decomposition $(T,\beta)$ of $G$ whose internal bags all have small independence number and whose leaves correspond to the components of $G-N[C]$.

We first need a small observation on neighbourhoods of large holes in graphs.

\begin{lemma}\label{lemma:wheelsnexttoholes}
Let $\ell\geq 3$ be an integer and $G$ be a graph.
Moreover, let $C$ be an induced cycle of length at least $\ell$ in $G$ and let $K$ be a component of $G-C$.
Then either
\begin{enumerate}
\item $G$ contains $W_{\ell}$ as an induced minor, or
\item $|N(K)|\leq \ell-1$.
\end{enumerate}
\end{lemma}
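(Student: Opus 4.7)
The plan is to prove the contrapositive of \emph{ii)}: if $|N(K)| \geq \ell$, then $G$ contains $W_\ell$ as an induced minor. The idea is to use $K$ itself (contracted to a single vertex) as the hub of the wheel and to carve the cycle $C$ into $\ell$ arcs that will serve as the rim branch sets.

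First I would observe that since $K$ is a component of $G-C$, every vertex in $V(G)\setminus V(K)$ with a neighbour in $K$ must lie on $C$, so $N(K)\subseteq V(C)$. Combined with the assumption $|N(K)|\geq\ell$, this allows me to pick $\ell$ distinct vertices $y_1,\ldots,y_\ell\in N(K)$ that occur in this cyclic order along $C$. Since $C$ has length at least $\ell$, this selection is possible.

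Next I would split $V(C)$ into $\ell$ arcs $A_1,\ldots,A_\ell$ along $C$ by setting $A_i$ to be the subpath of $C$ consisting of $y_i$ together with all vertices of $C$ strictly between $y_i$ and $y_{i+1}$ (indices taken mod $\ell$). I then define the branch sets $V_0 \coloneqq V(K)$ and $V_i \coloneqq V(A_i)$ for $i\in[\ell]$. These are pairwise disjoint connected subsets of $V(G)$: $V_0$ is connected because $K$ is a component, and each $V_i$ is connected because it is the vertex set of a subpath of $C$; disjointness follows from the fact that the $A_i$'s partition $V(C)$ and are disjoint from $V(K)$.

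Finally I would verify that $(V_0,V_1,\ldots,V_\ell)$ is an induced minor model of $W_\ell$. Hub-rim edges: each $y_i\in V_i$ has a neighbour in $K=V_0$, so $V_0$ is adjacent to every $V_i$ with $i\in[\ell]$. Consecutive rim edges: the last vertex of $A_i$ along $C$ is $y_{i+1}$'s predecessor on $C$, which is adjacent to $y_{i+1}\in V_{i+1}$, so $V_i$ and $V_{i+1}$ are adjacent for every $i\in[\ell]$ (mod $\ell$). Non-edges between non-consecutive rim sets: if $V_i$ and $V_j$ are not consecutive in the cyclic order, then $A_i$ and $A_j$ are disjoint arcs of $C$ that are not adjacent along $C$, and since $C$ is an induced cycle there are no chords between them in $G$, hence no edges between $V_i$ and $V_j$. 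This realises $W_\ell$ exactly as an induced minor of $G$. I do not expect any genuine obstacle here: the entire argument is a direct construction, and the only things to check are that the chosen arcs are non-empty, that the required adjacencies hold, and that the induced-cycle property of $C$ rules out unwanted chords — all of which follow immediately from the hypotheses.
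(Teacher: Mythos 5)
Your proposal is correct and follows essentially the same strategy as the paper: use the connected component $K$ (contracted) as the hub and arcs of the induced cycle $C$ as the rim branch sets, with the induced‑cycle property ruling out unwanted chords. The only cosmetic difference is that the paper first contracts $C$ onto all of $N(K)$ to obtain $W_{|N(K)|}$ and then passes to $W_\ell$, whereas you select $\ell$ vertices of $N(K)$ upfront and partition $C$ into $\ell$ arcs directly; both are valid.
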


\begin{proof}
Observe that $N(K)$ is included in $V(C)$. 
We may assume that $|N(K)|\ge \ell$, since otherwise there is nothing to show.
We claim that we can then find $W_{|N(K)|}$ as an induced minor in $G$.
To do this we first delete all vertices of $G$ that do not belong to $C$ or $K$.
Then we iteratively contract edges of $C$ until only the vertices of $N(K)$ remain.
Finally, we contract $K$ into a single vertex.
This last step is possible because $K$ is connected.
The result is $W_{|N(K)|}$, as desired.
Hence, $G$ contains $W_{|N(K)|}$ as an induced minor and therefore, in particular, $W_{\ell}$ as an induced minor.
\end{proof}

\begin{lemma}\label{lemma:vincinityofholes}
There exists a function $f_{\ref{lemma:vincinityofholes}}(d,\ell)\in \mathcal{O}\big(d\ell (\ell^{10}+2^{(\max\{ \ell,d+2\})^5})\big)$ such that for every choice of positive integers $d$ and $\ell\geq 3$ and every $K_{1,d}$-free graph $G$ containing an induced cycle $C$ of length at least $\ell$, one of the following statements holds:
\begin{enumerate}
    \item $G$ contains $W_{\ell}$ as an induced minor, or
    \item $G$ has a tree-decomposition $(T,\beta)$ such that there exists a partition of $V(T)$ into two sets $F_1$ and $F_2$ such that
    \begin{itemize}
        \item $\alpha(\beta(t))\leq f_{\ref{lemma:vincinityofholes}}(d,\ell)$ for all $t\in F_1$,
        \item $\alpha(\beta(t_1)\cap\beta(t_2))\leq f_{\ref{lemma:vincinityofholes}}(d,\ell)$ for all $t_1t_2\in E(T)$,
       \item every vertex in $F_2$ is a leaf of $T$,
         \item there is a bijection $\phi$ between the components of $G-N[C]$ and $F_2$ such that for each component $J$ of $G-N[C]$ it holds that $N[J]=\beta(\phi(J))$, and
        \item for every $t\in F_2$ with neighbour $t'\in F_1$ we have that $\beta(t)\cap\beta(t')=N(\phi^{-1}(t))$.
    \end{itemize}
\end{enumerate}
\end{lemma}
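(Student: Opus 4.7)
The plan is to assume that $G$ does not contain $W_{\ell}$ as an induced minor and to build the required tree-decomposition explicitly. Two structural observations come first. Every $u \in N(C)$ must satisfy $|N(u) \cap V(C)| \le \ell - 1$: otherwise, contracting the arcs of $C$ delimited by consecutive cycle-neighbours of $u$ into single vertices produces $W_k$ as an induced minor, with $k = |N(u) \cap V(C)|$, and further cycle contractions yield $W_{\ell}$. Second, by \Cref{lemma:wheelsnexttoholes}, every component $K$ of $G - V(C)$ satisfies $|Y_K| \le \ell - 1$, where $Y_K := N_G(V(K)) \subseteq V(C)$. Setting $X_K := V(K) \cap N(C)$, a pigeonhole argument (each $x \in X_K$ has some neighbour in $Y_K$, and each $y \in Y_K$ has at most $d - 1$ pairwise non-adjacent neighbours by $K_{1,d}$-freeness) gives $\alpha(X_K) \le (d-1)(\ell - 1)$, whence $\alpha(N(J)) \le (d-1)(\ell - 1)$ for every component $J$ of $G - N[C]$.

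Next, I would form the auxiliary graph $G^*$ obtained from $G$ by contracting each component $K$ of $G - V(C)$ into a single vertex $v_K$. Its vertex set is $V(C) \cup \{v_K : K\}$, the set $V(C)$ still induces a cycle, and each $v_K$ is adjacent precisely to $Y_K$. Each $v_K$ has degree at most $\ell - 1$. Each $c \in V(C)$ has at most $d + 1$ neighbours in $G^*$: its two cycle neighbours, plus at most $d - 1$ contracted vertices $v_K$ with $c \in Y_K$, because choosing one vertex of $N(c) \cap V(K)$ from each such $K$ gives an independent set in $N(c)$ (the representatives lie in distinct components of $G - V(C)$ and are therefore pairwise non-adjacent in $G$). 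Hence $\Delta(G^*) \le \max\{\ell, d + 2\}$. Since $G^*$ is an induced minor of $G$, it inherits the absence of $W_{\ell}$ as an induced minor.

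A direct combinatorial construction then shows that $W_{\ell}$ is an induced minor of the $(\ell' \times \ell')$-grid for some $\ell' = \mathcal{O}(\ell)$. For instance, in a $5$-row strip of width roughly $(\ell + 3)/2$, take the middle row restricted to the internal columns as the central branch set, place $\ell - 2$ singleton branch sets along the top and bottom rows for most cycle vertices, and close the cycle by two three-vertex branch sets in the two extreme columns. Consequently $G^*$ excludes the $(\ell' \times \ell')$-grid as induced minor, and \Cref{prop:inducedgrid} applied to $G^*$ yields $\mathsf{tw}(G^*) \le f_{\ref{prop:inducedgrid}}(\max\{\ell, d+2\}, \ell') = \mathcal{O}\bigl(\ell^{10} + 2^{(\max\{\ell, d+2\})^5}\bigr)$.

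Finally, I would take a tree-decomposition $(T^*, \beta^*)$ of $G^*$ of width $\mathsf{tw}(G^*)$ and lift it to a tree-decomposition of $G[N[C]]$ by replacing each occurrence of $v_K$ in a bag by $X_K$: set $\beta'(t) := (\beta^*(t) \cap V(C)) \cup \bigcup_{K : v_K \in \beta^*(t)} X_K$. The $X_K$'s for distinct $K$ are pairwise disjoint and pairwise non-adjacent in $G$ (they live in different components of $G - V(C)$), so $\alpha(\beta'(t)) \le |\beta^*(t)|\bigl(1 + (d-1)(\ell-1)\bigr)$, which is within $f_{\ref{lemma:vincinityofholes}}(d,\ell)$; the same bound holds for intersections of adjacent bags. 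For each component $J$ of $G - N[C]$, letting $K_J$ be its component of $G - V(C)$, we have $N(J) \subseteq X_{K_J}$, so the lift of any bag of $(T^*, \beta^*)$ containing $v_{K_J}$ contains $N(J)$; attach a fresh leaf $\phi(J)$ with $\beta(\phi(J)) := N[J]$ to such a bag, noting that its intersection with the parent is exactly $N(J)$, of independence number at most $(d-1)(\ell-1)$. The main technical obstacle is the explicit grid-to-wheel induced-minor construction (since $W_{\ell}$ is not a subgraph of any grid for $\ell \ge 5$, the branch sets must be designed carefully so that all non-wheel pairs of branch sets remain pairwise non-adjacent), together with the bookkeeping of verifying that the lifted decomposition and the leaf attachments simultaneously satisfy all the structural conditions of the lemma.
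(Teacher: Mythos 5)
Your proposal is correct and follows essentially the same route as the paper: contract the components of $G-C$ into single vertices to obtain a bounded-degree induced minor of $G$, invoke \Cref{prop:inducedgrid} to bound its treewidth (since a large induced grid minor would yield $W_\ell$), lift the resulting tree-decomposition to $G[N[C]]$ by expanding each contracted vertex into its attachment set in $N(C)$ (whose independence number is at most $(d-1)(\ell-1)$ by \Cref{lemma:wheelsnexttoholes} and $K_{1,d}$-freeness), and attach a leaf bag $N[J]$ for each component $J$ of $G-N[C]$. The one slip is in your illustrative grid-to-wheel model: a hub confined to the middle row of a five-row strip is not adjacent to the outer rows carrying the cycle, so the hub branch set must span the intermediate rows as well (or one uses the standard model inside the $(\ell\times\ell)$-grid that contracts the entire interior into the hub and merges each corner with a boundary neighbour, which is the fact the paper itself uses without proof); this is easily repaired and does not affect the asymptotics of $f_{\ref{lemma:vincinityofholes}}$.
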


\begin{proof}
Suppose that $G$ does not contain $W_\ell$ as an induced minor.
Let $G_1$ be the graph obtained by contracting every component $K$ of $G-C$ into a single vertex $v_K$.
By \cref{lemma:wheelsnexttoholes}, $|N(K)|\leq \ell-1$ for each such component $K$.
So $\mathsf{deg}_{G_1}(v_K)\leq \ell-1$ for all components $K$ of $G-C$.
Moreover, as $G$ is $K_{1,d}$-free, no vertex of $C$ can have neighbours in more than $d$ components of $G-C$.
Hence, $\mathsf{deg}_{G_1}(v)\leq d+2$ for every vertex $v\in V(C)$.
It follows that $\Delta(G_1)\leq \Delta\coloneqq \max\{\ell-1,d+2\}$.
In addition, as $G_1$ was obtained by contracting pairwise vertex disjoint connected subsets of $V(G)$, $G_1$ is an induced minor of $G$.
Hence, any induced minor of $G_1$ is an induced minor of $G$.
If $\mathsf{tw}(G_1)\geq f_{\ref{prop:inducedgrid}}(\Delta,\ell)$, we get that $G_1$, and therefore $G$, contains the $(\ell\times \ell)$-grid as an induced minor by \cref{prop:inducedgrid}.
This implies that $G$ contains $W_{\ell}$ as an induced minor, so we may assume that $\mathsf{tw}(G_1)< f_{\ref{prop:inducedgrid}}(\Delta,\ell)$.
Let $(T',\beta_1)$ be a tree-decomposition of $G_1$ of width at most $f_{\ref{prop:inducedgrid}}(\Delta,\ell)-1$.

\begin{figure}
    \centering
    \begin{tikzpicture}[scale=1.5]
    \pgfdeclarelayer{background}
    \pgfdeclarelayer{foreground}
    \pgfsetlayers{background,main,foreground}
    \begin{pgfonlayer}{background}
        \pgftext{\includegraphics[width=10cm]{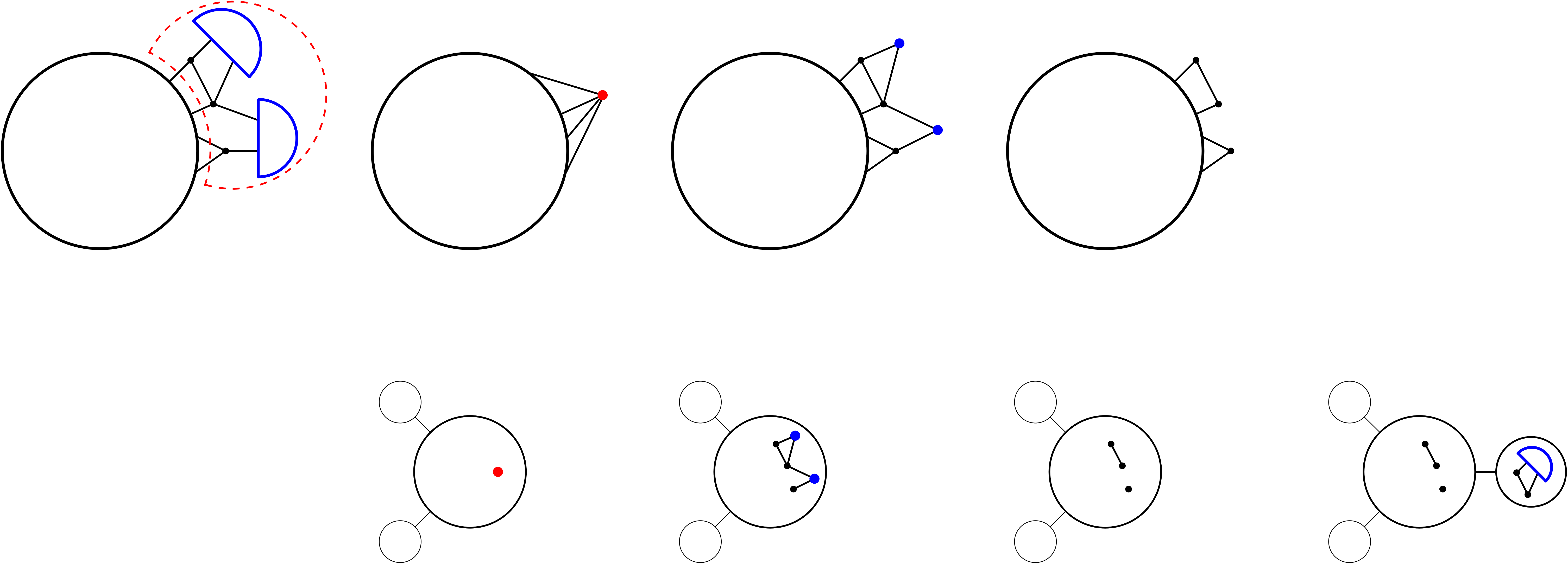}} at (C.center);
    \end{pgfonlayer}{background}
    \begin{pgfonlayer}{main}
        \node[]() at (-4,-0.1) {$G$};
        \node[]() at (-5.15,0.85) {$C$};
        \node[]() at (-3.05,1.8) {$\color{red}{K}$};
        \node[]() at (-3.52,1.58) {$\color{blue}{J}$};
        \node[]() at (-2,-0.1) {$G_1$};
        \node[]() at (-0.96,1.26) {$\color{red}{v_K}$};
        \node[]() at (-0.05,-0.1) {$G_2$};
        \node[]() at (0.93,1.58) {$\color{blue}{u_J}$};
        \node[]() at (2.05,-0.1) {$G_3$};
        \node[]() at (-2,-2.2) {$(T',\beta_1)$};
        \node[]() at (-0.05,-2.2) {$(T',\beta_2)$};
        \node[]() at (2.05,-2.2) {$(T',\beta_3)$};
        \node[]() at (4.05,-2.2) {$(T,\beta)$};
    \end{pgfonlayer}{main}
    \begin{pgfonlayer}{foreground}
    \end{pgfonlayer}{foreground}
\end{tikzpicture}
    \caption{Example for \Cref{lemma:vincinityofholes}}
    \label{fig:manytreedecompositions}
\end{figure}

Now let $G_2$ be obtained from $G$ by contracting every component $J$ of $G-N[C]$ into a vertex $u_J$.
We can now construct a tree-decomposition $(T',\beta_2)$ of $G_2$ from $(T',\beta_1)$ by replacing every $v_K\in V(G_1)$ in each bag $\beta_1(t)$ with the union of all vertices $u_J\in V(G_2)$ such that $J\subseteq K$, together with all vertices in $N(C)\cap V(K)$. 
Notice that $(T',\beta_2)$, while it is indeed a tree-decomposition for $G_2$, is no longer guaranteed to have bounded width.

Next let $G_3$ be obtained by deleting all vertices $u_J$ from $G_2$ where $J$ is a component of $G-N[C]$. 
In particular, $G_3=G[N[C]]$.
Moreover, let $(T',\beta_3)$ be the tree-decomposition of $G[N[C]]$ obtained from $(T',\beta_2)$ by removing all vertices $u_J$ as above from its bags.
(See \cref{fig:manytreedecompositions} for a representation of the constructions of $G_1$, $G_2$, and $G_3$, and their respective tree-decompositions.)
We now make three observations.
\medskip

\textbf{Observation 1:}
For every component $K$ of $G-C$ and every vertex $t\in V(T')$, if $V(K) \cap \beta_3(t)\neq\emptyset$, then $N(C)\cap V(K)\subseteq \beta_3(t)$.
\smallskip

This follows directly from the definition of the tree-decompositions $(T',\beta_2)$ and $(T',\beta_3)$.

\medskip

\textbf{Observation 2:}
For every $t\in V(T')$ the sum of $|\beta_3(t)\cap V(C)|$ and the number of components $K$ of $G-C$ with $\beta_3(t)\cap V(K)\neq\emptyset$ is at most $f_{\ref{prop:inducedgrid}}(\Delta,\ell)$.
\smallskip

This follows from the fact that $\beta_3(t)\cap V(C) = \beta_1(t)\cap V(C)$, the definition of $(T',\beta_1)$ and the fact that $(T',\beta_1)$ is of width at most $f_{\ref{prop:inducedgrid}}(\Delta,\ell)-1$.
\medskip

\textbf{Observation 3:}
For every $t\in V(T')$ it holds that $\alpha(G[\beta_3(t)])\leq d(\ell-1)f_{\ref{prop:inducedgrid}}(\Delta,\ell)$.
\smallskip

To see this recall that, by \cref{lemma:wheelsnexttoholes} we have that $|N(K)|\leq \ell-1$ for all components $K$ of $G-C$.
Note that $N(K) \subseteq C$ and, hence, $N(C)\cap V(K)\subseteq N(N(K))$.
Since $G$ is $K_{1,d}$-free, this implies that $\alpha(N(C)\cap V(K))\leq d(\ell-1)$.
Finally, by \textbf{Observation 2}, every bag of $(T',\beta_3)$ is the union of at most $f_{\ref{prop:inducedgrid}}(\Delta,\ell)$ sets $\{x\}$ for $x\in V(C)$ and sets $N(C)\cap V(K)$ from components $K$ of $G-C$.
\medskip

To construct the tree-decomposition $(T,\beta)$ and complete the proof, we proceed as follows.
Consider a component $J$ of $G-N[C]$.
Then, there exists a component $K$ of $G-C$ such that $J\subseteq K$.
Moreover, by \textbf{Observation 1}, there exists a vertex $t\in V(T')$ such that $N(C)\cap N(K)\subseteq \beta_3(t)$.
This means, in particular, that $N(J)\subseteq \beta_3(t)$.
Hence, by introducing for every component $J$ of $G-N[C]$ a new vertex $t_J$ adjacent to some vertex $t\in V(T')$ such that $N(J)\subseteq\beta_3(t)$ we obtain our tree $T$.
Finally, for every $t\in V(T)\cap V(T')$ we set $\beta(t)\coloneqq \beta_3(t)$ and for every $J$ as above we set $\beta(t_J)\coloneqq N[J]$.
From here, it is straightforward to verify that $(T,\beta)$ is a tree-decomposition of $G$ satisfying the conditions of the second outcome of our assertion where we set $F_1\coloneqq V(T')$, $F_2\coloneqq V(T)\setminus F_1$, and $f_{\ref{lemma:vincinityofholes}}(d,\ell)\coloneqq d(\ell-1)f_{\ref{prop:inducedgrid}}(\Delta,\ell)$.
\end{proof}

From here, towards our main theorem, we assume that $G$ has large tree-independence number but no large induced wheel minor.
Thus, by \cref{thm:brambles}, $G$ has a strong bramble $\mathcal{B}$ of large $\alpha$-order.
This means, according to \cref{cor:longdominatingholes}, that there is a long induced cycle $C$ in $G$ whose closed neighbourhood covers $\mathcal{B}$.

Using \cref{lemma:vincinityofholes}, we construct a tree-decomposition $(T,\beta)$ of $G$ such that the intersection of any two adjacent bags has small independence number; moreover, every bag, except possibly for the leaf bags that contain a component of $G-N[C]$, has small independence number.


\thmforbiddingthewheel*

\begin{proof}
Let $f_{\ref{lemma:vincinityofholes}}$ be the function from \cref{lemma:vincinityofholes} and let $f_{\ref{thm:forbiddingthewheel}}(d,\ell)\coloneqq 4f_{\ref{lemma:vincinityofholes}}(d,\ell)+1$.

Let $G$ be a $K_{1,d}$-free graph without $W_{\ell}$ as an induced minor and suppose towards a contradiction that $\alpha\text{-}\mathsf{tw}(G)\geq 4f_{\ref{lemma:vincinityofholes}}(d,\ell)+2$.
By \cref{thm:brambles} $G$ contains a strong bramble $\mathcal{B}$ of $\alpha$-order at least $f_{\ref{lemma:vincinityofholes}}(d,\ell)+1$.
Moreover, since $f_{\ref{lemma:vincinityofholes}}(d,\ell)\geq d\ell$, it follows from \cref{cor:longdominatingholes} that $G$ contains an induced cycle $C$ of length at least $\ell$ such that $N[C]$ is a cover for $\mathcal{B}$.

Since $G$ does not contain $W_{\ell}$ as an induced minor, \cref{lemma:vincinityofholes} implies that $G$ has a tree-decomposition $(T,\beta)$ such that $V(T)$ can be partitioned into two sets $F_1$ and $F_2$ such that the following properties are satisfied.
    \begin{itemize}
        \item $\alpha(\beta(t))\leq f_{\ref{lemma:vincinityofholes}}(d,\ell)$ for all $t\in F_1$.
        \item $\alpha(\beta(t_1)\cap\beta(t_2))\leq f_{\ref{lemma:vincinityofholes}}(d,\ell)$ for all $t_1t_2\in E(T)$.
        \item Every vertex in $F_2$ is a leaf of $T$.
        \item There is a bijection $\phi$ between the components of $G-N[C]$ and $F_2$ such that for each component $J$ of $G-N[C]$ it holds that $N[J]=\beta(\phi(J))$.
        \item For every $t\in F_2$ with neighbour $t'\in F_1$ we have that $\beta(t)\cap\beta(t')=N(\phi^{-1}(t))$.
    \end{itemize}

\textbf{Claim 1:} For every component $J$ of $G-N[C]$, every $B\in\mathcal{B}$ such that $B\cap V(J)\neq\emptyset$ contains a vertex of $N(J)$.
\smallskip

To see that \textbf{Claim 1} holds, recall that first, every element $B\in\mathcal{B}$ is connected, and second, $N[C]$ is a cover for $\mathcal{B}$.
Thus, if there is a vertex $b\in B\cap V(J)$, there is also $c\in N[C]$ in $B$ and a path from $b$ to $c$ in $B$. 
Since every path from $V(J)$ to $N[C]$ has to go through $N(J)$, we get the wanted result.

\bigskip

\begin{figure}
    \centering
    \begin{tikzpicture}[scale=1.5]
    \pgfdeclarelayer{background}
    \pgfdeclarelayer{foreground}
    \pgfsetlayers{background,main,foreground}
    \begin{pgfonlayer}{background}
        \pgftext{\includegraphics[width=9cm]{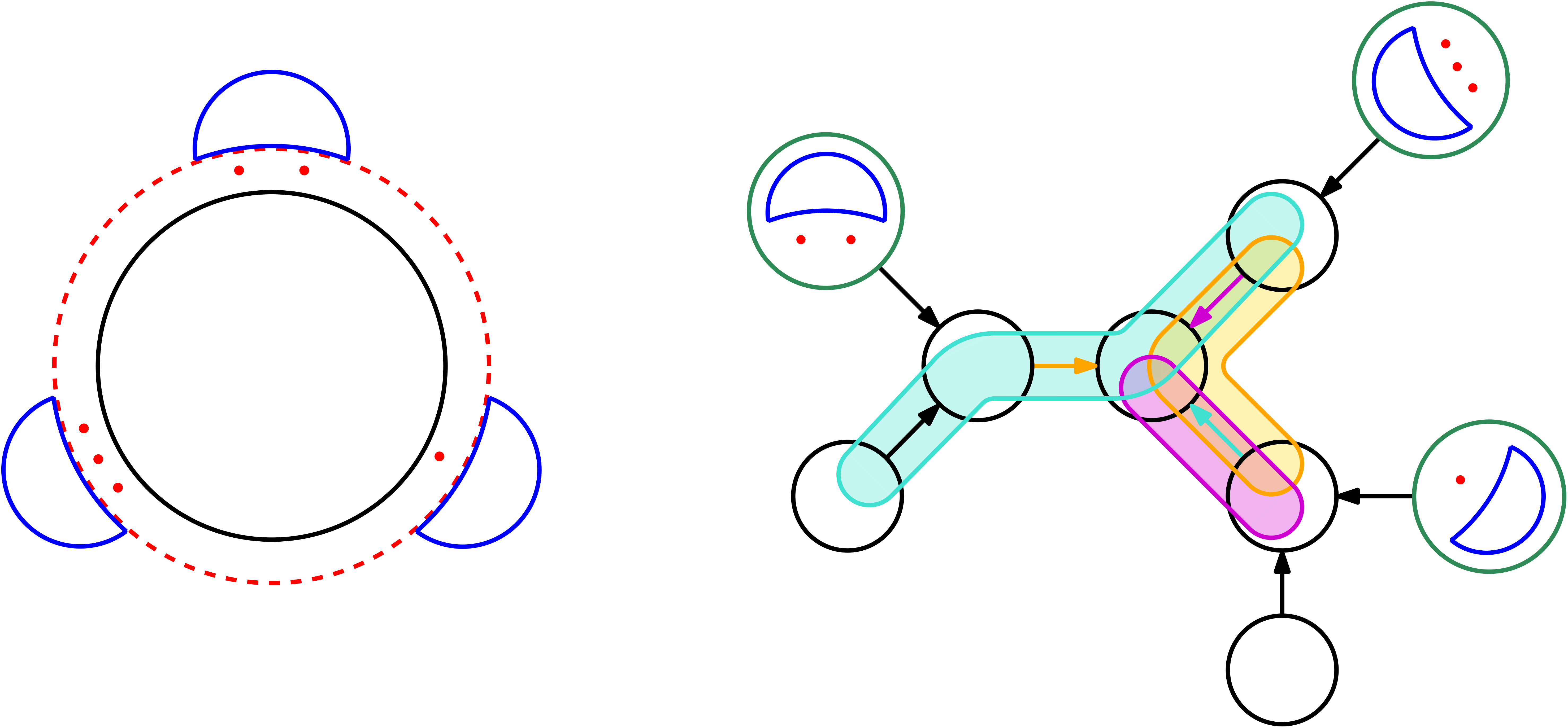}} at (C.center);
    \end{pgfonlayer}{background}
    \begin{pgfonlayer}{main}
        \node[]() at (-2.92,1.9) {$\color{blue}{J}$};
        \node[]() at (-2.92,-0.8) {$C$};
        \node[]() at (-2.92,-1.47) {$\color{red}N[C]$};
        \node[]() at (0.25,1.53) {$\color{myGreen}{\beta(\phi(J))}$};
    \end{pgfonlayer}{main}
    \begin{pgfonlayer}{foreground}
    \end{pgfonlayer}{foreground}
\end{tikzpicture}
    \caption{A tree-decomposition $(T,\beta)$ as provided by \cref{lemma:vincinityofholes} centered at a long induced cycle $C$.
    The arrows indicate the orientation of the edges of $T$ as induced by the bramble $\mathcal{B}$ in the proof of \cref{thm:forbiddingthewheel}.
    The three colored shapes along the tree-decomposition indicate three members of $\mathcal{B}$ and how they are distributed across the bags of $(T,\beta)$.}
    \label{fig:Puttingthingstogether}
\end{figure}

We have that $\alpha(\beta(t_1)\cap\beta(t_2))\leq f_{\ref{lemma:vincinityofholes}}(d,\ell)$ for all edges $t_1t_2\in E(T)$.
This allows us to define an orientation of $T$ as follows.

For every edge $t_1t_2\in E(T)$ let $T_{t_i}$ be the subtree of $T-t_1t_2$ containing $t_i$.
For every subtree $T'$ of $T$, we set $\beta(T')\coloneqq \bigcup_{t\in V(T')}\beta(t)$. 
For every $t_1t_2\in E(T)$, since the $\alpha$-order of $\mathcal{B}$ is at least $f_{\ref{lemma:vincinityofholes}}(d,\ell)+1$ and $\alpha(\beta(t_1)\cap\beta(t_2))\leq f_{\ref{lemma:vincinityofholes}}(d,\ell)$, there exists some $B\in\mathcal{B}$ with $B\cap\beta(t_1)\cap\beta(t_2)=\emptyset$.
Since $B$ is connected, there exists a unique $i\in[2]$ such that $B\subseteq \beta(T_{t_i})\setminus\beta(t_{3-i})$.
Moreover, since the intersection of any two elements of $\mathcal{B}$ is non-empty, all the elements of $\mathcal{B}$ not intersecting $\beta(t_1)\cap\beta(t_2)$ are subsets of $\beta(T_{t_i})$.
Let $\vec{T}$ be the orientation of $T$ such that for every $(t_1,t_2)\in E(\vec{T})$ there exists some $B\in\mathcal{B}$ such that $B\subseteq \beta(T_{t_2})\setminus\beta(t_1)$.
See \cref{fig:Puttingthingstogether} for an illustration.

Let $t\in F_2$ such that $tt'\in E(T)$ is the unique edge incident with $t$.
If a bramble element $B$ intersects $\beta(t)$, by \textbf{Claim 1}, it intersects $N(J)$ with $J=\phi^{-1}(t)$.
Then, since $t\in F_2$, we infer that $ N(J)= N(\phi^{-1}(t)) = \beta(t)\cap \beta(t')$, thus $B$ intersects $\beta(t)\cap\beta(t')$. 
Thus no element of $\mathcal{B}$ can be subset of $\beta(t)\setminus\beta(t')$ and the edge has to be oriented from $t$ to $t'$ in $\vec{T}$.

Moreover, for every $t\in F_1$ it holds that $\alpha(\beta(t))\leq f_{\ref{lemma:vincinityofholes}}(d,\ell)$ and thus, for every leaf $t\in V(T)$ with unique neighbour $t'\in V(T)$ it must hold $(t,t')\in E(\vec{T})$.
Notice also that, since $B_1\cap B_2\neq\emptyset$ for all $B_1,B_2\in\mathcal{B}$, there cannot be a vertex $t\in V(T)$ which has two outgoing edges in $\vec{T}$.
Therefore, there must exist a vertex $t\in V(T)\cap F_1$ such that all of its incident edges in ${T}$ are oriented towards $t$ in $\vec{T}$.
This, however, means that $\beta(t)$ is a cover for $\mathcal{B}$ with independence number at most $f_{\ref{lemma:vincinityofholes}}(d,\ell)$, contradicting our initial assumption that $\mathcal{B}$ has $\alpha$-order at least $f_{\ref{lemma:vincinityofholes}}(d,\ell)+1$.
\end{proof}

\section{Detecting an induced wheel minor in \texorpdfstring{$K_{1,d}$}{K\_\{1,d\}}-free graphs}\label{sec:WheelDetection}

Let $H$ be an arbitrary graph.
We are interested in the following problem:
\smallskip

\noindent \textsc{$H$-Induced Minor Containment}\\
\textbf{Input:} A graph $G$.\\
\textbf{Question:} Does $G$ contain $H$ as an induced minor?
\smallskip

Indeed, we are interested in the search version, that is, if the answer to \textsc{$H$-Induced Minor Containment} is \textsc{Yes}, then we want to find an \textsl{induced minor model} of $H$ in $G$.

Let $H$ and $G$ be graphs.
An \emph{induced minor model} of $H$ in $G$ is a collection $\{ X_v\}_{v\in V(H)}$ of pairwise vertex disjoint sets of vertices of $G$ such that $X_v$ is connected for all $v\in V(H)$, and for all distinct $u,v\in V(H)$ there exists an edge $xy\in E(G)$ with $x\in X_u$ and $y\in X_v$ if and only if $uv\in E(H)$.

In this section we discuss the algorithmic implication of our main theorem, i.e.~\Cref{thm:forbiddingthewheel}, for \textsc{$W_{\ell}$-Induced Minor Containment} in $K_{1,d}$-free graphs.

\subsection{Induced Minor Containment in graphs of small tree-independence number}\label{sec:inducedMinorChecking}

In this subsection we provide a short argument for the following theorem.
A version of this theorem was recently obtained by Bousquet et al.\@ \cite{Bousquet2025Induced}.

\begin{theorem}\label{thm:inducedminorchecking}
There exists a function $g_{\ref{thm:inducedminorchecking}}\colon\mathbb{N}^2\to\mathbb{N}$ such that for every positive integer $k$ and every graph $H$ there is an algorithm that solves the problem $\textsc{$H$-Induced Minor Containment}$ in time $|V(G)|^{g_{\ref{thm:inducedminorchecking}}(|V(H)|,k)}$ on every graph $G$ with $\alpha\text{-}\mathsf{tw}(G)\leq k$.
Moreover, the algorithm returns an induced minor model of $H$ if one exists.
\end{theorem}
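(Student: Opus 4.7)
The plan is to encode $H$-induced minor containment as a counting monadic second-order logic ($\mathsf{CMSO}_2$) property over $G$ and then invoke the algorithmic metatheorem of Lima, Milani\v{c}, Mur\v{s}i\v{c}, Okrasa, Rz\k{a}\.zewski, and \v{S}torgel~\cite{Lima2024Tree}, which finds a maximum-weight induced subgraph of bounded clique number satisfying a fixed $\mathsf{CMSO}_2$ formula in polynomial time on graphs of bounded tree-independence number.

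First I would write down a $\mathsf{CMSO}_2$ formula $\varphi_H(X)$ stating that the induced subgraph $G[X]$ admits a partition $X=\bigcup_{v\in V(H)}X_v$ into pairwise disjoint sets with each $G[X_v]$ connected and, for every pair $v_i,v_j\in V(H)$, an edge of $G$ joining $X_{v_i}$ and $X_{v_j}$ if and only if $v_iv_j\in E(H)$. Since $H$ is fixed, the partition is encoded using $h=|V(H)|$ additional existentially quantified monadic variables; connectedness of an induced subgraph is a standard $\mathsf{MSO}_2$ property, and the adjacency conditions are first-order.

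Next I would establish a clique-number bound on a suitable witness: if $G$ contains $H$ as an induced minor, then there is an induced minor model $(X_v)_{v\in V(H)}$ of $H$ in $G$ such that the clique number of $G\big[\bigcup_v X_v\big]$ is at most $2|V(H)|(|V(H)|-1)$. Starting from any induced minor model and removing vertices as long as the model property is preserved, I obtain an inclusion-minimal model in which every vertex of each branch set $X_v$ is either a \emph{contact vertex} (an endpoint of an edge realising an adjacency of $v$ in $H$, so at most $|V(H)|-1$ of them per branch set) or a cut vertex of $G[X_v]$. Since each leaf of the block-cut tree of $G[X_v]$ is a block whose interior vertices must be contacts, the block-cut tree has at most $|V(H)|-1$ leaves, and therefore every block contains at most $|V(H)|-1$ cut vertices. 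Combined with the bound of $|V(H)|-1$ on interior vertices summed over all blocks, this yields $|B|\le 2(|V(H)|-1)$ for every block $B$, and hence $\omega(G[X_v])\le 2(|V(H)|-1)$. A clique $Q$ in the whole model projects under the branch-set assignment to a clique in $H$ of size at most $|V(H)|$ and intersects each $X_v$ in a clique of size at most $2(|V(H)|-1)$, yielding the stated bound.

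Finally, I would invoke the metatheorem with the formula $\varphi_H$ and the clique-number bound from the previous step to decide, in time $|V(G)|^{g(|V(H)|,k)}$ for some computable function $g$, whether such a set $X$ exists. To recover an explicit partition into branch sets, I would apply standard self-reduction: iterate over vertices of $V(G)$ and labels in $V(H)$, extending $\varphi_H$ with constants that fix the membership of already-processed vertices, and re-invoke the decision algorithm. The main obstacle of the proof is the clique-number bound on minimal witnesses established through the block-cut tree analysis; once this is in place, the remainder of the argument reduces to a routine $\mathsf{CMSO}_2$ formalisation and a black-box application of the metatheorem.
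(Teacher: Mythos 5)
Your proposal is correct and has the same overall architecture as the paper's proof: express the existence of an induced minor model with vertex set $Y$ as a $\mathsf{CMSO}_2$ property of $G[Y]$, bound the clique number of a suitably minimal model by a polynomial in $|V(H)|$, and then apply the metatheorem of Lima et al.\ (\Cref{thm:meta}). The one genuinely different ingredient is the proof of the clique bound (\Cref{lemma:smallmodels} in the paper). The paper takes a model of minimum \emph{cardinality}, observes that a branch set $X_v$ with $\omega(G[X_v])>|V(H)|$ yields a BFS-tree with at least $|V(H)|$ leaves, and deletes a leaf not needed to realise any adjacency of $v$; you instead take an \emph{inclusion-minimal} model and run a block--cut-tree analysis of each $G[X_v]$. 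Your route gives the slightly different (but equally serviceable) bound $2|V(H)|(|V(H)|-1)$ in place of $|V(H)|^2$, and arguably isolates more structural information about minimal models. One step of your argument needs to be stated more carefully: as defined, a ``contact vertex'' (an endpoint of an edge realising an adjacency) can occur $|X_v|$ many times in a single branch set, so the parenthetical ``at most $|V(H)|-1$ of them'' is false as written. What minimality actually gives is that every \emph{non-cut} vertex of $G[X_v]$ is the \emph{unique} vertex of $X_v$ witnessing some adjacency $uv\in E(H)$ (removing it would destroy that adjacency), so there are at most $\deg_H(v)\le |V(H)|-1$ non-cut vertices per branch set; this is exactly the count your leaf-block and degree arguments use, so the proof goes through once rephrased. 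Two further small points: the metatheorem needs a tree-decomposition of bounded independence number as input, so you must first run the approximation algorithm of Dallard et al.\ \cite{dallard2024computingtreedecompositionssmall}; and your self-reduction step for extracting the branch sets requires a version of the metatheorem that tolerates fixing individual vertices into prescribed sets, which \Cref{thm:meta} as stated does not directly provide --- the paper avoids this by noting that the witness $G[Y]$ has bounded clique number and bounded tree-independence number, hence bounded treewidth by Ramsey's theorem, and then recovering the model via Courcelle's theorem on $G[Y]$.
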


Given two positive integers $a$ and $b$, we denote by $\mathsf{Ramsey}(a,b)$ the smallest positive integer such that every graph on at least $\mathsf{Ramsey}(a,b)$ vertices contains either a clique of size $a$ or an independent set of size $b$.
A classic theorem of Ramsey \cite{Ramsey1929Problem} says that $\mathsf{Ramsey}(a,b)$ always exists.

\begin{theorem}[Lima et al.~\cite{Lima2024Tree}]\label{thm:meta}
For every $k,r$ and every $\mathsf{CMSO}_2$ formula $\psi$ there exists a positive integer $f_{\ref{thm:meta}}(k,r,\psi)$ such that the following holds:
Let $G$ be a graph given together with a tree-decomposition $(T,\beta)$ of independence number at most $k$, and let $w\colon V(G)\to\mathbb{Q}^+$ be a weight function.
Then, there exists an algorithm that, given $G$ and $(T,\beta)$ as input, finds a maximum-weight set $Y\subseteq V(G)$ such that $G[Y] \models \psi$ and $\omega(G[Y])\leq r$, or concludes correctly that no such set $Y$ exists.
Moreover, this algorithm runs in time $f_{\ref{thm:meta}}(k,r,\psi)\cdot |V(G)|^{\mathcal{O}(\mathsf{Ramsey}(k+1,r+1))}\cdot |V(T)|$.
\end{theorem}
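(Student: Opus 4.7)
My plan is to prove Theorem~\ref{thm:meta} by a Courcelle-style dynamic programming scheme on the tree-decomposition $(T,\beta)$, with Ramsey's theorem replacing the bag-size bound on which the classical bounded-treewidth version relies. I would root $T$ arbitrarily and, for each $t\in V(T)$, let $V_t$ denote the union of the bags in the subtree of $T$ rooted at $t$. At each node $t$, I would maintain a DP table indexed by pairs $(S,\tau)$, where $S\subseteq\beta(t)$ encodes the intended intersection $Y\cap\beta(t)$ of a candidate solution $Y$ with the current bag, and $\tau$ is the $\mathsf{CMSO}_2$-type of quantifier depth $q=q(\psi)$ of the coloured structure $G[Y\cap V_t]$ together with the distinguished subset $Y\cap\beta(t)$. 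Each entry stores the maximum weight of a set $Y\cap V_t$ realising the state $(S,\tau)$, or $-\infty$ if none exists. Tables are then built bottom-up using the Feferman--Vaught/Courcelle composition lemmata for $\mathsf{CMSO}_2$: when merging the children of $t$, one combines child states whose restrictions to the adhesion agree and composes their types in a single step of cost bounded by some $f_{\ref{thm:meta}}(k,r,\psi)$. At the root, the answer is the best weight attained by a state whose type satisfies $\psi$; infeasibility is reported when no such state exists.

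The reason this dynamic programming becomes efficient is a Ramsey-type observation on the allowed bag traces. Any feasible $Y$ satisfies $\omega(G[Y])\le r$, and therefore $\omega(G[S])\le r$ for $S=Y\cap\beta(t)$; on the other hand, $\alpha(G[S])\le\alpha(\beta(t))\le k$. Ramsey's theorem then forces $|S|<\mathsf{Ramsey}(k+1,r+1)$, so the number of candidate subsets $S$ per bag is at most $|V(G)|^{\mathcal{O}(\mathsf{Ramsey}(k+1,r+1))}$. Combined with the finite number of $\mathsf{CMSO}_2$-types of quantifier depth $q$, this gives $f_{\ref{thm:meta}}(k,r,\psi)\cdot|V(G)|^{\mathcal{O}(\mathsf{Ramsey}(k+1,r+1))}$ states per node, and summing over $|V(T)|$ nodes yields the claimed running time.

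The main obstacle is the composition step itself: the bags and the adhesions of $(T,\beta)$ can be arbitrarily large and may contain huge cliques, so one cannot simply apply the classical Courcelle composition lemma, which enumerates the bag in full. Instead, the composition has to be carried out \emph{relative to the trace on $Y$}, showing that two child configurations are compatible if and only if their traces on the adhesion agree as labelled sets, and that the resulting parent $\mathsf{CMSO}_2$-type depends only on this bounded trace and the child types. Once this refined composition is justified, correctness and the running time follow exactly as in the standard proof of Courcelle's theorem, with the polynomial factor in $|V(G)|$ coming entirely from the Ramsey bound on $|S|$.
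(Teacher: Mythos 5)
This theorem is imported from Lima et al.~\cite{Lima2024Tree} and the paper gives no proof of its own, so the only meaningful comparison is with the cited source: your sketch reproduces exactly its strategy, namely the Ramsey-theoretic observation that any bag trace $Y\cap\beta(t)$ of a feasible solution has clique number at most $r$ and independence number at most $k$ and hence size below $\mathsf{Ramsey}(k+1,r+1)$, followed by a standard type-based Courcelle/Feferman--Vaught dynamic program over these polynomially many traces. The sketch is sound; the only point worth making explicit is the converse direction of your clique bound --- that enforcing $\omega(G[Y\cap\beta(t)])\le r$ in every bag suffices to guarantee $\omega(G[Y])\le r$ globally, which holds because every clique of $G$ lies in some bag of the tree-decomposition.
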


We say that an induced minor model $\{ X_v\}_{v\in V(H)}$ in a graph $G$ is \emph{small} if $\omega(G[\bigcup_{v\in V(H)}X_v])\leq |V(H)|^2$.

It is relatively easy to see that one can write a formula $\psi_H$ in $\mathsf{CMSO}_2$ that expresses, for a given graph $G'$ with vertex set $Y$ if there exists an induced minor model $\{ X_v\}_{v\in V(H)}$ of  $H$ in $G'$ with $Y=\bigcup_{v\in V(H)}X_v$.
Moreover, the size of this formula only depends on $H$.
Combining this fact with \Cref{thm:meta} and the algorithm of Dallard et al.\@ for approximating the tree-independence number \cite{dallard2024computingtreedecompositionssmall} implies, that all we have to do in order to prove \Cref{thm:inducedminorchecking} is to show that for all graphs $H$ and $G$, if $G$ contains $H$ as an induced minor, then $G$ has a small induced minor model of $H$.

\begin{lemma}\label{lemma:smallmodels}
Let $G$ and $H$ be graphs.
Then $G$ contains $H$ as an induced minor if and only if $G$ contains a small induced minor model of $H$.
\end{lemma}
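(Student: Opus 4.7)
The backward implication is immediate: any small induced minor model of $H$ in $G$ is in particular an induced minor model of $H$ in $G$. For the forward direction, my plan is to start from an arbitrary induced minor model $\{X_v\}_{v\in V(H)}$ of $H$ in $G$ and shrink each branch set into a controlled union of shortest paths. Concretely, I would fix, for every edge $uv\in E(H)$, a single witnessing edge of $G$ between $X_u$ and $X_v$, and denote its endpoint in $X_v$ by $a_{uv}$; this yields a set of ``anchors'' $A_v=\{a_{uv}:u\in N_H(v)\}$ in each branch set with $|A_v|\leq |V(H)|-1$. I would then pick a root $c_v\in X_v$ (an arbitrary anchor if $A_v\neq\emptyset$), compute a BFS tree of the connected graph $G[X_v]$ rooted at $c_v$, and let $X_v'$ be the vertex set of the minimal subtree of that BFS tree containing $A_v\cup\{c_v\}$. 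Equivalently, $X_v'$ is the union of at most $|V(H)|-1$ shortest paths of $G[X_v]$ from $c_v$ to the anchors.

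Verifying that $\{X_v'\}_{v\in V(H)}$ is still an induced minor model of $H$ is routine: each $X_v'$ is connected because the shortest paths composing it share the endpoint $c_v$; every witnessing edge $a_{uv}a_{vu}$ is preserved since both endpoints were retained as anchors; and no new edges between distinct branch sets can appear since $X_v'\subseteq X_v$.

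The heart of the proof, and what I expect to be the main obstacle, is bounding $\omega(G[\bigcup_v X_v'])$ by a quadratic function of $|V(H)|$. Given a clique $K$ of this graph, let $S=\{v\in V(H):K\cap X_v'\neq\emptyset\}$. The edges of $K$ joining vertices of distinct $X_v'$ and $X_u'$ force $uv\in E(H)$, so $S$ is a clique of $H$ and $|S|\leq|V(H)|$. For each fixed $v\in S$, the key observation is that two vertices of $G[X_v]$ can be adjacent only if their BFS-distances from $c_v$ differ by at most one, so $K\cap X_v'$ is entirely contained in at most two consecutive BFS layers. Since each of the at most $|V(H)|-1$ shortest paths used in the construction meets each layer in at most one vertex, this will yield $|K\cap X_v'|\leq 2(|V(H)|-1)$. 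Summing over $v\in S$ then gives $|K|\leq 2|V(H)|(|V(H)|-1)\in\mathcal{O}(|V(H)|^2)$, which is the desired bound (the constant factor can be tightened by placing $c_v$ among the anchors). The delicate point is exactly this layered shortest-path argument, because the shrunken set $X_v'$ itself may have unbounded cardinality and $G[X_v']$ admits no a priori control on its clique number without invoking the BFS-layer structure.
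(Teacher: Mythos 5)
Your argument is sound in all of its essential steps, but it takes a genuinely different route from the paper. The paper's proof is a minimal-counterexample argument: it takes an induced minor model minimizing the total number of vertices, observes that if the model is not small then some branch set $X_v$ satisfies $\omega(G[X_v])>|V(H)|$, roots a BFS tree of $G[X_v]$ at a vertex of a maximum clique (so the tree has at least $|V(H)|$ leaves), and then finds a leaf whose deletion preserves the model, contradicting minimality. Your proof is instead a direct construction: shrink each $X_v$ to the union of the BFS-tree paths from a root to at most $|V(H)|-1$ anchors, and bound the clique number via the observation that a clique of $G[X_v]$ lives in two consecutive BFS layers, each of which meets each path at most once. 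Both proofs hinge on a BFS tree of $G[X_v]$, but yours is constructive and yields an explicit small model in one pass, whereas the paper's is an existence argument via local improvement; your layering lemma is the only nontrivial ingredient and it is correct.

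There is, however, a quantitative shortfall against the paper's definition of \emph{small}, which requires $\omega(G[\bigcup_v X_v])\leq |V(H)|^2$. Writing $h=|V(H)|$, your count gives $|K|\leq |S|\cdot 2(h-1)\leq 2h(h-1)$, and even with the root $c_v$ placed among the anchors one only gets $|K\cap X_v'|\leq 2(h-2)$ and hence $|K|\leq 2h^2-4h$, which still exceeds $h^2$ for all $h\geq 5$. So the claim that choosing $c_v$ to be an anchor ``tightens the constant'' to the desired bound is not correct as stated; your argument proves the lemma with \emph{small} redefined as $\omega\leq 2|V(H)|^2$. Since the paper explicitly chooses $|V(H)|^2$ only for simplicity and every downstream application tolerates any fixed function of $|V(H)|$, this is a cosmetic mismatch rather than a flaw in the method, but to prove the lemma literally as stated you would need either to sharpen the counting or to adjust the threshold in the definition.
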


\begin{proof}
Clearly, if $G$ has a small induced minor model of $H$, then $G$ must contain $H$ as an induced minor.

Now for the converse let us assume that $G$ contains $H$ as an induced minor and, hence, $G$ has an induced minor model $\{ X_v\}_{v\in V(H)}$ of $H$.
We select $\{ X_v\}_{v\in V(H)}$ that minimizes the number of vertices in $\bigcup_{v\in V(H)}X_v$.

If $\{ X_v\}_{v\in V(H)}$ is small, we are done, so we may assume that it is not small.
Let $G'\coloneqq G[\bigcup_{v\in V(H)}X_v]$, by assumption we have that $\omega(G')>|V(H)|^2$.
Then there exists $v\in V(H)$ such that $\omega(G[X_v])>|V(H)|$ and we may select $Y\subseteq X_v$ to be a maximum clique in $G[X_v]$.
Now let $(T,r)$ be a BFS-tree\footnote{We denote by $(T,r)$ a \emph{rooted tree} where $T$ is a tree and $r\in V(T)$ is its \emph{root}.} in $G[X_v]$ rooted at an arbitrary vertex of $Y$.
It follows from $\omega(G[X_v])>|V(H)|$ that $(T,r)$ has at least $|V(H)|$ many leaves.
We may select for each $u\in N(v)$ some leaf $t_u$ of $(T,r)$ such that $X_u$ has a neighbour in the unique $r$-$t_u$-path in $T$.
Then $Z\coloneqq \{ t_u \colon u\in N(H) \}$ contains at most $|V(H)|-1$ vertices.
Let $t\in V(T)$ be an arbitrary leaf of $(T,r)$ that does not belong to $Z$.
It follows that $G'-t$ still contains $H$ as an induced minor and thus $G$ has an induced minor model of $H$ on $|V(G')|-1$ vertices, contradicting our choice of $\{ X_v\}_{v\in V(H)}$.
\end{proof}

Note that the bound $|V(H)|^2$ is chosen to optimise the simplicity of the argument; in \cite{Bousquet2025Induced} Bousquet et al.\@ achieve stronger bounds with a more in-depth analysis.
\medskip

\noindent\emph{Proof of \Cref{thm:inducedminorchecking}.}
From the discussion above, \Cref{lemma:smallmodels}, and \Cref{thm:meta} we obtain an algorithm that either finds an induced subgraph $G'$ of $G$ that has a small induced minor model of $H$ spanning its vertex set, or correctly determines that $G$ does not contain $H$ as an induced minor.
Notice that $G'$ itself has bounded tree-independence number since it is an induced subgraph of $G$.
Since its clique number is also bounded this implies that $G'$ has bounded treewidth.
Therefore, we may now apply Courcelle's Theorem \cite{COURCELLE199012} for graphs of bounded treewidth to find an induced minor model of $H$ in $G'$.
\hfill$\square$

\subsection{Efficiently finding an induced wheel minor}

While \hbox{\textsc{$W_{\ell}$-Induced Minor Containment}} in general graphs is polynomial-time solvable for $\ell\in \{3,4\}$ (see~\cite{Bousquet2025Induced}), the complexity of the problem is open for $\ell\geq 5$. 
We show that the problem can be solved in polynomial time in any class of graphs excluding some fixed induced star.

\begin{theorem}\label{thm:wheelChecking}
Let $d,\ell$ be positive integers with $\ell\geq 3$.
Then, there exists a polynomial-time algorithm that, given a $K_{1,d}$-free graph $G$ as input either decides that $G$ does not contain $W_{\ell}$ as an induced minor or finds an induced minor model of $W_{\ell}$ in $G$.
\end{theorem}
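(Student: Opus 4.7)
The plan is to chain the constructive version of \Cref{thm:forbiddingthewheel} with \Cref{thm:inducedminorchecking}. The remark immediately following the statement of \Cref{thm:forbiddingthewheel} asserts that its proof is effective: there is an algorithm that, given a $K_{1,d}$-free graph $G$, outputs in time $|V(G)|^{g(d,\ell)}$ either an induced minor model of $W_\ell$ in $G$, or a tree-decomposition of $G$ of $\alpha$-width at most $f_{\ref{thm:forbiddingthewheel}}(d,\ell)$. Taking this for granted, the algorithm for \Cref{thm:wheelChecking} is a two-line composition.

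First I would run the constructive version of \Cref{thm:forbiddingthewheel} on the input $G$. If its output is an induced minor model of $W_\ell$, return that model. Otherwise, it returns a tree-decomposition $(T,\beta)$ of $\alpha$-width at most $k\coloneqq f_{\ref{thm:forbiddingthewheel}}(d,\ell)$, and I would feed $G$, the decomposition $(T,\beta)$, $H\coloneqq W_\ell$ into \Cref{thm:inducedminorchecking} (whose engine is the metatheorem \Cref{thm:meta} applied to the $\mathsf{CMSO}_2$ formula $\psi_H$ and the small-model reduction \Cref{lemma:smallmodels}). Its output is either an induced minor model of $W_\ell$, which we return, or a certificate that none exists, which we report as a \textsf{no}-answer. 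Since $d$ and $\ell$ are fixed, so are $k$ and $|V(W_\ell)|=\ell+1$, and both stages run in time polynomial in $|V(G)|$. Correctness is immediate: if $G$ contains $W_\ell$ as an induced minor and the first stage did not find a model, then $G$ has a tree-decomposition of $\alpha$-width $\leq k$, so the second stage is guaranteed to find one.

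The only real work is therefore to justify the constructive version of \Cref{thm:forbiddingthewheel}, which amounts to checking that every step of its proof is polynomial-time implementable. Concretely, I would verify the following: the proof of \Cref{thm:welllinkedduality} is algorithmic and yields (in combination with Dallard, Fomin, Golovach, Korhonen, and Milanič's algorithm for approximating tree-independence number) either a decomposition of small $\alpha$-width or an $\alpha$-linked set; \Cref{lemma:bramble_from_linked_set} then explicitly constructs a strong bramble from such a set; the Gyárfás-style arguments of \Cref{lemma:dominatingpath} and \Cref{lemma:dominatingcycle} are greedy and polynomial, so \Cref{cor:longdominatingholes} yields a long induced cycle $C$ in polynomial time; and finally \Cref{lemma:vincinityofholes} invokes Korhonen's polynomial-time algorithm behind \Cref{prop:inducedgrid} on the contracted graph $G_1$, which either returns an induced $(\ell\times\ell)$-grid (from which a $W_\ell$-model in $G$ is recovered by the contraction argument of \Cref{lemma:wheelsnexttoholes}) or the desired bounded-width decomposition of $G_1$ to be lifted to $G$.

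I expect the main obstacle to be the very last part of the proof of \Cref{thm:forbiddingthewheel}, which is phrased as a contradiction via the orientation of the tree-decomposition. To turn that argument into an algorithm, one has to observe that if the orientation argument reaches a sink bag $\beta(t)$ of small independence number that nonetheless covers the bramble $\mathcal{B}$, this directly contradicts the $\alpha$-order of $\mathcal{B}$ we already witnessed, and so the branch of \Cref{lemma:vincinityofholes} that produced the tree-decomposition could not have occurred in the first place. Hence, running \Cref{lemma:vincinityofholes} on the long induced cycle $C$ must have returned a $W_\ell$-model, which we output. Putting everything together yields the polynomial-time algorithm claimed in \Cref{thm:wheelChecking}.
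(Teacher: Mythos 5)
Your algorithm takes a genuinely different route from the paper's, and the difference matters. The paper's proof of \Cref{thm:wheelChecking} deliberately avoids relying on a constructive version of \Cref{thm:forbiddingthewheel}. It only uses the approximation algorithm $\mathcal{A}$ of Dallard et al.\ for tree-independence number together with the purely existential statement of \Cref{thm:forbiddingthewheel}: set $k=f_{\ref{thm:forbiddingthewheel}}(d,\ell)+1$; if $\mathcal{A}$ returns a decomposition of bounded independence number, finish with \Cref{thm:inducedminorchecking}; otherwise $\alpha\text{-}\mathsf{tw}(G)>k$, and since deleting a vertex decreases the tree-independence number by at most one, $\alpha\text{-}\mathsf{tw}(G-v)\geq k>f_{\ref{thm:forbiddingthewheel}}(d,\ell)$ for an arbitrary $v$, so \Cref{thm:forbiddingthewheel} guarantees that $G-v$ still contains $W_\ell$ as an induced minor and one may safely recurse on $G-v$. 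This peeling argument is the whole proof; no step of \Cref{sec:duality}, \Cref{sec:pathsandcyclesinbrambles} or \Cref{sec:excluding} ever has to be executed.

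Your route instead hinges entirely on the constructive version of \Cref{thm:forbiddingthewheel}, which the paper asserts only in a remark and never proves, and your sketch of that constructivization has concrete gaps. First, the strong bramble produced by \Cref{lemma:bramble_from_linked_set} is $\{V(C_S)\colon \alpha(S)\leq k-1\}$, indexed by all (possibly exponentially many) vertex sets of bounded independence number; it is not ``explicitly constructed,'' and the greedy procedures of \Cref{lemma:dominatingpath,lemma:dominatingcycle} need, at each step, to decide whether $N[P^i]$ covers this implicit bramble and, if not, to exhibit a violating element --- neither task is obviously polynomial. Second, the refinement loop in \Cref{thm:welllinkedduality} requires, for a given set $X$, either an $\alpha$-balanced separator $S$ with $\alpha(S)\leq k$ or a certificate that none exists; since $\alpha(S)\leq k$ does not bound $|S|$, this is not a brute-force enumeration and no algorithm is provided. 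Third, your resolution of the final contradiction is circular: you say the sink bag ``contradicts the $\alpha$-order of $\mathcal{B}$ we already witnessed,'' but the $\alpha$-order is a minimum over all covers and is never algorithmically certified anywhere in the construction --- it is only guaranteed existentially. Each of these points may well be repairable (the authors clearly believe so), but as written they are missing ideas rather than routine verifications, and the paper's vertex-deletion argument is precisely the device that makes them unnecessary.
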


\begin{proof}[Proof sketch]
We show that it is possible to reduce the problem to the \textsc{$H$-Induced Minor Containment} problem in graphs of bounded tree-independence number.
Then the theorem will follow from \Cref{thm:inducedminorchecking}.

Let $\mathcal{A}$ be the approximation algorithm for tree-independence number of Dallard et al.\@ \cite{dallard2024computingtreedecompositionssmall}.
Given a graph $G$ and an integer $k$, $\mathcal{A}$ determines either that \textsl{(i)} $\alpha\text{-}\mathsf{tw}(G)>k$ or \textsl{(ii)} finds a tree-decomposition of independence number in $\mathcal{O}(k^3)$ in time $|V(G)|^{\mathcal{O}(k^3)}$.
Let $k\coloneqq f_{\ref{thm:forbiddingthewheel}}(d,\ell ) + 1$, where $ f_{\ref{thm:forbiddingthewheel}}$ is the function given by \Cref{thm:forbiddingthewheel}.

Now, if $\mathcal{A}$ returns the tree-decomposition from outcome \textsl{(ii)}, then using the algorithm from \Cref{thm:inducedminorchecking} yields the desired result.
Hence, we may assume that $\mathcal{A}$ determines that $\alpha\text{-}\mathsf{tw}(G)>k$.
Now let $v\in V(G)$ be an arbitrary vertex.
It follows that $\alpha\text{-}\mathsf{tw}(G-v)\geq k$.
Therefore, by our choice of $k$ and \Cref{thm:forbiddingthewheel}, we know that $G-v$ must contain $W_{\ell}$ as an induced minor and we may iterate the process with $G-v$ instead of $G$.
Clearly after at most $|V(G)|$ steps, $\mathcal{A}$ must return a tree-decomposition of bounded independence number.
This completes the proof.
\end{proof}

\section{Conclusion}

In this section, we discuss several open problems regarding $\alpha$-treedepth.
In the first part of \Cref{thm:alphatreedepth}, we showed that having a long path is one reason for having a large $\alpha$-treedepth.
Then, what could be another obstruction for the $\alpha$-treedepth?
It is easy to see that having a large complete bipartite graph is another reason.

\begin{theorem}\label{thm:KttandTreedepth}
    For every $d\in \mathbb{N}$, we have $\atd(K_{d,d})=d$.
\end{theorem}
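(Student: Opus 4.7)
The plan is to verify the two inequalities $\atd(K_{d,d})\le d$ and $\atd(K_{d,d})\ge d$ separately. Both proofs are short, and the main ingredient is a standard observation about elimination trees of connected graphs.

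For the upper bound, I would exhibit an explicit elimination forest of $\alpha$-depth at most $d$. Fix any linear ordering $v_1,\dots,v_{2d}$ of $V(K_{d,d})$ and let $F$ be the rooted path with root $v_1$ in which $v_i$ is the parent of $v_{i+1}$ for each $i\in[2d-1]$. For every edge $uv\in E(K_{d,d})$, the endpoint appearing earlier in the ordering is an ancestor of the other in $F$, so both endpoints lie on the unique root-to-leaf path of $F$; hence $F$ is a valid elimination forest. That path covers all of $V(K_{d,d})$, so its contribution to the $\alpha$-depth is exactly $\alpha(K_{d,d})=d$.

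For the lower bound, let $F$ be an arbitrary elimination forest of $K_{d,d}$. Since $K_{d,d}$ is connected, $F$ must consist of a single rooted tree $(F,r)$. Denote the two sides of $K_{d,d}$ by $A$ and $B$ and pick any leaf $\ell$ of $F$; the two cases $\ell\in A$ and $\ell\in B$ are symmetric, so assume $\ell\in B$. For each $a\in A$, the edge $a\ell$ must be contained in some root-to-leaf path of $F$, which forces $a$ and $\ell$ to be comparable under the ancestor relation in $(F,r)$. Since $\ell$ is a leaf it has no descendants, so $a$ must be an ancestor of $\ell$. Consequently $A$ is contained in the vertex set of the unique root-to-$\ell$ path $P_\ell$, which yields $\alpha(V(P_\ell))\ge \alpha(K_{d,d}[A])=d$, so $F$ has $\alpha$-depth at least $d$.

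Combining the two bounds gives $\atd(K_{d,d})=d$. I do not anticipate any serious obstacle: the only minor subtlety is recognising that in a connected graph every elimination forest must in fact be a single rooted tree, and that any edge incident to a leaf of that tree forces its other endpoint to be an ancestor of the leaf.
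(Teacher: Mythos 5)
Your proof is correct and follows essentially the same approach as the paper: the lower bound comes from observing that all $d$ neighbours of any leaf must be ancestors of that leaf and hence lie on its root-to-leaf path, and the upper bound from exhibiting an explicit elimination tree of $\alpha$-depth $d$ (your Hamiltonian path versus the paper's path-plus-pendant-leaves is an immaterial difference, since both root-to-leaf paths have independence number exactly $d$).
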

\begin{proof}
    Let $(T,r)$ be an optimal elimination tree for a graph $K_{d,d}$ witnessing $\atd(K_{d,d})$, and let $v$ be a leaf of $T$.
    Then, the root-to-leaf path containing $v$ must also contain the neighbours of $v$, which is an independence set of size $d$.
    This gives $\atd(K_{d,d})\geq d$.
    
    On the other hand, an elimination forest of $K_{d,d}$ with $\alpha$-depth $d$ can be constructed as follows:
    Form a path using the vertices in one side of the bipartition, then attach the vertices in the other side as a leaf to the last vertex of the path.
    Then, as each root-to-leaf path induces a $K_{1,d}$, so it has independence number $d$.
\end{proof}

In light of \cref{thm:KttandTreedepth}, it is clear that one would need to exclude some $K_{t,t}$ in addition to a $P_k$ to obtain a hereditary graph class with bounded $\alpha$-treedepth.
However, even this is not enough, as was recently discovered by Bešter Štorgel, Dallard, Lozin, Milanič, and Zamaraev~\cite{BDLMV25+}.
This motivates the following natural follow-up question.

\begin{question}
Is there a finite collection $\mathcal{F}$ of infinite families $\mathsf{F}=\langle F_i\rangle_{i\in\mathbb{N}}$ of graphs such that a hereditary (or induced-minor-closed) graph class $\mathcal{C}$ has bounded $\alpha$-treedepth if and only if it excludes $F_i$ for some $i\in\mathbb{N}$ for every $\mathsf{F}\in\mathcal{F}$?
\end{question}



From another perspective, one could also expect a wide range of applications of $\alpha$-treedepth in the algorithmic side.
As a first step, one may ask whether computing the $\alpha$-treedepth is fixed-parameter tractable, but this is unlikely, since this parameter deals with the independence number. 
Hence, a better question would as whether there is a good approximation algorithm finding $\alpha$-treedepth for general graphs.

\begin{question}\label{question:approximatealphatreedepth}
    Is there an integer $p$ such that there is a polynomial time algorithm that either construct an elimination forest for $G$ of $\alpha$-depth at most $pk$, or concludes that $\atd(G)>k$?
\end{question}

One important obstacle for computing $\alpha$-treedepth is that there is no recursive formula as in the case of treedepth.
For treedepth, we measure the size of each root-to-leaf paths.
The size of a set decreases exactly one whenever we remove an element in it, which is not the case for the independence number.

Another algorithmic usage of treedepth is using it as a parameter for parameterized algorithms.
There are problems known to admit much faster running times when parameterized by treedepth over treewidth (see~\cite{BelmonteKLMO2022Grundy} for an example of a problem where the complexity changes from $\mathsf{W}[1]$-hardness and containment in $\mathsf{XP}$ to $\mathsf{FPT}$).
This motivates the following.

\begin{question}\label{question:parametrizedbyalphatreedepth}
Are there problems that are easier on graphs with bounded $\alpha$-treedepth than on graphs with bounded tree-independence number?
\end{question}

\bibliographystyle{splncs04}
\bibliography{literature}

\begin{thebibliography}{10}
\providecommand{\url}[1]{\texttt{#1}}
\providecommand{\urlprefix}{URL }
\providecommand{\doi}[1]{https://doi.org/#1}

\bibitem{IsoldeThesis}
Adler, I.: Width functions for hypertree decompositions. Freiburg im Breisgau: Univ. Freiburg, Fakult{\"a}t f{\"u}r Mathematik und Physik (Dissertation) (2006), \url{d-nb.info/979896851}

\bibitem{ahn2024coarseerdhosposatheorem}
Ahn, J., Gollin, J.P., Huynh, T., Kwon, O.: {A coarse Erd{\H{o}}s-P{\'{o}}sa theorem}. In: Azar, Y., Panigrahi, D. (eds.) Proceedings of the 2025 Annual {ACM-SIAM} Symposium on Discrete Algorithms, {SODA} 2025, New Orleans, LA, USA, January 12-15, 2025. pp. 3363--3381. {SIAM} (2025), \url{https://doi.org/10.1137/1.9781611978322.109}

\bibitem{alecu2023induced}
Alecu, B., Chudnovsky, M., Hajebi, S., Spirkl, S.: Induced subgraphs and tree decompositions {XIII. B}asic obstructions in $\mathcal{H}$-free graphs for finite $\mathcal{H}$  (2023), \url{https://arxiv.org/abs/2311.05066}

\bibitem{BelmonteKLMO2022Grundy}
Belmonte, R., Kim, E.J., Lampis, M., Mitsou, V., Otachi, Y.: Grundy distinguishes treewidth from pathwidth. SIAM J. Discrete Math.  \textbf{36}(3),  1761--1787 (2022), \url{https://doi.org/10.1137/20M1385779}

\bibitem{BDLMV25+}
Bešter~Štorgel, K., Dallard, C., Lozin, V., Milanič, M., Zamaraev, V.: Awesome graph parameters. Manuscript in preparation.

\bibitem{Blasiok2019Induced}
B{\l}asiok, J., Kami\'nski, M., Raymond, J.F., Trunck, T.: Induced minors and well-quasi-ordering. J. Combin. Theory Ser. B  \textbf{134},  110--142 (2019), \url{https://doi.org/10.1016/j.jctb.2018.05.005}

\bibitem{BonnetTwidth1}
Bonnet, E., Kim, E.J., Thomass\'{e}, S., Watrigant, R.: Twin-width {I}: Tractable {FO} model checking. J. ACM  \textbf{69}(1) (2021), \url{https://doi.org/10.1145/3486655}

\bibitem{Bousquet2025Induced}
Bousquet, N., Dallard, C., Dumas, M., Hilaire, C., Milanič, M., Perez, A., Trotignon, N.: {Induced Minor Models. I. Structural Properties and Algorithmic Consequences}  (2025), \url{https://arxiv.org/abs/2402.08332}

\bibitem{chudnovsky2025treeindependencenumberv}
Chudnovsky, M., Codsi, J., Lokshtanov, D., Milanič, M., Sivashankar, V.: Tree independence number {V. W}alls and claws (2025), \url{https://arxiv.org/abs/2501.14658}

\bibitem{chudnovsky2024tree}
Chudnovsky, M., Hajebi, S., Trotignon, N.: Tree independence number {III}. {T}hetas, prisms and stars  (2024), \url{https://arxiv.org/abs/2406.13053}

\bibitem{Chuzhoy2021Towards}
Chuzhoy, J., Tan, Z.: Towards tight(er) bounds for the excluded grid theorem. J. Combin. Theory Ser. B  \textbf{146},  219--265 (2021), \url{https://doi.org/10.1016/j.jctb.2020.09.010}

\bibitem{COURCELLE199012}
Courcelle, B.: The monadic second-order logic of graphs. {I}. {R}ecognizable sets of finite graphs. Information and Computation  \textbf{85}(1),  12--75 (1990), \url{https://doi.org/10.1016/0890-5401(90)90043-H}

\bibitem{courcelle2012graph}
Courcelle, B., Engelfriet, J.: Graph structure and monadic second-order logic: a language-theoretic approach, vol.~138. Cambridge University Press (2012)

\bibitem{COURCELLE200077}
Courcelle, B., Olariu, S.: Upper bounds to the clique width of graphs. Discrete Applied Mathematics  \textbf{101}(1),  77--114 (2000), \url{https://doi.org/10.1016/S0166-218X(99)00184-5}

\bibitem{dallard2024treewidth}
Dallard, C., Milani{\v{c}}, M., {\v{S}}torgel, K.: Treewidth versus clique number. {III}. {T}ree-independence number of graphs with a forbidden structure. Journal of Combinatorial Theory, Series B  \textbf{167},  338--391 (2024), \url{https://doi.org/10.1016/j.jctb.2024.03.005}

\bibitem{dallard2021treewidthversuscliquenumber1}
Dallard, C., Milani\v{c}, M., \v{S}torgel, K.: Treewidth versus clique number. {I}. {G}raph classes with a forbidden structure. SIAM Journal on Discrete Mathematics  \textbf{35}(4),  2618--2646 (2021), \url{https://doi.org/10.1137/20M1352119}

\bibitem{dallard2024treewidthversuscliquenumber2}
Dallard, C., Milani\v{c}, M., \v{S}torgel, K.: Treewidth versus clique number. {II}. {T}ree-independence number. Journal of Combinatorial Theory, Series B  \textbf{164},  404--442 (2024), \url{https://doi.org/10.1016/j.jctb.2023.10.006}

\bibitem{dallard2024computingtreedecompositionssmall}
Dallard, C., Fomin, F.V., Golovach, P.A., Korhonen, T., Milanič, M.: Computing tree decompositions with small independence number. In: Bringmann, K., Grohe, M., Puppis, G., Svensson, O. (eds.) 51st International Colloquium on Automata, Languages, and Programming, {ICALP} 2024, July 8-12, 2024, Tallinn, Estonia. LIPIcs, vol.~297, pp. 51:1--51:18. Schloss Dagstuhl - Leibniz-Zentrum f{\"{u}}r Informatik (2024), \url{https://doi.org/10.4230/LIPICS.ICALP.2024.51}

\bibitem{dallard2024treewidthversuscliquenumber4}
Dallard, C., Krnc, M., Kwon, O., Milanič, M., Munaro, A., Štorgel, K., Wiederrecht, S.: Treewidth versus clique number. {IV}. {T}ree-independence number of graphs excluding an induced star (2024), \url{https://arxiv.org/abs/2402.11222}

\bibitem{diestel2016graph}
Diestel, R.: Graph theory, Graduate Texts in Mathematics, vol.~173. Springer, Berlin, sixth edn. (2025)

\bibitem{Fellows1995Complexity}
Fellows, M.R., Kratochv\'il, J., Middendorf, M., Pfeiffer, F.: The complexity of induced minors and related problems. Algorithmica  \textbf{13}(3),  266--282 (1995), \url{https://doi.org/10.1007/BF01190507}

\bibitem{MR951359}
Gy\'arf\'as, A.: Problems from the world surrounding perfect graphs. Zastos. Mat.  \textbf{19}(3-4),  413--441 (1987), \url{https://doi.org/10.4064/am-19-3-4-413-441}

\bibitem{hilaire2024treewidth}
Hilaire, C., Milani{\v{c}}, M., Trotignon, N., Vasi{\'c}, {\DJ}.: Treewidth versus clique number: induced minors  (2024), \url{https://arxiv.org/abs/2410.17979}

\bibitem{Korhonen2023InducedGrid}
Korhonen, T.: Grid induced minor theorem for graphs of small degree. J. Combin. Theory Ser. B  \textbf{160},  206--214 (2023), \url{https://doi.org/10.1016/j.jctb.2023.01.002}

\bibitem{Korhonen2024Induced}
Korhonen, T., Lokshtanov, D.: Induced-minor-free graphs: Separator theorem, subexponential algorithms, and improved hardness of recognition. In: Woodruff, D.P. (ed.) Proceedings of the 2024 {ACM-SIAM} Symposium on Discrete Algorithms, {SODA} 2024, Alexandria, VA, USA, January 7-10, 2024. pp. 5249--5275. {SIAM} (2024), \url{https://doi.org/10.1137/1.9781611977912.188}

\bibitem{Lima2024Tree}
Lima, P.T., Milani{\v{c}}, M., Mur{\v{s}}i{\v{c}}, P., Okrasa, K., Rz{\k{a}}{\.z}ewski, P., {\v{S}}torgel, K.: Tree decompositions meet induced matchings: Beyond max weight independent set. In: Chan, T.M., Fischer, J., Iacono, J., Herman, G. (eds.) 32nd Annual European Symposium on Algorithms, {ESA} 2024, September 2-4, 2024, Royal Holloway, London, United Kingdom. LIPIcs, vol.~308, pp. 85:1--85:17. Schloss Dagstuhl - Leibniz-Zentrum f{\"{u}}r Informatik (2024), \url{https://doi.org/10.4230/LIPICS.ESA.2024.85}

\bibitem{Matousek1988Polynomial}
Matou{\v{s}}ek, J., Ne{\v{s}}et{\v{r}}il, J., Thomas, R.: On polynomial time decidability of induced-minor-closed classes. Comment. Math. Univ. Carolin.  \textbf{29}(4),  703--710 (1988), \url{http://eudml.org/doc/17683}

\bibitem{NesetrilOdM2006Treedepth}
Ne{\u s}et{\u r}il, J., Ossona~de Mendez, P.: Tree-depth, subgraph coloring and homomorphism bounds. European J. Combin.  \textbf{27}(6),  1022--1041 (2006), \url{https://doi.org/10.1016/j.ejc.2005.01.010}

\bibitem{OUM2006514}
Oum, S., Seymour, P.: Approximating clique-width and branch-width. Journal of Combinatorial Theory, Series B  \textbf{96}(4),  514--528 (2006), \url{https://doi.org/10.1016/j.jctb.2005.10.006}

\bibitem{Ramsey1929Problem}
Ramsey, F.P.: On a {P}roblem of {F}ormal {L}ogic. Proc. London Math. Soc. (2)  \textbf{30}(4),  264--286 (1929), \url{https://doi.org/10.1112/plms/s2-30.1.264}

\bibitem{Reed2012Polynomial}
Reed, B., Wood, D.: Polynomial treewidth forces a large grid-like-minor. European J. Combin.  \textbf{33}(3),  374--379 (2012), \url{https://doi.org/10.1016/j.ejc.2011.09.004}

\bibitem{GraphMinor5}
Robertson, N., Seymour, P.: Graph minors. {V}. {E}xcluding a planar graph. J. Combin. Theory Ser. B  \textbf{41},  92--114 (1986), \url{https://doi.org/10.1016/0095-8956(86)90030-4}

\bibitem{GraphMinor4}
Robertson, N., Seymour, P.: Graph minors. {IV}. {T}ree-width and well-quasi-ordering. Journal of Combinatorial Theory, Series B  \textbf{48}(2),  227--254 (1990), \url{https://doi.org/10.1016/0095-8956(90)90120-O}

\bibitem{Robertson1994Quickly}
Robertson, N., Seymour, P., Thomas, R.: Quickly excluding a planar graph. J. Combin. Theory Ser. B  \textbf{62}(2),  323--348 (1994), \url{https://doi.org/10.1006/jctb.1994.1073}

\bibitem{Seymour2016}
Seymour, P.: Tree-chromatic number. J. Combin. Theory Ser. B  \textbf{116},  229--237 (2016), \url{https://doi.org/10.1016/j.jctb.2015.08.002}

\bibitem{Yan1996Quasithreshold}
Yan, J.H., Chen, J.J., Chang, G.J.: Quasi-threshold graphs. Discrete Appl. Math.  \textbf{69}(3),  247--255 (1996), \url{https://doi.org/10.1016/0166-218X(96)00094-7}

\bibitem{YolovMinorMatchingHypertreeWidth}
Yolov, N.: Minor-matching hypertree width. In: Proceedings of the Twenty-Ninth Annual ACM-SIAM Symposium on Discrete Algorithms. pp. 219--233. SIAM (2018), \url{https://doi.org/10.1137/1.9781611975031.16}

\end{thebibliography}

\end{document}